\documentclass[a4paper,11pt]{article}

 \usepackage[normalem]{ulem}
\usepackage[english]{babel}
\usepackage{amsmath}
  \usepackage{amsthm}
  \usepackage{bbm}
 \usepackage{amssymb}
  \usepackage{fullpage}
 \usepackage{graphicx}
 \usepackage{color}
 \usepackage{epsfig} 
 \usepackage[only,llbracket,rrbracket]{stmaryrd}
 \usepackage{enumerate}

\setcounter{secnumdepth}{3}


\newtheorem{thm}{Theorem}[section]
\newtheorem{deft}{Definition}[section]

\newtheorem{prop}{Proposition}[section]
\newtheorem{lem}{Lemma}[section]
\newtheorem{coro}{Corollary}[section]

\numberwithin{equation}{section}

{\vskip 0.5cm}

\newtheorem{rque}{\textbf{Remark}}[section]{\vskip 0.5cm} 
{\vskip 0.5cm} 


\let\ds\displaystyle

\newcommand{\dds}{{\frac d {ds}}}
\newcommand{\T}{{\mathbb T}}
\newcommand{\C}{{\mathbb C}}
\newcommand{\Z}{{\mathbb Z}}
\newcommand{\R}{{\mathbb R}}
\newcommand{\N}{{\mathbb N}}
\newcommand{\ep}{\varepsilon}
\newcommand{\eps}{\varepsilon}
\renewcommand{\epsilon}{\varepsilon}

\newcommand{\pa}{\partial}

\DeclareMathOperator{\re}{Re}
\DeclareMathOperator{\ima}{Im}

\newcommand{\QQ}{{\mathcal Q}}
\newcommand{\EE}{{\mathcal E}}
\newcommand{\OO}{{\mathcal O}}
\newcommand{\UU}{{\mathcal U}}
\newcommand{\LL}{{\mathcal L}}

\newcommand{\KK}{{\mathcal K}}
\newcommand{\II}{{\mathcal I}}

\newcommand{\Black}{\color{black}}


\title{Stability issues in the quasineutral limit \\of the one-dimensional Vlasov-Poisson equation}
\author{Daniel Han-Kwan\footnote{CNRS and \'Ecole Polytechnique, Centre de
Math\'ematiques Laurent Schwartz UMR7640,  F91128 Palaiseau cedex, email : { \tt
daniel.han-kwan@math.polytechnique.fr}} \, and 
\, Maxime
Hauray\footnote{Universit\'e d'Aix-Marseille, CNRS 
et \'Ecole Centrale Marseille,  LATP,  F13453 Marseille Cedex, email : { \tt maxime.hauray@univ-amu.fr}}}
\date{}
\begin{document}
 \maketitle

\begin{abstract}
This work is concerned with the quasineutral limit of the one-dimensional Vlasov-Poisson equation, for initial data close to stationary homogeneous profiles. Our 
objective is threefold: first, we provide a proof of the fact that the formal 
limit does not hold for homogeneous profiles that satisfy the Penrose 
\emph{instability} criterion. Second, we prove on the other hand that the limit 
is true for homogeneous profiles that satisfy some monotonicity condition, together with a symmetry condition. We handle the case of well-prepared as well as ill-prepared data. Last, we study a stationary boundary-value problem for the formal limit,
the so-called quasineutral Vlasov equation. We show 
the existence of numerous stationary states, with a lot of freedom in the 
construction (compared to that of BGK waves for Vlasov-Poisson): this illustrates the degeneracy of the limit equation.
\end{abstract}

\tableofcontents

\section{Introduction}

\paragraph{The one-dimensional Vlasov-Poisson equation and its quasineutral limit.}

We study the 
dynamics of electrons in a plasma, in the presence of ions that are assumed to 
be immobile 
and uniformly distributed in space. We assume that this dynamics is described by the Vlasov-Poisson equation.
We introduce a positive parameter $\eps$, defined as the ratio of the 
so-called \emph{Debye length} of the plasma to the typical size of the domain.
Loosely speaking, the Debye length is the typical length of electrostatic 
interaction and in most physical situations, the ratio $\ep$ is  
small:  $\eps\ll1$. For a physically oriented discussion on the 
points mentioned above (and below), we refer to \cite{HK}.

Throughout this paper, we will focus on the one-dimensional and periodic (in space) case. In this framework, we define $f_\eps(t,x,v)$ the 
distribution function of the electrons, for $t \in \R^+$, $x \in \T:= \R/\Z$ and $v \in \R$, so that $f_\eps(t,x,v) \, dv \, dx$ can be interpreted as the probability of finding electrons at time $t$ with position close to $x$ and velocity close to $v$.  We also introduce the electric potential $V_\eps(t,x)$ and the associated electric field $-\pa_x V_\eps(t,x)$.
After nondimensionalization (see \cite{HK}), the rescaled 1D 
Vlasov-Poisson equation reads
\begin{equation}
\label{quasi}
\left\{
    \begin{array}{l}
\ds   \partial_t f_\epsilon + v \, \partial_x f_\epsilon   -\partial_x V_\epsilon \, \partial_v f_\epsilon =0,  \\
\ds   -\epsilon^2\, \partial_x^2 V_\epsilon = \int f_\epsilon dv -1, 
  \end{array}
  \right.
\end{equation}
with
 an initial condition $f_{0,\ep}  \in L^1(\T\times \R)$ such that $f_{0,\ep}\geq 
0$,  $\int 
f_{0,\ep} dvdx =1$.

The energy associated to the system~\eqref{quasi} is the following 
functional
\begin{align} \label{energy}
\EE_\epsilon[f_\ep] &:= \frac12 \int f_\epsilon \vert v \vert^2 dv dx  + \frac{\epsilon^2}{2} \int \vert \partial_x V_\ep[f_\eps] \vert^2 dx \\
& = \frac12 \int f_\epsilon \vert v \vert^2 dv dx + \frac12
\int  V_\ep[f_\eps]  \, \rho_\ep  \,dx \nonumber.
\end{align}

We have used here the notation $V_\eps[f_\eps]$ in order to emphasize that the 
potential $V_\eps$ depends on the distribution function $f_\eps$. We will often 
forget to mention explicitly this dependance in the sequel, in order to 
lighten the notations.

In the following, we consider (most of the time) global strong 
solutions to the system~\eqref{quasi},
with bounded initial energy $\EE_\ep[f_{0,\ep}]$.  
This entails that
the energy $\EE_\ep(t)= \EE_\ep[f_\ep(t)]$ 
is preserved, as so is $\int Q(f_\ep) \,dxdv$, for any continuous function 
$Q$ (and for any $\ep$).
Remark that in this 
one-dimensional case, the strongness assumption is not a huge restriction since 
there is a weak-strong stability principle for solutions whose density $\rho_\ep$ 
 remains bounded in $L^\infty$ for all times~\cite[Theorem 1.9]{HauX}. 
This last property can be ensured for instance if $f_{0,\ep}$ is bounded from above by a profile 
$g_0(|v|)$ that is decreasing, bounded, and integrable. 
\bigskip

In this paper, we shall study the behavior of solutions to 
\eqref{quasi} as $\eps\to0$, a limit that we shall refer to as the 
\emph{quasineutral limit}.

\paragraph{The formal limit : the quasineutral Vlasov equation.} We 
begin with a brief formal analysis of the limit $\epsilon \rightarrow 0$. Let 
us assume that in some sense, we have $f_\epsilon \rightarrow 
f$ and $V_\epsilon  \rightarrow V$. 
The formal limit is straightforward : only the Poisson equation is affected and degenerates into $\rho:= \int f\, dv =1$. The limit system then reads
\begin{equation}
\label{kineuler-intro}
\left\{
    \begin{array}{ll}
  \partial_t f + v \, \partial_x f - \partial_x V \,\partial_v f =0,  \\
\int f dv =1,\\
    \end{array}
  \right.
\end{equation}
which we shall refer to as the \emph{quasineutral Vlasov equation}.

We observe that the total energy associated to this system corresponds only to the kinetic 
part of \eqref{energy}
 \begin{equation}
\label{energy2}
\mathcal{E}[f]= \frac{1}{2} \int f \, \vert v \vert^2 dv dx.
\end{equation} 
 
 The unknown potential $V$  can be seen as a Lagrange multiplier, or a pressure, 
associated to  the ``incompressibility'' constraint $\rho=1$.  
But an explicit equation for $V$ is ``hidden'' in the equation.  
Indeed, if we integrate the transport equation~\eqref{kineuler-intro} with 
respect to $v$, we get the ``zero divergence'' constraint on the current $j$
\begin{equation}
 \partial_x  j =0, \qquad \text{where } \;j(t,x):= \int v f(t,x,v) dv,
\end{equation}
which implies that $j$ is only a function of time. 
Next, we use the equation on the local momentum, obtained after multiplication of 
equation~\eqref{kineuler-intro} by $v$ and integration in $v$
\begin{equation}
\partial_t j +  \partial_x \biggl(\int v^2 f dv  + V \biggr) =0.
\end{equation}
It implies that $\partial_t j$ which is only a function of time, is also a 
gradient in $x$. The only possibility is that $\partial_t j =  \partial_x 
\Bigl(\int v^2 f dv  + V \Bigr)=0$, so that $j$ should be constant in 
time and position (and \emph{a fortiori} at time $0$).

In particular, we get a kind of  ``pressure law'' : the potential $V$ is, up to a 
constant, the opposite of the local kinetic energy 
\begin{equation}
\label{pressure}
V =  - \int v^2 f dv.
\end{equation}

For this reason this  quasineutral  limit can somehow be seen as a 
kinetic version of the 
classical incompressible limit of fluid mechanics (we refer to  Gallagher 
\cite{Gal} for a review on this topic). To go even further into the analogy, 
System \eqref{kineuler-intro} and its higher dimensional dimension generalizations  can be interpreted as the kinetic version of the 
incompressible Euler system, as it has been pointed out by Brenier \cite{Br89}.  
One interesting feature of this system is that it still makes sense in one 
dimension, which is of course not the case for the incompressible Euler 
equation.  For a numerical analysis of the pressure law \eqref{pressure}, we refer 
to~\cite{Degond&all}, where it is used in an attempt to get an asymptotic 
preserving scheme in the quasineutral limit.

\bigskip

To the best of our knowledge, only little is known about the (local) 
well-posedness of the quasineutral Vlasov equation~\eqref{kineuler-intro}. 
The local in time existence of \emph{analytic} solutions is shown in~\cite[Theorem 1.1]{BFJJ}; the case under consideration here corresponds to 
$\beta = \sigma = \alpha =0$ in this reference.
Similar results~\cite{HK,JabNou,BarNou} have also been proved for a related system of equations, 
namely 
\begin{equation}
\label{VDB}
\left\{
    \begin{array}{ll}
  \partial_t f + v\, \partial_x f  - \partial_x V \partial_v f =0,  \\
V= \rho-1,\\
    \end{array}
  \right.
\end{equation}
which was called \emph{Vlasov-Dirac-Benney} by Bardos~\cite{Bardos}. 
The local in time existence in Sobolev spaces of monotonic solutions to~\eqref{VDB}
(precisely, solutions that are for any $x$, increasing and then decreasing in 
$v$) is shown in \cite{Besse,Bardos}. We also refer to the very recent work \cite{BB}. However, at least to our knowledge, such a result is not known for the quasineutral Vlasov equation~\eqref{kineuler-intro}. 
Remark also that~\cite{Bardos} contains 
an interesting argument, that suggests ill-posedness for non monotone (in the above sense) and non analytic initial conditions.

\bigskip
The problem of well-posedness of~\eqref{kineuler-intro} is not the main topic 
of this paper: we rather focus on the justification of the quasineutral limit 
(although the two questions are of course related). Does the asymptotics
\begin{equation}
\label{asymp}
f_\eps(t,x,v) \approx f(t,x,v),
\end{equation}
where $f_\eps$ satisfies~\eqref{quasi} and $f$ 
satisfies~\eqref{kineuler-intro}, holds when $\eps\to 0$ and $f_{0,\eps} \approx f_{|t=0}$?  In this paper, we will restrict ourselves to the particular case where  $f$ is an homogeneous 
equilibrium: $f(t,x,v) = \mu(v)$.  In other words,  we are in particular interested 
in the \emph{the question of stability (or instability) around homogeneous 
equilibria in the quasineutral limit}.
The general case seems much more difficult to handle, but the study of the quasineutral limit around such equilibria already gives an overview of the problems raised by this limit.

\paragraph{Unstable and stable homogeneous equilibria for Vlasov-Poisson.}

Before going on, we shall now recall some well-known facts about the stability or instability of  
homogeneous equilibria (homogeneous meaning that the profile only depends on $v$) for the classical Vlasov-Poisson equation
\begin{equation}
\label{eq:VP1D}
\left\{
    \begin{array}{l}
\ds   \partial_t f + v \, \partial_x f   -\partial_x V \, \partial_v f =0,  \\
\ds   -\, \partial_x^2 V = \int f dv -1.
  \end{array}
  \right.
\end{equation}
The question of the 
linear stability of such profiles is now quite well understood, and some 
important results about the non-linear stability were also proved in the last 
years, culminating in the proof of  nonlinear  Landau damping by 
Mouhot and Villani \cite{MV} (see also the very recent paper by Bedrossian, Masmoudi and Mouhot \cite{BMM}). In the sixties, O. Penrose gave in 
\cite{Pen} a famous criterion for the existence of unstable modes (or generalized eigenvalues) for the 
linearized Vlasov-Poisson equation around an homogeneous profile $\mu(v)$.  
It is related to the existence of instabilities of kinetic nature, often 
referred to as \emph{two stream instabilities} which appear for homogeneous 
distributions in $v$, with two or more maxima. 
In dimension one, it may be stated in the following way.
\begin{deft}
 \label{def:Pen-first}
We say that an homogeneous profile $\mu(v)$, such that $\int \mu \, dv=1$, satisfies the  Penrose 
instability criterion if $\mu$ has a local minimum point $\bar v$ such that 
the following inequality holds 
\begin{equation} \label{def:Pen}
\int_{\R} \frac{ \mu(v) - \mu(\bar v)} {(v-\bar v)^2} \, dv >   0.
\end{equation}
 If the local minimum is flat, i.e. is reached on an interval $[\bar v_1, \bar v_2]$, then~\eqref{def:Pen} has to be satisfied for all $\bar v \in [\bar v_1, \bar v_2]$.
\end{deft}
Taking the regularity and decrease at infinity aside, this criterion is a necessary and sufficient condition 
for the existence of unstable modes in the linearized equation around the homogeneous profile $\mu$ when the Vlasov-Poisson equation \eqref{eq:VP1D} is posed in the whole space $\R$ (for the position variable $x$).

When we are restricted to a torus $\T_M := \R/(M\Z)$ of size $M>0$, then it becomes 
more involved to give a necessary and sufficient condition. But there still exists a rather straightforward necessary criterion: the linearized VP equation~\eqref{eq:VP1D} around 
$\mu$ is unstable only if $\mu$ has a local minimum $\bar v$ such that
\begin{equation} \label{Peneven}
\int_{\R} \frac{ \mu(v) - \mu(\bar v)} {(v-\bar v)^2} \, dv >   \frac{4 
\pi^2}{M^2}.
\end{equation}
It is interesting to remark that the above necessary condition becomes actually 
sufficient if we restricts to profile that are symmetric around $\bar v$, i.e. 
$\mu(2\bar  v -v ) = \mu(v) $ for all $v \in \R$: see for instance \cite[Lemma 
2.1]{GS} for the case $\bar v =0$, $M=2\pi$. In fact, for 
these particular symmetric profiles (and under some smoothness assumptions), Guo 
and Strauss also gave a nonlinear instability result in \cite{GS}.

 We shall see later that the right criterion for our purposes (quasineutral limit in the torus $\T$) turns out to be \eqref{def:Pen} and we shall give more details on this fact in the section~\ref{sec:Penrose}. 

\bigskip

On the contrary, the Penrose criterion suggests that when $\mu$ has no local 
minimum, i.e. $\mu$ is increasing and then decreasing, then the profile may be 
stable  (see also \cite[Section 2.2]{MV}). In fact, it was proved by Marchioro and Pulvirenti 
in dimension one and 
two  \cite{MP86}, and by Batt and Rein in dimension three \cite{BR} that 
radially decreasing profiles are indeed non linearly stable. Their proof 
relies on rearrangement inequalities.

To our knowledge, the only result that proves nonlinear stability without any symmetry assumption
is the work of Mouhot and Villani on Landau damping: 
\cite[Theorem 2.2]{MV} implies some ``orbital'' stability,
but only for Gevrey  perturbations of a  Gevrey  homogeneous profile.

\paragraph{The mathematical difficulties of the quasineutral limit.}

The mathematical analysis is more subtle than what the formal analysis would suggest.
As already mentioned, we focus on the quasineutral limit around 
an unstable homogeneous profile $\mu(v)$, i.e. for initial data converging in some sense to $\mu(v)$.

\bigskip

The role of the stable or unstable nature of $\mu$ in the analysis of the limit was pointed out by Grenier in \cite{Gr99-1}.
Indeed, a major obstruction to the asymptotics arises when a profile $\mu$ satisfies the Penrose instability
criterion of Definition~\ref{def:Pen-first}.
One can remark that such a profile $\mu$ is a stationary solution of \eqref{quasi} and also 
\eqref{kineuler-intro},
with an electric field identically equal to zero, as it is the case for any distribution function depending 
only on $v$.
In the short proceeding note \cite{Gr99-1}, Grenier explains, without giving a proof, why for such profiles, the formal convergence to the expected 
system \eqref{kineuler-intro} is in general false. 
Basically, the idea is that thanks to some scaling invariance of 
the Vlasov-Poisson equation~\eqref{quasi}, the instabilities (whenever they exist)
develop on a very short timescale of order $\ep$. 
 
A consequence is that the linearized quasineutral or Vlasov-Dirac-Benney equations around an 
unstable equilibrium have a unbounded spectrum : it possesses eigenvalues with 
arbitrary high real part. See the analysis in \cite{Bardos,BarNou} for the 
Vlasov-Dirac-Benney case.

\bigskip

Even when we consider stable homogeneous profiles (with one and only one ``hump''), a 
second difficulty arises : it is due to the presence of time oscillations of frequency 
$\epsilon$ and amplitude $\OO(\epsilon^{-1})$ of the electric field, usually 
referred to as \emph{plasma oscillations} or \emph{Langmuir waves}
(see for instance~\cite{Gr96}). We will describe these more 
carefully in Section \ref{sec:heuristics}. Let us just emphasize here that these 
oscillations are not damped: they do carry 
a constant amount of energy, and the problem is very different from an initial 
boundary layer problem. Therefore, these oscillations have an impact on the formal limit.

\paragraph{State of the art.}

One of the first mathematical works on the quasineutral limit of the Vlasov-Poisson equation was performed by Brenier and Grenier in \cite{BG,Gr95}, using the defect measure approach, originally introduced in \cite{DM87} for the Euler equation. In these works, the limit of the two first moments in $v$ is studied. In the limit equations, in addition to the terms one could formally guess, two defect measures appear, which account for the lack of compactness,  together with time oscillations. 
Loosely speaking, it is explained that the defect measures are more or less related to the possible very fast instabilities, while the time oscillations are due to the fast Langmuir waves. 

\bigskip
In a subsequent work \cite{Gr96}, Grenier gives another description of what 
happens in the quasineutral limit, using a somewhat unusual point of view. He 
describes the plasma as a superposition of a (possibly uncountable) collection 
of fluids : the distribution function is written under the form
$$
f_\ep(t,x,v) = \int_M \rho^\ep_\theta(t,x) \delta_{v^\ep_\theta(t,x)}(v) \mu(d\theta),
$$
where $\theta$ is a parameter belonging to some probability space $M$ and $\mu$ is a probability measure on that space. This is quite general since any reasonable distribution $f$ may be written under this form (actually in a very large number of ways), and in particular it applies to the ``cold electrons'' case, i.e. when the sequence of initial distribution $f_{0,\ep}$ converges towards a monokinetic profile of the form
\begin{equation}
\label{mono}
f_0(x,v)=\rho_0(x) \delta_{v_0(x)}(v),
\end{equation}
where $\delta$ denotes as usual the Dirac measure. 
Under some strong uniform regularity assumption on the whole sequence of 
solutions, which ensures that the instability phenomena discussed before  
(such as two stream instabilities) are not present, he proved 
that if the fast and undamped plasma oscillations are ``filtered'',  the 
collection $(\rho_\theta^\ep,v_\theta^\ep)_{\theta \in M}$ converges (up to some extraction)  
to a solution of a multiphase incompressible Euler system. 
Moreover, he also shows that the strong regularity assumptions are fulfilled if 
 the sequence of initial conditions converges in some space of analytic 
functions.

\bigskip
The previous convergence result of Grenier was improved in two directions by Brenier 
\cite{Br00} and Masmoudi~\cite{Mas},  in the ``cold electrons'' case 
only. Brenier  introduced  
the ``modulated energy'' method (also called relative entropy method) and proved stability estimates which entail the 
convergence for ``well-prepared'' initial data towards a dissipative solution of the 
Euler equation, a very weak notion of solutions  introduced by Lions in \cite{Lio96}  which satisfy  a weak-strong uniqueness principle. In particular, Brenier's technique has the advantage to require weak regularity assumptions.
These well-prepared initial 
conditions correspond  exactly to those for which the Langmuir waves vanish in the 
limit. 

The stability estimates of Brenier are natural since monokinetic 
initial data of the form \eqref{mono} correspond to an ``extremal'' case 
 of symmetric and monotonic profiles. He also explained how the results obtained in 
\cite{BG,Gr95} on the existence of defect measures may lead to the same result. 
Later, Masmoudi extended in \cite{Mas} the convergence to non necessarily 
well-prepared initial data, but with stronger regularity assumptions on the 
limit equation. His work combines the filtration technique and the 
modulated energy method.

\section{Main results}
In this work, we provide three types of 
results, related to the quasineutral limit and stationary states.
\begin{itemize}
\item Let $\mu(v)$ be a profile satisfying the Penrose instability criterion of 
Definition~\ref{def:Pen-first}, and a technical condition (that essentially forbids  the presence of isolated zeros and fast oscillations on its tail, see Definitions~\ref{cond-pos}
 and~\ref{cond-pos'}). Our first result asserts that even if 
$f_{0,\ep} \rightarrow  \mu$ in $W^{s,1}_{x,v}$ for some $s \in \N$, as $\ep\to0$, the asymptotics
\begin{equation}
\label{asymptotics}
f_\eps(t,\cdot) \rightharpoonup \mu(\cdot), \quad \text{as }\; \ep \rightarrow 0
\end{equation}
where $f_\eps$ is the solution to \eqref{quasi} with initial datum $f_{0,\ep}$, 
is only true for $t=0$ and does not hold on an interval of time $[0,T]$, for any $T>0$,
 and any $W^{-r,1}_{x,v}$-norm, $r \in \N$. Actually, the results we prove are more accurate, see Theorem \ref{thmGrenier-revisited}.
In other words, we provide a complete proof of the result suggested by Grenier in his note \cite{Gr99-1} (actually for a larger class of homogeneous equilibria and in general topologies).
\item Conversely, we are able to justify the asymptotics \eqref{asymptotics}, on 
any interval of time $[0,T]$, as soon as $\mu(v)$ satisfy a monotonicity and a 
symmetry condition (along with some other minor technical conditions), see 
Theorems \ref{thm} and \ref{thm-IP}. 
 In some sense, it extends in dimension one the stability results of 
Brenier~\cite{Br00} and Masmoudi~\cite{Mas} to some cases of initial data which do not converge to a monokinetic profile. 
\item We show the existence of an {uncountably infinite} number of 
stationary solutions (or ``BGK waves'') to a boundary value problem associated 
to \eqref{kineuler-intro}, with a lot of freedom in the construction. This is due to the possible presence of trapped particles, whose density 
is shown to depend in a very simple way on the boundary conditions, see 
Theorem \ref{thm:1}.  We shall discuss the potential consequences on the stability properties of \eqref{kineuler-intro} below.
\end{itemize}

Let us now describe more precisely each of these results.

\subsection{Unstable case.}

Before stating or instability result, we need to introduce a technical condition:

\begin{deft}
\label{cond-pos}
 We say that a positive and  $C^1$ profile $\mu(v)$ satisfies the $\delta$-condition 
if
\begin{equation} \label{cond:alpha}
\sup_{v \in \R}  \,  \frac{|\mu'(v)|}{(1+ |v|)\mu(v)} < + \infty.
\end{equation}
\end{deft}
In the sequel, we shall also introduce a more general (but also more technical) condition that allows to handle some non-negative profile, see the $\delta'$-condition of Definition~\ref{cond-pos'} for details.

\bigskip

We are now in position to state  the ``instability'' result. As usual, the notation 
$W^{s,1}_{x,v}$ refers to the classical Sobolev spaces built on $L^1_{x,v}(\T 
\times \R)$ and of order $s$. In what follows, we say that a profile 
$\mu(v)$ is smooth if it belongs to $W^{s,1}_v$ for all $s\in \N$.
We shall use the following convention in the statement of the theorem: the notation $(f_\ep)$ (with the continuous parameter $\ep >0$) refers in fact to  a sequence $(\ep_k)_k$ going to $0$ and a sequence $(f_{\ep_k})_k$.
\begin{thm} 
\label{thmGrenier-revisited}
Let $\mu(v)$ be a smooth profile satisfying the Penrose instability criterion of Definition~\ref{def:Pen-first}.  Assume either that $\mu$ is positive and satifies the $\delta$-condition of Definition~\ref{cond-pos}, or that $\mu$ is non-negative and satisfies the $\delta'$-condition of Definition~\ref{cond-pos'}. For any $N>0$ and $s>0$, there exists a 
sequence of non-negative initial data $(f_{0,\eps})$ such that 
$$
\| f_{\eps,0}- \mu\|_{W^{s,1}_{x,v}} \leq \eps^N,
$$
and denoting by $(f_\eps)$ the sequence of solutions
to \eqref{quasi} with initial data $(f_{0,\eps})$, the following holds:

\begin{enumerate}[i)]
\item {\bf $L^1$ instability for the macroscopic observables:} the density $\rho_\ep := \int f_\eps \, dv$, and the electric field $E_\ep = - \pa_x V_\ep$. For all $\alpha\in [0,1)$, we have
\begin{equation}
\label{insta:macro}
\liminf_{\eps \rightarrow 0} \sup_{t \in [0,\eps^\alpha]} \left\| \rho_\eps(t) - 1 \right\|_{L^1_{x}} >0,
\qquad
\liminf_{\eps \rightarrow 0} \sup_{t \in [0,\eps^\alpha]} {\eps}\left\| E_\eps \right\|_{L^1_x} >0.
\end{equation}

\item {\bf Full instability for the distribution function:} for any $r \in \Z$, we have
\begin{equation}
\label{insta:full}
\liminf_{\eps \rightarrow 0} \sup_{t \in [0,\eps^\alpha]} \left\| f_\eps(t) - \mu \right\|_{W^{r,1}_{x,v}} >0.
\end{equation}
\end{enumerate}
\end{thm}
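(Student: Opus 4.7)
My plan is to exploit the scaling invariance of the Vlasov-Poisson system to reduce the $\eps\to 0$ asymptotics to a long-time nonlinear instability problem for the classical (unscaled) 1D Vlasov-Poisson equation on an increasingly large torus, as suggested in Grenier's note \cite{Gr99-1}. Setting $\tau=t/\eps$, $y=x/\eps$, and $g(\tau,y,v):=f_\eps(\eps\tau,\eps y,v)$, one checks that $g$ solves the standard VP system on $\T_{1/\eps}\times\R$. Proving \eqref{insta:macro}-\eqref{insta:full} on $[0,\eps^\alpha]$ then reduces to exhibiting, for classical VP on $\T_{1/\eps}$, a solution which, once undone the rescaling, deviates by an $O(1)$ amount from $\mu$ at rescaled times $\tau\leq\eps^{\alpha-1}\to+\infty$.

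\textbf{Unstable mode and nonlinear bootstrap.} The Penrose condition \eqref{def:Pen} for $\mu$ yields, via the dispersion relation $i\eta=\int_{\R} \mu'(v)/(\gamma+i\eta v)\,dv$, a wavenumber $\eta_0>0$ and a growth rate $\gamma_0>0$ admitting a corresponding eigenmode $e^{\gamma_0\tau+i\eta_0 y}\phi_0(v)$ of the linearization of VP around $\mu$; the $\delta$- or $\delta'$-condition is exactly what ensures that the relevant Cauchy integrals are well-defined and smooth in $(\gamma,\eta)$. For each small $\eps$, I pick $\eta_\eps=2\pi k_\eps\eps\to\eta_0$ admissible on $\T_{1/\eps}$ (with $k_\eps=\lfloor \eta_0/(2\pi\eps)\rfloor$); the associated $\gamma_\eps$ and smooth eigenfunction $\phi_\eps(v)$ converge to $\gamma_0,\phi_0$. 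Take as initial perturbation $h_\eps(y,v):=\delta_\eps\,\re(e^{i\eta_\eps y}\phi_\eps(v))$, with $\delta_\eps$ a well-chosen polynomial in $\eps$ so that, back in the original variables, $\|f_{0,\eps}-\mu\|_{W^{s,1}_{x,v}}\leq\eps^N$ (the rescaling introduces only polynomial factors in $\eps$). A Duhamel bootstrap of Guo-Strauss type then shows that up to an escape time $\tau_\eps^\star\sim \gamma_\eps^{-1}|\log\eps|$, the solution $g$ of VP on $\T_{1/\eps}$ starting from $\mu+h_\eps$ remains close to $\mu+\delta_\eps e^{\gamma_\eps\tau}\re(e^{i\eta_\eps y}\phi_\eps)$ modulo corrections of order $\OO((\delta_\eps e^{\gamma_\eps\tau})^2)$.

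\textbf{Conclusion and main obstacle.} In the original variables, the escape time is $t_\eps^\star=\eps\tau_\eps^\star=\OO(\eps|\log\eps|)=o(\eps^\alpha)$ for any $\alpha<1$, so the instability develops inside $[0,\eps^\alpha]$. Undoing the rescaling gives $\|\rho_\eps(t_\eps^\star)-1\|_{L^1(\T)}=\eps\|\rho_g(\tau_\eps^\star)-1\|_{L^1(\T_{1/\eps})}\gtrsim c_0$, once the nonlinear saturation has pushed $\rho_g-1$ to an $L^\infty$ amplitude of order $c_0$ on the torus of length $1/\eps$; the estimate on $\eps\|E_\eps\|_{L^1_x}$ follows directly from Poisson's equation. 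The $W^{r,1}_{x,v}$-instability for $r\geq 0$ is immediate from the $L^1$-instability. The case $r<0$ is more subtle because the principal unstable mode oscillates at frequency $\sim 1/\eps$ in $x$ and is invisible in weak norms; however, nonlinear self-interactions $e^{i\eta_\eps y}\cdot e^{-i\eta_\eps y}=1$ necessarily generate a zero-frequency (in $y$) component of $g-\mu$, which after pull-back is independent of $x$ and thus detectable in any $W^{r,1}_{x,v}$ norm. The principal technical hurdle is the nonlinear bootstrap itself over a time of length $\sim|\log\eps|$: it requires sharp resolvent estimates for the linearized operator $L_\mu$ on a contour separating the unstable spectrum from the rest, uniformly as $\T_{1/\eps}$ grows; the $\delta$- and $\delta'$-tail conditions on $\mu$ are tailored precisely to furnish such bounds and the smoothness of the Penrose map near $\eta_0$.
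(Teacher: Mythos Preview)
Your overall strategy---rescale to a standard VP problem, exhibit a nonlinear bootstrap from the Penrose-unstable eigenmode, then pull back---is correct and matches the paper. However, two points need correction.

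First, you miss a key simplification. The paper does \emph{not} work on the growing torus $\T_{1/\eps}$. Instead, it first uses Proposition~\ref{prop:Penlim} to fix once and for all an $M>0$ such that the linearized VP operator on $\T_M$ has an unstable eigenvalue. It then restricts to the sequence $\eps_k=1/(kM)$ and looks for solutions on $\T$ that are $\eps_k M$-periodic in $x$; after the rescaling $y=x/\eps$, these live on the \emph{fixed} torus $\T_M$, independent of $\eps$. The nonlinear instability (Theorem~\ref{NLinsta}) is therefore a single statement on a fixed domain, proved once via Grenier's high-order approximation $g^N_{app}=\mu+\sum_{k=1}^N\delta^k h_k$, with $N$ chosen so that $(N+1)\Re\lambda_1>\|\partial_v\mu\|_1+1$; no uniformity in $\eps$ is required anywhere. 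Your ``principal technical hurdle''---uniform resolvent estimates as $\T_{1/\eps}$ grows---simply disappears with this device.

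Second, you misidentify the role of the $\delta$- and $\delta'$-conditions. They have nothing to do with Cauchy integrals or resolvent bounds; the spectral analysis needs only $\mu\in W^{3,1}$ (Proposition~\ref{prop:Deg}). These conditions are introduced \emph{solely} to guarantee that the perturbed initial datum $\mu+\delta\,\Re h_1$ is non-negative: the eigenfunction has the explicit form $h_1(x,v)=e^{imx}\mu'(v)/(v+\xi)$, so $|h_1|\lesssim|\mu'(v)|/(1+|v|)$, and the $\delta$-condition~\eqref{cond:alpha} ensures $|h_1|\leq C\mu$, whence $\mu+\delta\,\Re h_1\geq 0$ for small $\delta$. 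The $\delta'$-condition is a weaker variant allowing $\mu$ to vanish, handled by a truncation argument (Step~7 of the proof). Your insight on the $W^{-r,1}$ instability for $r<0$---that the quadratic self-interaction $e^{i\eta y}\cdot e^{-i\eta y}$ produces a nontrivial zero-mode in $x$---is exactly right and is made precise in the paper by computing $\int_{\T_M}\tilde h_2(t,x,v)\,dx$ explicitly (Step~6).
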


For $t=0$, by construction, we have
$ \lim_{\epsilon\rightarrow 0} \Black \| \rho_{0,\epsilon}- 1  \|_{L^1_x}  
=  
\lim_{\epsilon\rightarrow 0} \| E_{0,\epsilon} \|_{L^1_x}
= 0$. This theorem can thus be rephrased as follows: there exist small smooth perturbations of 
$f_0$ for which the corresponding solutions of \eqref{quasi} do not converge to 
the expected stationary solution $\mu$ in a weak $W^{-r,1}$-sense for any $r \in \N$. 
\begin{rque} 
It is possible to lower down the required regularity on the profile $\mu$, 
see Remark \ref{rem:NLinsta}.
\end{rque}
\begin{rque} 
Keeping the notations of the theorem, we can actually show that for any $r \in \N$,
\begin{equation}
\label{insta:macro3}
\liminf_{\eps \rightarrow 0} \sup_{t \in [0,\eps^\alpha]} \frac{1}{\eps^r}\left\| \rho_\eps(t) - 1 \right\|_{W^{-r,1}_{x}} >0,
\end{equation}
Note however that the instability is not directly seen in the
$W^{-1,1}$-norm of the density.
This is due to the preservation of the  total energy that provides stability in weak norms on macroscopic observables:
$$
\bigl\| \rho_\ep(t) - 1 \bigr\|_{W^{-1,1}_x} \le C \ep^2 \| E_\ep \|_1 \le C 
\ep \, \sqrt{\EE_\ep[f_{0,\ep}]}  . 
$$
\end{rque}

The $\delta$-condition of Definition~\ref{cond-pos} 
and the $\delta'$-condition of Definition~\ref{cond-pos'} are 
 precisely introduced in order to force that the sequence of initial data we build in Theorem~\ref{thmGrenier-revisited} is non-negative, and thus to ensure their physical relevancy (and this their only purpose). We refer to the paper of Guo and Strauss \cite{GS98} where the same problem is faced.

 The $\delta$-condition~\eqref{cond:alpha} is quite general: it is satisfied by positive profiles that, for large velocities, do not oscillate too
much and decrease slower than some Maxwellian distribution.
For instance profiles
\begin{itemize}
\item which coincide with a power law for large velocities, i.e.
$$
\mu(v) = \frac{\lambda_1}{|v|^{\lambda_2}}, \quad \text{for } v \text{ large enough, with } \lambda_1,\lambda_2>0;
$$
\item which coincide with Maxwellian profiles for large velocities, i.e.
$$
\mu(v) = {\lambda_1}e^{-\lambda_2 |v|^2}, \quad \text{for } v \text{ large enough, with  } \lambda_1,\lambda_2>0.
$$
\end{itemize}
Nevertheless, profiles that vanish at some point never satisfy the $\delta$-condition. However the $\delta'$-condition of Definition~\ref{cond-pos'} allows to handle profiles that vanish (at infinite order only) or decrease faster than exponentially.  As it is not very explicit, we postpone its precise statement to the subsection~\ref{subsec:delta'} and detail some sufficient conditions in the following Proposition.
 
\begin{prop} \label{suff-delta}
The $\delta'$-condition of Definition~\ref{cond-pos'} is satisfied if one of the following holds true:
\begin{itemize}
\item[i.] the profile $\mu$ is positive and satisfy for $v$ large enough with $\alpha >1$ and $C_\alpha >0$
\begin{equation} \label{cond:2.1i}
\frac1{C_\alpha} \, |v|^\alpha \le \frac{\mu'(v)}{\mu(v)} \le C_\alpha \,  |v|^\alpha;
\end{equation}
 \item[ii.] the profile $\mu$ is $C^\infty$ on the whole line $\R$, positive on the union of 
a finite number of disjoints open interval $(a_i,b_i)$ and equal to zero outside, and for all $i$, there exists $\eps_1$ such that on $(a_i,a_i + \eps_i)$ (resp. on $(b_i-\eps_i,b_i)$), $\mu$ satisfies for some $C_i>0$ and $\beta_i>1$ (resp. $C_i'>0$ and $\beta_i'>1$)
\begin{equation} \label{cond:2.1ii}
\frac1{C_i} \, |v-a_i|^{-\beta_i} \le \frac{\mu'(v)}{\mu(v)} \le C_i \,  |v-a_i|^{-\beta_i},
\end{equation}
$$
\text{resp.} \quad \frac1{C_i'} \, |v-b_i|^{-\beta_i'} \le \frac{\mu'(v)}{\mu(v)} \le C_i' \,  |v-b_i|^{-\beta_i'}.
$$
The case where some $b_i$ is equal to some $a_j$ is allowed, but only with the additional assumption that the profil $\mu$ should vanish at any order (all derivative should vanish) at this point.
\end{itemize}
A mix of points $i.$ and $ii.$ is also allowed. On the other hand, the $\delta'$-condition of Definition~\ref{cond-pos'} is not satisfied if $\mu$ has a zero with finite order.
\end{prop}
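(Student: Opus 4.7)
The plan is to verify the $\delta'$-condition directly in each of the cases $i.$ and $ii.$ by exploiting the asymptotic ODE satisfied by $\mu'/\mu$, and then to rule out the borderline situation of a finite-order zero. In all cases the strategy is the same: away from the ``critical'' region (infinity in case $i.$, boundary of the support in case $ii.$), the profile $\mu$ is bounded below on compact sets and the $\delta'$-condition is an essentially local statement that follows from smoothness; in the critical region, integration of the hypothesis produces quantitative decay/vanishing estimates for $\mu$ that can be directly checked against whatever integrability or weighted-boundedness requirement the $\delta'$-condition encodes.

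In case $i.$, integrating the two-sided bound $\frac{1}{C_\alpha}|v|^\alpha \le \mu'(v)/\mu(v) \le C_\alpha |v|^\alpha$ over large $|v|$ yields asymptotics of the form $\exp\bigl(-C_\alpha|v|^{\alpha+1}/(\alpha+1)\bigr) \lesssim \mu(v) \lesssim \exp\bigl(-|v|^{\alpha+1}/(C_\alpha(\alpha+1))\bigr)$. This super-Gaussian decay (faster than any Maxwellian since $\alpha+1>2$) absorbs the polynomial growth of $\mu'/\mu$, which is exactly the extra flexibility that the $\delta'$-condition is designed to allow compared with the $\delta$-condition (where such fast decay would force $\mu'/\mu$ to blow up). In case $ii.$, a partition of unity subordinate to the finite family of intervals $(a_i,b_i)$ reduces the check to a neighborhood of each endpoint. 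Near $a_i$, integration of $\mu'/\mu \approx (v-a_i)^{-\beta_i}$ with $\beta_i>1$ gives $\mu(v) \approx \exp\bigl(-c(v-a_i)^{-(\beta_i-1)}/(\beta_i-1)\bigr)$, so $\mu$ and all its derivatives vanish at $a_i$; the same holds on the left of $b_i$. The $C^\infty$ assumption and the hypothesis that $\mu$ vanishes at infinite order at any merging point $b_i=a_j$ ensure that the local estimates coming from the left and from the right glue into a globally consistent bound. The mixed case is handled by combining the two partitions of unity.

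The main obstacle is pinpointing exactly what the $\delta'$-condition demands near a zero of $\mu$, which is also what drives the negative claim. If $\mu$ has a zero of order $n\geq 1$ at $v_0$, then $\mu(v)\sim c(v-v_0)^n$ and $\mu'(v)/\mu(v)\sim n/(v-v_0)$, corresponding to the borderline exponent $\beta=1$ rather than $\beta>1$. Since the $\delta'$-condition is built to accommodate profiles vanishing at infinite order (whose $\mu'/\mu$ blows up integrably fast in an appropriate sense), a simple-pole singularity in $\mu'/\mu$ produces a logarithmic divergence that cannot be reabsorbed, and one exhibits the failure by plugging the local ansatz $\mu(v)\sim c(v-v_0)^n$ into the defining inequality. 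Once the precise form of Definition~\ref{cond-pos'} is at hand, each of the verifications above reduces to a direct computation from the integrated ODE, and the negative statement is a one-line contradiction.
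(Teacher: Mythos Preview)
Your proposal does not actually engage with the statement of the $\delta'$-condition, and as a result the argument cannot be completed as written. The $\delta'$-condition is not an integrability statement about $\mu'/\mu$; it concerns the sets
\[
V_\delta = \Bigl\{v:\ \frac{|\mu'(v)|}{(1+|v|)\mu(v)} > \tfrac{1}{\delta}\Bigr\},\qquad
W_\delta = \bigl\{w:\ d(w,V_\delta)\le \sqrt{\delta}\bigr\},
\]
and asks that $\int_{W_\delta}|\mu'|\,dv = o(\delta^n)$ for every $n$. The decisive step, which your outline never reaches, is to use the \emph{upper} bound on $|\mu'|/\mu$ to localize $V_\delta$: in case $i.$ one gets $W_\delta\subset\{|v|\ge c\,\delta^{-1/(\alpha-1)}\}$, and in case $ii.$ one gets $W_\delta$ contained in intervals of length $\sim\delta^{1/\beta'}$ adjacent to each endpoint. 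On each such one-sided interval $\mu'$ has constant sign (this uses the \emph{lower} bound), so $\int_{W_\delta}|\mu'|$ equals $\mu$ evaluated at the boundary of $W_\delta$; only then does your integrated ODE estimate on $\mu$ become relevant, giving $e^{-c\,\delta^{-\gamma}}$ in case $i.$ and $o(\delta^n)$ in case $ii.$ via the infinite-order vanishing. Your ``partition of unity'' and ``integrability'' language suggests you are aiming at a different, and wrong, target.

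The negative claim is also misdiagnosed. A finite-order zero $\mu(v)\sim c(v-v_0)^m$ does give $\mu'/\mu\sim m/(v-v_0)$, but the obstruction is not a ``logarithmic divergence that cannot be reabsorbed''. Rather, $V_\delta$ then contains an interval $(v_0,v_0+c\,\delta)$, and
\[
\int_{W_\delta}|\mu'|\,dv \;\ge\; \int_{v_0}^{v_0+c\,\delta}|\mu'|\,dv \;=\; \mu(v_0+c\,\delta)\;\sim\; c\,\delta^{m},
\]
which is \emph{polynomial} in $\delta$ and therefore fails to be $o(\delta^n)$ once $n>m$. No divergence occurs; the point is that the decay is merely of finite order.
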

\Black

Proposition~\ref{suff-delta} will be proved after Theorem~\ref{thmGrenier-revisited}. It covers a large variety of profiles. Loosely speaking, we have:
\begin{itemize}
\item Point $i.$ is satisfied by positive profiles that do not oscillate for large velocities and decrease very fast: for instance, $\mu(v) \sim_{|v|\to +\infty} \exp(-|v|^\alpha)$, with $\alpha >2$. 
\item Point $ii.$ is satisfied by some smooth profiles with compact support: for instance \\
\mbox{$\mu(v)= \exp\bigl(-(b-v)^{-1}(v-a)^{-1} \bigr)$} on $(a,b)$ and $0$ outside.
\end{itemize}

However, while it is possible to handle zero of infinite order in $\mu$ 
, zeros of finite (and especially small) order may raise difficulties. As a matter of fact, our proof is not relevant if $\mu$ possesses a zero with small order.  We are not able to say if this is only a technical point, or a more important physical point. 

\subsection{Stable case.} We now restrict to particular homogeneous stable 
equilibria. 
The precise conditions we require are listed in the following definition.
\begin{deft}[$S$-stability] \label{def:S-stable}
We will say that a profile $\mu$  satisfying $\int_\R \mu(v) \,dv = 1$ 
 is $S$-stable if  the four following conditions are fulfilled:
\begin{itemize}
 \item[i)] \emph{Continuity:} $\mu$ is continuous on $\R$.
  \item[ii)] \emph{Finite energy:}  $ \int \mu(v) v^2 \,dv < + \infty$.
   \item[iii)] \emph{Monotonocity:} There is $\bar v \in \R$, such that  $v 
\mapsto \mu(v)$ is increasing for $v < \bar v$ and decreasing for $v > \bar v$, 
and so $\mu$ reaches its unique maximum at $\bar v$. 
  \item[iv)] \emph{Symmetry:} For all $v \in \R$, $\mu(2 \bar v - v) = \mu(v)$.
\end{itemize}
In this case, there is a unique continuous  and increasing  function 
$\varphi : (-\infty,0] \rightarrow \R^+$ such that
\begin{equation} \label{condphi}
\mu(v)= \varphi \Bigl(-\frac{\vert v -\bar v \vert^2} 2\Bigr), \quad  \text{and} \quad 
\int_{-\infty}^0 \varphi(u) \sqrt{-u} \,du <+\infty.
\end{equation}
\end{deft}

In Theorems \ref{thm} and \ref{thm-IP}, we will justify the quasineutral limit
 around such stable equilibria. 
The stability of the solution $f_\ep$
of~\eqref{quasi} around $\mu$ will be controlled thanks to the
so-called ``Casimir functionals'' defined in the following
\begin{deft} \label{def:Casimir}
For any $S$-stable profile, to which we associate the function~$\varphi$
defined in~\eqref{condphi}, 
we introduce a function $Q : \R^+ \rightarrow \R$ satisfying $Q(0)=0$ and $Q' = 
\varphi^{-1} $ on the range of $\varphi$. Outside this range, the only 
condition is 
that $Q'$ is increasing and continuous so that $Q$ is globally convex and $C^1$ 
on $\R^+$. For such a function $Q$, we can define the associated Casimir functional
\begin{equation} \label{HQintro} 
H_Q(f) := \int \left(Q(f) -  Q(\mu) - Q'(\mu)(f-\mu) \right) dvdx,
\end{equation}
which is well defined with value in $[0,+ \infty ]$.
\end{deft}
Remark that there is some freedom in the choice of $Q$, whose values are 
imposed only on the range of $\varphi$. But all the results will shall give in 
the sequel are valid independently of the particular choice made for $Q$.

This ``Casimir functional'' is a kind of relative entropy for the Vlasov-Poisson
equation; it is built in order to be minimized by $\mu$. Similar quantities were 
originally introduced by Arnold~\cite{Ar65,Ar66} for fluid models. Later, their use was
generalized to plasma models in~\cite{HMRW}.
The first fully rigorous application of these functionals to plasmas was
performed by Rein in~\cite{Rei}: he used them to prove the $L^2$ stability
around compactly supported equilibria that are decreasing function of the
energy. We also refer to that article for a very clear explanation of their
interest.
This result was later extended in \cite{CCD} to non compact equilibria. As we
shall see in Section~\ref{sec:conv}, the above quantity~\eqref{HQintro}, may
control $L^p$ norms
$$
\| f_\ep-\mu \|_p^2  \le \frac1C H_Q(f_\ep) , \quad \text{ for some } \;p, 
\quad  \text{ for instance } p=1,2.
$$ 
We refer to Proposition~\ref{prop:HQ} and more generally to 
section~\ref{subsec:HQ} for more details. 
Therefore, all the following results, showing that $H_Q(f_\ep)$ (or $H_Q$ 
applied to some filtered distribution function) remains small under some 
assumption on the initial
conditions, may be translated in results of stability in more usual norms.

\bigskip

 Our first result of stability is given in the following
\begin{thm}  \label{thm}
Let $\mu$ be a $S$-stable stationary solution to \eqref{kineuler-intro} of the 
form given in~\eqref{condphi}. Assume that there exists $\eta>0$, such that  $\mu$ satisfies 
\begin{equation}
\label{assumption-mu}
\int \mu(v) (1+ v^{2 + \eta})\,dv<+\infty.
\end{equation}

For all $\epsilon >0$, let $(f_\epsilon,V_\epsilon)$ be 
a  strong solution to~\eqref{quasi}, with initial datum $f_{0,\epsilon}$ and define the 
``modulated energy''
\begin{equation} \label{def:Lep}
\LL_\ep[f_\ep] := 
H_Q(f_\ep)  + \frac {\ep^2} 2 \int ( \partial_x V_\epsilon )^2 dx.
\end{equation}
Then, $\LL_\ep$ is a Lyapunov functional in the sense that
$$
\forall t \in \R^+, \qquad \LL_\ep[f_\ep(t)] = \LL_\ep[f_{0,\ep}].
$$
\end{thm}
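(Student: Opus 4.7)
The plan is to show that $\mathcal{L}_\epsilon[f_\epsilon]$ is a linear combination of quantities individually conserved by the Vlasov-Poisson flow, namely a Casimir $\int Q(f_\epsilon)\,dxdv$, the total energy $\EE_\ep[f_\ep]$, the total momentum $\int v f_\ep\,dxdv$, the mass, and constants depending only on $\mu$.

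The first step is to exploit the structure of $Q$ and $\mu$ to simplify the linear term of $H_Q$. By definition $\mu(v)=\varphi(-|v-\bar v|^2/2)$ and $Q'=\varphi^{-1}$ on the range of $\varphi$, so $Q'(\mu(v))=-|v-\bar v|^2/2$ for every $v$. Plugging this into \eqref{HQintro} and expanding $|v-\bar v|^2$, I get
\begin{equation*}
H_Q(f_\ep)=\int Q(f_\ep)\,dxdv+\frac12\int v^2 f_\ep\,dxdv-\bar v\int v f_\ep\,dxdv+\frac{\bar v^2}{2}\int f_\ep\,dxdv+C_\mu,
\end{equation*}
where $C_\mu$ gathers the terms depending only on $\mu$ (finite because of \eqref{assumption-mu}, which controls $\int(v-\bar v)^2\mu$, and because $Q(\mu)\leq Q'(\mu)\mu=\frac{|v-\bar v|^2}{2}\mu$ by convexity of $Q$ with $Q(0)=0$). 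Adding the field energy, this reads
\begin{equation*}
\LL_\ep[f_\ep]=\int Q(f_\ep)\,dxdv+\EE_\ep[f_\ep]-\bar v\int vf_\ep\,dxdv+\frac{\bar v^2}{2}\int f_\ep\,dxdv+C_\mu.
\end{equation*}

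The second step is to check conservation of each of the four time-dependent pieces. Mass conservation is immediate from the transport form of \eqref{quasi}. The Casimir is conserved because a strong solution $f_\ep$ is constant along the characteristics of the divergence-free Hamiltonian field $(v,-\pa_x V_\ep)$, so $\int Q(f_\ep)\,dxdv=\int Q(f_{0,\ep})\,dxdv$ for any continuous $Q$. Energy conservation is the standard computation: differentiate $\EE_\ep[f_\ep]$ in time and use the Vlasov equation together with the Poisson relation, which yields a cancellation. Finally, for momentum conservation I differentiate $\int v f_\ep\,dxdv$, integrate by parts and use that $\rho_\ep=1-\ep^2\pa_x^2 V_\ep$:
\begin{equation*}
\frac{d}{dt}\int vf_\ep\,dxdv=-\int\pa_xV_\ep\,\rho_\ep\,dx=-\int\pa_xV_\ep\,dx+\ep^2\int\pa_xV_\ep\,\pa_x^2V_\ep\,dx=0,
\end{equation*}
both pieces vanishing by periodicity. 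Summing, $\LL_\ep[f_\ep(t)]=\LL_\ep[f_{0,\ep}]$ for all $t\geq 0$.

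The main technical subtlety, rather than a real obstacle, is to justify the integrability of the various pieces appearing after the expansion, uniformly in time; this is where the assumption \eqref{assumption-mu} (finite $(2+\eta)$-moment of $\mu$) enters, together with the bounded initial energy hypothesis that controls $\int v^2 f_\ep\,dxdv$, hence $\int v f_\ep\,dxdv$ by Cauchy-Schwarz. For the Casimir, convexity of $Q$ and $Q(0)=0$ yield $Q(f_\ep)\geq Q(\mu)+Q'(\mu)(f_\ep-\mu)$ pointwise, which bounds its negative part by a combination of the energy and the $\mu$-dependent constant, while the positive part is controlled by $H_Q(f_{0,\ep})$ which we assume finite; this ensures all manipulations above are meaningful. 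No further ingredient is needed.
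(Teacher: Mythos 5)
Your proof is essentially the paper's: both rewrite $Q'(\mu(v))=-|v-\bar v|^2/2$ via $Q'=\varphi^{-1}$, expand $H_Q$ so that $\LL_\ep$ becomes the Casimir $\int Q(f_\ep)\,dxdv$ plus the total energy, minus $\bar v$ times the total momentum, plus $\frac{\bar v^2}{2}$ times the mass, plus a $\mu$-dependent constant, and conclude by conservation of each piece for strong solutions. The only flawed point is your justification that $\int Q(\mu)\,dxdv$ is finite: the convexity inequality $Q(\mu)\le Q'(\mu)\mu=-\frac{|v-\bar v|^2}{2}\mu$ (note the sign) bounds $Q(\mu)$ from \emph{above} by a nonpositive quantity, so it shows $\int Q(\mu)\le 0$ but says nothing about $\int Q(\mu)>-\infty$, which is what is actually needed; the paper instead establishes this in Proposition~\ref{prop:HQ}~$i)$, either through the exact identity $\int Q(\mu)\,dxdv=-3\int_{-\infty}^0\sqrt{-u}\,\varphi(u)\,du$ (finite by~\eqref{condphi}) or through the pointwise lower bound $Q(z)\ge -Cz^{1/3}$ deduced from $\varphi(u)\le C(1-u)^{-3/2}$ combined with H\"older and the moment assumption. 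With that substitution your argument is complete and coincides with the paper's.
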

The proof consists in a reformulation of the functional $\LL_\ep$ that shows 
that it is composed only of invariant quantities. Remark that the assumption 
that $\mu$ is $S$-stable is crucial since otherwise, we can not even define an 
associated Casimir functional.

\begin{rque} It is also possible in this theorem to consider a sequence of global weak solutions to~\eqref{quasi} in the sense of Arsenev \cite{Ar}, in which case the conclusion becomes
$$
\forall t \in \R^+, \qquad \LL_\ep[f_\ep(t)] \leq \LL_\ep[f_{0,\ep}].
$$
\end{rque}

The above theorem is particularly useful  when the initial potential energy 
vanishes in the limit, that is when $\| \partial_x V_{0,\ep} \|_2 = 
o\bigl(\ep^{-1}\bigr)$, or in other words when $\| \rho_{0,\ep} -1 \|_{H^{-1}} 
= o(\ep)$. 
It corresponds to what we can call well-prepared initial data, for which there are no plasma oscillations in the limit. In this situation, we can express the conclusion of the previous theorem as follows
\begin{coro}
If the sequence of initial data is well-prepared in the sense that
$\LL_\ep[f_{\eps,0}] \to 0$, then for all $t\geq 0$, $\LL_\ep[f_\ep(t)] \to 0$.
Moreover the rate of convergence to $0$ for any positive time is the same as the
one at initial time.
\end{coro}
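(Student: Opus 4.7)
The plan is to observe that this corollary is essentially a verbatim translation of Theorem~\ref{thm}, and so the argument consists only of reading off the conservation identity. Specifically, Theorem~\ref{thm} asserts that under the stated assumptions on $\mu$, the modulated energy $\LL_\ep$ is conserved along any strong solution of~\eqref{quasi}, i.e.\
\[
\LL_\ep[f_\ep(t)] = \LL_\ep[f_{0,\ep}] \qquad \text{for every } t \ge 0.
\]

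Given this exact identity, the well-preparedness assumption $\LL_\ep[f_{0,\ep}] \to 0$ as $\ep\to 0$ immediately forces $\LL_\ep[f_\ep(t)] \to 0$ for every fixed $t \ge 0$. Moreover, because the two sides of the conservation identity are literally equal (and not merely bounded by one another up to constants), any rate of convergence to $0$ satisfied by the initial modulated energy is automatically inherited by $\LL_\ep[f_\ep(t)]$ at arbitrary positive time: if $\LL_\ep[f_{0,\ep}] = O(\ep^\gamma)$ for some $\gamma>0$, then $\LL_\ep[f_\ep(t)] = O(\ep^\gamma)$ as well, uniformly in $t$.

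Since the proof reduces to invoking the Lyapunov property and observing that a constant in $t$ preserves both the limit and its rate, there is no real obstacle; the only point to keep in mind is that Theorem~\ref{thm} needs the solution to be strong (so that the conservation is an equality, not an inequality), which is exactly the setting in which the corollary is stated. In the weak-solution variant, the same argument would work with $=$ replaced by $\le$ and would give the same conclusion, since it would only strengthen the bound on $\LL_\ep[f_\ep(t)]$ by $\LL_\ep[f_{0,\ep}]$.
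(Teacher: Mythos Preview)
Your proposal is correct and matches the paper's approach: the corollary is an immediate consequence of the conservation identity $\LL_\ep[f_\ep(t)] = \LL_\ep[f_{0,\ep}]$ from Theorem~\ref{thm}, and the paper does not provide a separate proof beyond stating it as a corollary.
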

 This corollary is thus orthogonal to Theorem~\ref{thmGrenier-revisited}, since it tells us that when the profile is $S$-stable, then it is not possible to find initial conditions satisfying the conclusions of that theorem.

On the other hand, this does not say much if the data are not well-prepared. For
instance, in the case where $ V_{0,\ep}$ displays some oscillations in space,
then we do not expect the plasma oscillations to vanish, and we have to filter
them in order to prove a convergence result. The precise result is the
following.

\begin{thm}
\label{thm-IP}
Let $\mu$ be a $S$-stable stationary solution to \eqref{kineuler-intro} of the 
form
given in~\eqref{condphi}. Assume that there exists $\eta>0$, such that  $\mu$
satisfies 
\begin{equation}
\label{assumption-mu-IP}
\int \mu(v) (1+ v^{2 + \eta})\,dv<+\infty.
\end{equation}

For all $\epsilon >0$, let $(f_\epsilon,V_\epsilon)$ be a global 
strong solution 
to~\eqref{quasi}, with initial datum $f_{0,\epsilon}$. For any smooth potential 
$V_0$ such that
$\partial_{xxx} V_0 \in L^\infty$, we define an associated ``modulated free
energy''
\begin{equation} \label{def:Lep2}
\begin{split}
\LL^O_\ep(t) & := 
H_Q\left[ f_\ep\left(t,x,v - \partial_x V_0(x-
\bar v t) \sin \frac t \ep \right) \right]
\\
& \hspace{20mm} + \frac 1 2 \int \Bigl[  \ep \partial_x V_\epsilon -
\partial_x V_0(x- \bar v t)
\cos \frac t \ep \Bigr]^2 dx.
\end{split}
\end{equation}
Then, we can control the growth of  $\LL^O_\ep$  in the
sense that there exists a constant $K>0$, depending on $\|
\partial_{xx} V_0 \|_\infty $ and $ \| \partial_{xxx} V_0 \|_\infty$, such
that for any $t>0$
\begin{equation}
 \forall t \geq 0, \quad \LL_\ep^{O}(t) \leq e^{2 \| \pa_{xx} V_0 
\|_{L^\infty} t}  \Bigl[ \LL_\ep^{O}(0)+  K  \ep \bigl( 1 + \EE_{\ep,0} 
+ \mathcal Q_{\ep,0} \bigr) \Bigr],
 \end{equation}
where
\begin{equation}
\label{eq-crucial}
\EE_{\ep,0} := \EE_\ep(f_{0,\ep}), 
\quad \text{and} \qquad 
\QQ_{\ep,0} := \int \left(|Q|(f_{\eps,0}) +
\frac{Q^2(f_{\eps,0})}{f_{\eps,0}}\right) \, dv dx.
\end{equation}
\end{thm}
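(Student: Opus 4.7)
The proof is a modulated energy (or relative entropy) estimate with a filter that accounts for plasma oscillations at leading order. I would first introduce the shift $\xi(t,x) := \partial_x V_0(x-\bar v t)\sin(t/\ep)$, the filtered distribution $\tilde f_\ep(t,x,v) := f_\ep(t,x,v-\xi(t,x))$, and the modulated field $\eta(t,x) := \ep \partial_x V_\ep - \partial_x V_0(x-\bar v t)\cos(t/\ep)$, so that $\LL^O_\ep = H_Q(\tilde f_\ep) + \frac12\int \eta^2\,dx$. A direct chain-rule computation from \eqref{quasi} shows that $\tilde f_\ep$ solves
\begin{equation*}
\partial_t \tilde f_\ep + (v-\xi)\partial_x \tilde f_\ep - \tilde F \, \partial_v \tilde f_\ep = 0, \qquad \tilde F = \frac{\eta}{\ep} + (\bar v - v + \xi)\,\partial_{xx}V_0(x-\bar v t)\sin\frac t\ep,
\end{equation*}
the key algebraic point being that the singular $(1/\ep)\partial_x V_0\cos(t/\ep)$ coming from $\partial_t\xi$ is reabsorbed together with $\partial_x V_\ep$ into $\eta/\ep$.

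Next I would differentiate each part of $\LL^O_\ep$ in time. Because the filtered transport has divergence-free phase-space velocity, $\int Q(\tilde f_\ep)\,dvdx$ is conserved and, using the identity $Q''(\mu)\mu'(v) = -(v-\bar v)$ (a consequence of $Q' = \varphi^{-1}$ and \eqref{condphi}),
\begin{equation*}
\frac{d}{dt} H_Q(\tilde f_\ep) = -\int Q'(\mu(v))\,\partial_t \tilde f_\ep\,dv dx = -\int (v-\bar v)\,\tilde F\,\tilde f_\ep\,dv dx.
\end{equation*}
For the field part, the Poisson equation together with $\partial_t \rho_\ep + \partial_x j_\ep = 0$ give $\partial_t(\ep\partial_x V_\ep) = (j_\ep - \bar j(t))/\ep$ (fixing a zero-mean gauge on $V_\ep$), so
\begin{equation*}
\frac{d}{dt}\tfrac12\int \eta^2\,dx = \frac{1}{\ep}\int \eta(j_\ep + \xi)\,dx + \bar v\int \eta\,\partial_{xx}V_0(x-\bar v t)\cos\frac t\ep\,dx.
\end{equation*}

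The central step is the cancellation at order $1/\ep$. Using $\tilde j := \int v\tilde f_\ep\,dv = j_\ep + \xi\rho_\ep$ (change of variables) and the $S$-symmetry of $\mu$ (which gives $\int (v-\bar v)\mu\,dv = 0$), the two singular contributions combine into $\frac{1}{\ep}\int \eta\bigl[\bar v\rho_\ep + \xi(1-\rho_\ep)\bigr]\,dx$. Inserting $1-\rho_\ep = \ep^2\partial_{xx}V_\ep$ and expanding $\ep\partial_x V_\ep = \eta + \partial_x V_0\cos(t/\ep)$, one converts this into integrals of the form $\int \partial_{xx}V_0\,\eta^2\,dx$ and $\int \eta\,\partial_{xx}V_0\,dx$ with oscillating coefficients, all bounded by $C\|\partial_{xx}V_0\|_\infty \LL^O_\ep$ or $C\|\partial_{xx}V_0\|_\infty \sqrt{\LL^O_\ep}$. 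Similarly, the quadratic term $(v-\bar v)^2\partial_{xx}V_0\sin(t/\ep)\tilde f_\ep$ in $\frac{d}{dt}H_Q$, decomposed along $\tilde f_\ep = \mu + h$, contributes a $\|\partial_{xx}V_0\|_\infty \LL^O_\ep$ piece plus oscillatory remainders, and similarly for the $\bar v \cos(t/\ep)$-term in $\frac{d}{dt}\|\eta\|_2^2$. Collecting everything and exploiting the fast oscillations $\sin(t/\ep), \cos(t/\ep)$ in the remaining $O(1)$ terms via integration by parts in $t$ against slowly-varying integrals (whose time-derivatives are controlled by $\EE_{\ep,0}$, $\QQ_{\ep,0}$ and $\|\partial_{xxx}V_0\|_\infty$) yields
\begin{equation*}
\LL^O_\ep(t) \leq \LL^O_\ep(0) + 2\|\partial_{xx}V_0\|_\infty \int_0^t \LL^O_\ep(s)\,ds + K\ep(1+\EE_{\ep,0}+\QQ_{\ep,0}),
\end{equation*}
and Gronwall's lemma closes the argument.

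The main obstacle is the control of the weighted moment $\int (v-\bar v)^2 h\,dvdx$ by the modulated energy, since $H_Q$ typically gives only $L^p$-type control on $h = \tilde f_\ep - \mu$. The device is to split according to $|h|$: in the region $\{|h| \le 1\}$ the quadratic lower bound on $H_Q$ near $\mu$ combined with the moment assumption~\eqref{assumption-mu-IP} suffices, whereas in $\{|h|>1\}$ the linear growth of the integrand forces the use of the conserved Casimir $\int Q^2(f_\ep)/f_\ep\,dvdx \le \QQ_{\ep,0}$ together with the kinetic energy $\EE_{\ep,0}$. This splitting is precisely what produces the $\QQ_{\ep,0}$ term in the final estimate and explains why the tail condition $\int \mu(v)v^{2+\eta}\,dv <\infty$ is required.
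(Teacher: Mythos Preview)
Your overall strategy --- filter the distribution by the shift $\xi$, differentiate $\LL^O_\ep$, check the $O(1/\ep)$ cancellation, and close by Gronwall after exploiting the fast oscillations --- is exactly the paper's. Your computation of the transport equation for $\tilde f_\ep$, the identity $\frac{d}{dt}H_Q(\tilde f_\ep)=-\int (v-\bar v)\tilde F\tilde f_\ep$, and the cancellation of the singular terms via $1-\rho_\ep=\ep^2\partial_{xx}V_\ep$ are all correct.

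There is, however, a genuine gap in your final paragraph. You propose to control the weighted second moment $\int (v-\bar v)^2 h\,\partial_{xx}V_0\,dvdx$ (with $h=\tilde f_\ep-\mu$) by a splitting $\{|h|\le 1\}\cup\{|h|>1\}$, producing a bound of the type $C\,H_Q(\tilde f_\ep)+C(\EE_{\ep,0}+\QQ_{\ep,0})$. Even if such a pointwise bound were true, it would feed into the differential inequality as a source of size $O(\EE_{\ep,0}+\QQ_{\ep,0})$ \emph{without} an $\ep$ factor, and Gronwall would then give $\LL^O_\ep(t)\le e^{Ct}\bigl[\LL^O_\ep(0)+C t(\EE_{\ep,0}+\QQ_{\ep,0})\bigr]$, which is useless for the quasineutral limit. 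The $\ep$ in front of $\QQ_{\ep,0}$ in the statement is not a byproduct of the splitting; it has to come from the time oscillations, and your splitting argument does not see them.

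The paper's device is different and sharper. Since $Q'(\mu(v))=-\tfrac12|v-\bar v|^2$, the term $\sin\frac t\ep\int (v-\bar v)^2\tilde f_\ep\,\partial_{xx}V_0$ is exactly $-2\sin\frac t\ep\int Q'(\mu)\tilde f_\ep\,\partial_{xx}V_0$. One then \emph{adds and subtracts} $2\sin\frac t\ep\int Q(\tilde f_\ep)\,\partial_{xx}V_0$ (the terms with $Q(\mu)$ and $Q'(\mu)\mu$ integrate to zero against $\partial_{xx}V_0$). This completes the expression to the \emph{weighted} Casimir integrand
\[
-2\sin\tfrac t\ep\int\bigl[Q(\tilde f_\ep)-Q(\mu)-Q'(\mu)(\tilde f_\ep-\mu)\bigr]\partial_{xx}V_0\,dvdx,
\]
which is pointwise nonnegative and hence bounded by $2\|\partial_{xx}V_0\|_\infty\,\LL^O_\ep$. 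The price is the leftover term $B^2_\ep(t):=2\int Q(f_\ep)\,\partial_{xx}V_0\,dvdx$ (note $\int Q(\tilde f_\ep)\psi(x)=\int Q(f_\ep)\psi(x)$ since the shift is only in $v$). This $B^2_\ep$ is \emph{slowly varying}: $Q(f_\ep)$ is transported by the unfiltered Vlasov flow, so $|B^2_\ep|\le 2\|\partial_{xx}V_0\|_\infty\QQ_{\ep,0}$ and $|\partial_t B^2_\ep|\le \|\partial_{xxx}V_0\|_\infty(\EE_{\ep,0}+\QQ_{\ep,0})$, the latter via Cauchy--Schwarz on $\int v\,Q(f_\ep)\,\partial_{xxx}V_0$. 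Only now does the integration by parts in time against $\sin\frac t\ep$ produce the factor $\ep$ in front of $\QQ_{\ep,0}$. This is precisely where $\QQ_{\ep,0}$ enters, not through a large/small-$h$ splitting.
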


Of course, this theorem implies the following stability result
\begin{coro}
Assume in addition to the hypotheses of Theorem~\ref{thm-IP} that $f_{\ep,0}$
satisfies the (very weak)  bound     
$$
\EE_{\ep,0} +\QQ_{\ep,0}=o(\ep^{-1}).
$$
If $\LL^O_\ep[f_{\eps,0}] \to 0$, then for all $t\geq 0$, $\LL^O_\ep[f_\ep(t)]
\to 0$. 
\end{coro}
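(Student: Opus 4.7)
The corollary follows directly from the quantitative estimate of Theorem~\ref{thm-IP}; the plan is simply to pass to the limit $\ep \to 0$ in that inequality, for a fixed time $t \geq 0$ and a fixed test potential $V_0$.

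First, I would observe that, because $t$ and $V_0$ are fixed, the exponential prefactor $e^{2\|\pa_{xx} V_0\|_{L^\infty} t}$ and the constant $K$ (which depends only on $\|\pa_{xx} V_0\|_{\infty}$ and $\|\pa_{xxx} V_0\|_{\infty}$) are independent of $\ep$ and thus bounded uniformly as $\ep \to 0$. Consequently, the asymptotic behavior of $\LL_\ep^O(t)$ is entirely governed by that of the bracket
$$
\LL_\ep^O(0) + K \ep \bigl(1 + \EE_{\ep,0} + \QQ_{\ep,0}\bigr).
$$

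The first term tends to $0$ by hypothesis on the initial modulated free energy. For the second, I would split
$$
K \ep \bigl(1 + \EE_{\ep,0} + \QQ_{\ep,0}\bigr) = K\ep + K \cdot \ep \bigl(\EE_{\ep,0} + \QQ_{\ep,0}\bigr).
$$
The first summand vanishes trivially, while the second is $K \cdot \ep \cdot o(\ep^{-1}) = o(1)$, thanks precisely to the assumption $\EE_{\ep,0} + \QQ_{\ep,0} = o(\ep^{-1})$. Taking the limit in the inequality of Theorem~\ref{thm-IP} then gives $\LL_\ep^O(t) \to 0$, as claimed.

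There is essentially no obstacle to overcome here: the corollary is just the asymptotic reading of Theorem~\ref{thm-IP}, and the smallness hypothesis on the initial energy and Casimir functional is manifestly the sharp threshold required to absorb the $\OO(\ep)$ error term produced by the filtration through the oscillating potential $V_0$.
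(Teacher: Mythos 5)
Your proof is correct and matches the paper's (implicit) argument: the corollary is stated there as an immediate consequence of the quantitative bound in Theorem~\ref{thm-IP}, obtained exactly as you do by fixing $t$ and $V_0$, noting the prefactor and $K$ are $\ep$-independent, and absorbing $K\ep\,(1+\EE_{\ep,0}+\QQ_{\ep,0})=o(1)$ via the hypothesis $\EE_{\ep,0}+\QQ_{\ep,0}=o(\ep^{-1})$.
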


Note that the best possible rate of convergence is given by $\eps$,
contrary to the well-prepared case, and that this best rate is reached when
$\EE_{\ep,0}$ and $\QQ_{\ep,0}$ are uniformly bounded in $\ep$.

\begin{rque}  
Observe that the plasma oscillations  which have to be filtered are mostly oscillations in velocity. In fact, 
the density $\rho_\ep$ will remain close to $1$. This may be quantified 
in some cases, for instance when $\mu$ is a Maxwellian equilibrium, in which case 
the Casimir functional is the ``usual'' relative entropy (see $iii)$ in Proposition~\ref{prop:HQ}).
\end{rque}

\begin{rque} In the ill-prepared case, to obtain stability, we have to add the
assumption that $Q_{\eps,0}$ is (not necessarly uniformly)   bounded.
This is not a huge requirement: for instance, when $\mu$ is a Maxwellian
distribution, it only requires $f_{\ep,0} (\ln f_{\ep,0})^2$ to be integrable
(see again $iii)$ in Proposition~\ref{prop:HQ}).

\end{rque}
In the case where $V_0$ does not have a bounded third derivative, 
an interpolation argument still allows to get the following stability result.
\begin{coro} \label{rem:lowreg}
With the same notations and hypotheses of Theorem \ref{thm-IP}, except that we only assume that $\pa_{xx} V_0 \in L^\infty$, we obtain
\begin{equation}
 \forall t \geq 0, \quad \LL_\ep^{O}(t) \leq 4 \, e^{2 \| \pa_{xx} V_0 
\|_{L^\infty} t}  \Bigl[ \LL_\ep^{O}(0)+  K'  \ep^{\frac23} \bigl( 1 +
\EE_{\ep,0} + \QQ_{\ep,0} \bigr)^{\frac 23} \Bigr],
\end{equation}
for some constant $K'$ depending only on $\| \pa_{xx} V_0 \|_\infty$. 
\end{coro}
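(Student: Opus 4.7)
The plan is to obtain the result by regularizing $V_0$, applying Theorem~\ref{thm-IP} to the smoothed potential, quantifying the cost of the approximation, and optimizing the regularization scale.

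Concretely, I would fix a standard mollifier $\eta_\delta$ in $x$ at scale $\delta>0$ and set $V_0^\delta := V_0 * \eta_\delta$. Then $\|\pa_{xx} V_0^\delta\|_\infty \le \|\pa_{xx} V_0\|_\infty$, $\|\pa_x V_0 - \pa_x V_0^\delta\|_\infty \le C\delta\, \|\pa_{xx} V_0\|_\infty$, and crucially $\|\pa_{xxx} V_0^\delta\|_\infty \le C\delta^{-1}\|\pa_{xx} V_0\|_\infty$. Writing $\LL_\ep^{O,\delta}$ for the modulated energy \eqref{def:Lep2} computed with $V_0^\delta$ in place of $V_0$, Theorem~\ref{thm-IP} would then yield
\begin{equation}
\LL_\ep^{O,\delta}(t) \le e^{2\|\pa_{xx} V_0\|_\infty t}\bigl[\LL_\ep^{O,\delta}(0) + C\, \delta^{-1} \ep \,(1 + \EE_{\ep,0} + \QQ_{\ep,0})\bigr],
\end{equation}
provided one tracks the polynomial dependence of the constant $K$ of Theorem~\ref{thm-IP} on $\|\pa_{xxx} V_0\|_\infty$.

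The next step is to compare $\LL_\ep^O$ with $\LL_\ep^{O,\delta}$ at both $t=0$ and at general $t$. For the quadratic potential part, the elementary inequality $|a-b|^2 \le 2|a-c|^2 + 2|b-c|^2$ with $c = \pa_x V_0^\delta(x-\bar v t)\cos(t/\ep)$ directly produces a multiplicative factor~$2$ and an additive error $O(\delta^2 \|\pa_{xx} V_0\|_\infty^2)$. For the entropy part, since $\int Q(\tilde f)\,dxdv$ is invariant under the velocity shift, a change of variable reduces the comparison to
\begin{equation}
H_Q[\tilde f] - H_Q[\tilde f^\delta] = \int \bigl[Q'(\mu(w+\varphi^\delta)) - Q'(\mu(w+\varphi))\bigr] f_\ep(x,w)\,dxdw,
\end{equation}
where $\varphi = \pa_x V_0(x-\bar v t) \sin(t/\ep)$ and $\varphi^\delta = \pa_x V_0^\delta(x-\bar v t) \sin(t/\ep)$. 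Using the explicit identity $Q'(\mu(v)) = -\tfrac 1 2 |v-\bar v|^2$ coming from Definition~\ref{def:S-stable} and \ref{def:Casimir}, the integrand reduces to a polynomial expression in $\varphi - \varphi^\delta$ that can be expanded and integrated by parts in~$x$; the first-order term involves $j_\ep - \bar v \rho_\ep$ and, through $\pa_x j_\ep = -\pa_t \rho_\ep = O(\ep^2)$ together with the zero average of $\pa_x (V_0 - V_0^\delta)$ on the torus, is shown to contribute at an effective order $\delta^2$ up to mild moments of $f_\ep$ controlled by $\EE_{\ep,0}$ and $\QQ_{\ep,0}$.

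Combining these two comparisons at $t$ and at $0$, one arrives at an estimate of the form
\begin{equation}
\LL_\ep^O(t) \le 4\, e^{2\|\pa_{xx} V_0\|_\infty t}\bigl[\LL_\ep^O(0) + C\delta^2 + C\delta^{-1}\ep\,(1 + \EE_{\ep,0} + \QQ_{\ep,0})\bigr],
\end{equation}
and the final step is to optimize: balancing $\delta^{-1}\ep\,(1+\EE_{\ep,0}+\QQ_{\ep,0})$ against $\delta^2$ gives $\delta = \bigl(\ep(1+\EE_{\ep,0}+\QQ_{\ep,0})\bigr)^{1/3}$, which yields exactly the announced rate $\ep^{2/3}(1 + \EE_{\ep,0} + \QQ_{\ep,0})^{2/3}$. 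The main difficulty in this plan is the entropy comparison: the naive bound only gives an $O(\delta)$ error, which leads to a suboptimal $\ep^{1/2}$ rate; reaching $\ep^{2/3}$ requires trading a further power of $\delta$ against the near-quasineutrality of the macroscopic observables through a careful integration by parts and use of the conservation law $\pa_t \rho_\ep + \pa_x j_\ep = 0$ together with the Poisson equation.
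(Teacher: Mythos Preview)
Your overall strategy coincides with the paper's: mollify $V_0$ at a scale $\delta$ (the paper writes $\eta$), apply Theorem~\ref{thm-IP} to the regularized potential so that the constant $K$ becomes $K'\bigl(1+\delta^{-1}\bigr)$, compare the two modulated energies at time~$0$ and time~$t$ at the cost of a multiplicative factor and an additive $O(\delta^2)$, and then optimize in~$\delta$. The comparison of the electric-potential part via $(a+b)^2\le 2a^2+2b^2$ is exactly what the paper does, and your final choice $\delta\sim\bigl[\ep(1+\EE_{\ep,0}+\QQ_{\ep,0})\bigr]^{1/3}$ matches the paper's.

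The one substantive gap in your argument is the treatment of the Casimir part. You correctly notice that replacing the velocity shift $\pa_x V_0\sin(t/\ep)$ by $\pa_x V_0^\delta\sin(t/\ep)$ inside $H_Q$ produces, at first order, the term $\int(\psi^\delta-\psi)\,j_\ep\,dx$ (taking $\bar v=0$), which is a priori only $O(\delta)$. Your proposed cure, however, rests on the claim ``$\pa_x j_\ep=-\pa_t\rho_\ep=O(\ep^2)$'', and this is not correct: from the Poisson equation $\rho_\ep-1=-\ep^2\pa_{xx}V_\ep$ one gets $\pa_t\rho_\ep=-\ep^2\pa_{txx}V_\ep$, but the oscillation equations~\eqref{Oscill} give $\ep^2\pa_{tx}V_\ep=\pa_x J_\ep$, hence $\pa_t\rho_\ep=-\pa_{xx}J_\ep=-\pa_x j_\ep$, which is just the continuity equation again and is generically $O(1)$, not $O(\ep^2)$ (plasma oscillations make $j_\ep$ oscillate on the $\ep$-timescale with $O(1)$ amplitude). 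So the integration-by-parts scheme you sketch does not recover the extra power of~$\delta$; as written, your argument only yields the suboptimal $\ep^{1/2}$ rate you yourself flag.

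For what it is worth, the paper's own write-up is quite terse at precisely this point: it records the comparison $\LL_{\ep,\eta}^O\le 2\,\LL_\ep^O+C\eta^2$ after expanding only the quadratic potential term, writing ``$H_Q(\tilde f_\ep)$'' on both sides without explicitly distinguishing the two velocity shifts. Your diagnosis of where the difficulty lies is therefore sharper than the paper's presentation; but your proposed resolution does not go through, and you would need a different mechanism to upgrade the $O(\delta)$ entropy error to $O(\delta^2)$.
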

The argument is  detailed at the end of Section~\ref{sec:IP}. In this case, the best rate of convergence is therefore given by $\ep^{2/3}$.
\begin{rque}
Alternatively to the Modulated energy (or Casimir functional) technique developed here, the 
technique of \cite{MP86,BR} based on rearrangement inequalities may be used to 
prove the stability in the well-prepared case. But this seems more difficult for the
ill-prepared case. Indeed, it relies on the fact 
that a $S$-stable profile (for $\bar v=0$), will be the minimizer of the total energy 
in some class of equi-measurable functions. It does not seem clear how to adapt this argument
in the ill-prepared case, mostly because the plasma oscillations do carry some 
energy in the limit.
\end{rque}

\subsection{Locally  symmetric solutions to~\eqref{kineuler-intro} 
are homogeneous.}

The two previous theorems of stability rely on
a natural monotonicity condition, but also on a symmetry condition on $f$. Indeed, we use in
a crucial way a Casimir functional, that can be constructed only for symmetric
profiles. 

It is maybe also natural to expect that such conditions could be helpful to 
prove the convergence for solutions to \eqref{kineuler-intro} which are not 
necessarily stationary, but which satisfy the monotonicity condition and the symmetry 
condition at any time $t$ and any position $x$
\begin{equation} \label{SymGene}
\forall\, t \in \R^+,\; \forall\, x \in \T, \; \exists\,  \bar v(t,x), \;
\varphi_{t,x} \; \text{s.t.} \quad 
f(t,x,v) := \varphi_{t,x} \Bigl( -\frac{\vert v -\bar v(t,x) \vert^2} 2 \Bigr)
\end{equation}
where $\varphi_{t,x}: \R^- \to \R^+$ is increasing. Then, for such
functions, it is easy to build (time and position dependent)  
 relevant Casimir functionals as in Definition \ref{def:Casimir}.
``Unfortunately'',  they cannot help us to get a stability result
around solutions of~\eqref{kineuler-intro} with a non trivial dynamics, 
since we shall prove that the only
solutions $(f,V)$ to \eqref{kineuler-intro} satisfying also~\eqref{SymGene}
(along with some weak regularity assumptions) are 
the stationary equilibria $\mu$ for which $\bar v$ and $\varphi$ are
independent of $t$ and $x$ 
\begin{equation}
\mu(v)= \varphi\left(-\frac{\vert v -\bar v\vert^2}{2}\right).
\end{equation}
See Proposition~\ref{prop-EQ} for the precise result. Loosely speaking, this means there is virtually no hope of relying on a modulated energy method to derive the quasineutral Vlasov equation~\eqref{kineuler-intro}, for non stationary data.

\subsection{Construction of BGK waves for  the quasineutral Vlasov
equation.}
 
In order to emphasize on the somewhat ``degeneracy'' of the limit
system~\eqref{kineuler-intro}, we will also show that the construction 
of the equivalent of what the so-called BGK waves in the Vlasov-Poisson case \cite{BGK} is much more degenerate in the quasineutral case.
Precisely, we will study the following boundary problem for
the associated stationary  kinetic equation
\begin{equation} \label{eq:vla1D}
\begin{cases}
v \,  \partial_x f - \partial_x V \partial_v f = 0, \\
\rho = \int f(x,v) \,dv =1, 
\end{cases} 
\end{equation}
on the space $\Omega = [0,1]\times \R$. The incoming boundary conditions are
given by
\begin{equation} \label{eq:BC} \begin{cases}
f(0,v) = f_0^+(v) \quad \text{if} \; v \geq 0, \\
f(1,v) = f_0^-(v) \quad \text{if} \; v \leq 0.
\end{cases} \end{equation} 

  This model is the stationary equation associated to the
quasineutral Vlasov equation~\eqref{kineuler-intro}, with the  boundary
conditions~\eqref{eq:BC}. We prove the following
\begin{thm} \label{thm:1}
Assume that $f_0^\pm: \R^\pm \rightarrow \R^+$  are  two nonnegative and measurable functions such that  
$\int_0^\infty \bigl( f_0^+(v) + f_0^-(-v) \bigr) \,dv =1$. Define the 
function $f_T$ on $(0,+\infty)$ as below
\begin{equation} \label{def:fTdouble}
f_T(u) := \frac1 \pi \int_0^\infty \bigl( f_0^+(v) + f^-_0(-v)\bigr) \frac{u\,v\,dv}{(u^2 + 
v^2)^{\frac32}}.
\end{equation}
Then for any continuous potential  
$ V : [0,1] \rightarrow \R^-$ satisfying $V(0)=V(1)=0$ 
the function
\begin{equation} \label{def:fstat}
 (x,v) \mapsto f(x,v) =
 \begin{cases} 
\ds   f_0^+ \bigl( \sqrt{v^2 + 2V(x) } \bigr) &
  \text{if} \quad v  \ge \sqrt{ -2 V(x) }, \\
  f_0^- \bigl(- \sqrt{v^2 + 2V(x) } \bigr) & \text{if} \quad v  \le - \sqrt{-2 V(x) }, \\
  f_T \bigl(\sqrt{-v^2 - 2V(x) } \bigr) &
\text{if} \quad |v|  < \sqrt{-2 V(x) },
 \end{cases}
\end{equation}
together with $V$ gives a solution of~\eqref{eq:vla1D} in the sense of
distributions.  Moreover, any solution with $V$ nonpositive and vanishing 
at the boundary is of the above form.
\end{thm}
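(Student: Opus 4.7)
The plan is to exploit the Hamiltonian structure of the stationary Vlasov equation and reduce the density constraint $\rho=1$ to an Abel-type integral equation in the energy variable.

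First, I would analyze the characteristics of the equation $v\,\partial_x f - \partial_x V\,\partial_v f = 0$: these are the orbits of the Hamiltonian $H(x,v) = \tfrac{v^2}{2} + V(x)$, along which $f$ must be constant. Since $V \leq 0$ with $V(0)=V(1)=0$, the phase portrait splits according to the sign of $H$. For $H > 0$ (equivalently $|v| > \sqrt{-2V(x)}$) the orbit meets $\partial\Omega$ exactly once with a fixed sign of $v$, so conservation of $H$ together with the incoming datum~\eqref{eq:BC} forces $f(x,v) = f_0^+(\sqrt{v^2 + 2V(x)})$ when $v>0$ and $f(x,v) = f_0^-(-\sqrt{v^2 + 2V(x)})$ when $v<0$. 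For $H < 0$ the orbit is closed in a connected component of $\{V < H\}$, so $f$ can only depend on the energy; by the $v\mapsto-v$ symmetry of the orbit one writes $f(x,v) = f_T(\sqrt{-v^2 - 2V(x)})$ for a single unknown function $f_T$. This shows that any distributional solution must take the shape~\eqref{def:fstat} modulo the choice of $f_T$, and that the untrapped part of~\eqref{def:fstat} automatically solves the transport equation.

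Next, I would translate the density constraint $\int f\,dv = 1$ into an equation on $f_T$. Setting $a := -2V(x) \geq 0$ and changing variables ($w = \sqrt{v^2+2V(x)}$ outside the potential well, $u = \sqrt{-v^2-2V(x)}$ inside), the constraint depends only on $a$ and reads
\[
\int_0^\infty g(w)\,\frac{w\,dw}{\sqrt{w^2+a}} + 2\int_0^{\sqrt{a}} f_T(u)\,\frac{u\,du}{\sqrt{a-u^2}} = 1,
\]
where $g(w):=f_0^+(w)+f_0^-(-w)$ satisfies $\int_0^\infty g\,dw = 1$ by assumption. The substitution $r=u^2$ turns the trapped integral into a classical Abel transform of $r\mapsto f_T(\sqrt{r})$, while the first term is a smooth explicit function of $a$ that equals $1$ at $a=0$. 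Applying the Abel inversion formula $h(a) = \tfrac{1}{\pi}\tfrac{d}{da}\int_0^a F(r)(a-r)^{-1/2}\,dr$, evaluating the double integral that appears via the explicit primitive $\int_0^a (a-r)^{-1/2}(w^2+r)^{-1/2}\,dr = 2\arcsin\sqrt{a/(w^2+a)}$, and differentiating in $a$ yields the closed formula~\eqref{def:fTdouble} for $f_T$. Nonnegativity of $f_T$ is then automatic from that of the integrand, and substitution back into the Abel equation confirms $\rho\equiv 1$.

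Finally, I would establish uniqueness. The characteristic analysis above already fixes $f$ on the untrapped region and prescribes its shape on the trapped region, so the only remaining degree of freedom is the profile $f_T$; this one is pinned down by $\rho=1$ through Abel inversion, giving~\eqref{def:fTdouble}. The main obstacle, I expect, is a consistency issue in this uniqueness step when $V$ has several potential wells: a given energy level $-a/2$ may be realized in several disjoint components of $\{V<-a/2\}$, and a priori one could use a different trapped profile on each component. The resolution is that evaluating $\rho=1$ at a point in each well produces the \emph{same} Abel equation in $a$, so by uniqueness of Abel inversion all wells carry the same $f_T$ on the overlap of their accessible energy ranges; the boundary condition $V(0)=V(1)=0$ ensures that every energy in $[0,-\min V]$ is realized in the interior, so the local pieces glue into the single global $f_T$ of~\eqref{def:fTdouble}. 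A secondary technical point is to justify the Abel inversion when $f_0^\pm$ are merely measurable, which is handled either by regularization or by recognizing~\eqref{def:fTdouble} directly as the inverse Abel transform without differentiating under the integral sign.
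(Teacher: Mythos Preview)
Your proposal is correct and follows essentially the same route as the paper: Hamiltonian characteristics fix the untrapped part and reduce the trapped part to a function of the energy, the constraint $\rho=1$ becomes an Abel-type integral equation for $f_T$, and Abel inversion yields the explicit formula~\eqref{def:fTdouble}. The only cosmetic difference is in the inversion step: the paper verifies directly, via the elementary identity $\int_a^b \tfrac{u\,du}{\sqrt{(b^2-u^2)(u^2-a^2)}}=\tfrac{\pi}{2}$, that the candidate $f_T(u)=\tfrac{1}{\pi}\int_0^u g'(s)(u^2-s^2)^{-1/2}\,ds$ solves the equation and then simplifies, whereas you pass to the squared variable and apply the textbook Abel inversion formula with the primitive $2\arcsin\sqrt{a/(w^2+a)}$; both computations are equivalent and your multi-well discussion matches the paper's remark that the trapped density is uniquely determined by $\rho=1$ regardless of connectedness of the energy levels.
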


The striking points in Theorem \ref{thm:1} are that:
\begin{itemize}
\item  there is a huge freedom in the choice of the potential $V$. In particular 
there is no a priori bound on its minimal value.
\item the density of trapped function depends only on the boundary conditions $f_0^\pm$, and not on the potential $V$. 
\end{itemize}
 
In some sense, this 
feature illustrates the degeneracy of the quasineutral Vlasov equation, compared 
to the classical Vlasov-Poisson.  For instance, the fact that $f_T$ is independent of $V$ is due to the fact we pass from \eqref{eq:vla1D} to \eqref{eq:vlapoi1D} by replacing a ``local'' equation by a Poisson equation. More precisely, the corresponding problem in the Vlasov-Poisson case  is the
construction of the so called ``BGK waves'', which was performed in 
the pioneering work of Bernstein, Greene and Kruskal \cite{BGK}. Consider the
problem
\begin{equation} \label{eq:vlapoi1D}
\begin{cases}
v  \, \partial_x f_\ep - \partial_x V_\ep \partial_v f_\ep = 0, \\
- \ep^2 \pa^2_x V_\ep = \int f_\ep \, dv - 1, 
\end{cases} 
\end{equation}
on the space $\Omega = [0,1]\times \R$, with the same boundary 
conditions~\eqref{eq:BC}.
Building on \cite{BGK}, we can construct numerous solutions $(f_\ep,V_\ep)$ 
of~\eqref{eq:vlapoi1D}-~\eqref{eq:BC},  but in this case, the density of trapped 
particles $f_{T,\ep}$ always depends on the potential $V_\ep$, and this one 
cannot be completely arbitrary: for instance it has to be bounded from below by 
some constant $C\ep^{-2}$.

\bigskip

BGK waves play an important role in the large time dynamics of the Vlasov-Poisson equation: we refer on this topic to the works of Lin and Zheng \cite{LZ1,LZ2}. The abundance of BGK waves for the quasineutral Vlasov equation \eqref{kineuler-intro} shown in Theorem \ref{thm:1} suggests that the dynamics for \eqref{kineuler-intro} is very rich.

\subsection{The case of the Vlasov-Poison equation for ions.}

The following Vlasov-Poisson equation
\begin{equation}
\left\{
    \begin{array}{l}
\ds   \partial_t f_\epsilon + v \, \partial_x f_\epsilon   -\partial_x V_\epsilon \, \partial_v f_\epsilon =0,  \\
\ds  \alpha V_\epsilon - \epsilon^2 \partial_{x}^2 V_\epsilon = \rho_\epsilon
-1,
  \end{array}
  \right.
\end{equation}
 for some $\alpha >0$, is also worth studying.  Such a model, which we shall call
\emph{Vlasov-Poisson equation for ions} is often encountered in plasma physics:
it aims at describing the dynamics of ions in a one dimensional plasma, 
when the electrons are assumed to be adiabatic (or
massless), so that they have reached a thermodynamic equilibrium. When $\ep\to
0$, one formally obtains the Vlasov-Dirac-Benney equation \eqref{VDB}.
This will also be studied later in this paper. { We will explain in the last 
section how to adapt most of our results to that case.}

\bigskip

As a final remark, let us mention that it seems possible to adapt all the results stated here to larger dimensions. 
It may be the goal of some future works.

\subsection{Organization of the following of the paper.}

The following is dedicated to the proofs of our main results.
In Section \ref{sec:nonconv}, we deal with the unstable case and  provide a proof of Theorem \ref{thmGrenier-revisited}.
In Section \ref{sec:conv} we study briefly the properties of the Casimir 
functional, we heuristically derive the equation for the correctors that are 
necessary in the ill-prepared case, and finally prove the stability
Theorems~\ref{thm} and~\ref{thm-IP}.
In Section \ref{sec-EQ}, we prove that the only solutions to 
\eqref{kineuler-intro} satisfying  locally everywhere the  monotonicity and 
symmetry condition~\eqref{SymGene}  are necessarily stationary. 
Then, in Section \ref{sec-sta1}, we turn to the construction of BGK waves for \eqref{kineuler-intro} and prove Theorem \ref{thm:1}. Finally, we explain how the results of this paper can be adapted to handle the Vlasov-Poisson equation for ions with adiabatic electrons in Section \ref{sec:mod}.

%
\section{Unstable case: proof of Theorem \ref{thmGrenier-revisited}}
\label{sec:nonconv}

In this section we will first give some elements about the linearized Vlasov-Poisson equation~\eqref{quasi} around homogeneous equilibria, and explain the relevancy of the Penrose criterion of Definition~\ref{def:Pen-first}.
Then, we shall provide a complete proof of Theorem~\ref{thmGrenier-revisited}.

That proof will rely on a instability theorem (Theorem~\ref{NLinsta}) for the original Vlasov-Poisson equation~\eqref{eq:VP1D} that is interesting by its own. It is proved in subsection~\ref{sec-insta}.
Later, we introduce the $\delta'$-condition, and explain the necessary modifications to perform in the proof of Theorem~\ref{NLinsta}. 

We will end this section by proving Proposition~\ref{suff-delta}.

\subsection{Linearized Vlasov-Poisson equation and Penrose criterion}
\label{sec:Penrose}

In this paragraph, we work on $ \T_M \times \R$, where we recall that $\T_M:= \R/(M\Z)$ and $M>0$.
Given some smooth homogeneous equilibrium $\mu(v)$, we study the linearized Vlasov-Poison equation around $\mu$:
\begin{equation}
\label{VPllin}
\left\{
    \begin{array}{l}
\ds   \partial_t f + Lf:=   \partial_t f + v \, \partial_x f   -\partial_x V 
\, \mu' =0,  \\
\ds   -\partial_x^2 V = \int f dv,  
  \end{array}
  \right.  \text{for }\; t\geq 0, x\in \mathbb{T}_M,  \, v\in \mathbb{R},
\end{equation}
with $D(L)= \{f \in L^1_{x,v}(\T_M \times \R), \, \int f \, dvdx =0, \, Lf \in L^1_{x,v}(\T_M \times \R)\}$.

We shall rely on the description of the spectrum of the linearized 
Vlasov-Poisson equation, which was performed by Degond in~\cite{Deg}.
We gather some useful information from~\cite[Theorem 1.1]{Deg} in the following 
proposition.

\begin{prop}\label{prop:Deg}
 Assume that $\mu \in W^{3,1}$. 
Consider the following dispersion relation, defined for $\lambda \in \mathbb{C} \setminus i \R$ and $n \in \Z^*$:
\begin{equation}
\label{dispersion}
D(n,\lambda) = 1 - \frac{M^2}{(2\pi n)^2} \int_\R \frac{\mu'(v)}{v - i 
\frac{M \lambda }{2\pi n }} \, dv.
\end{equation}
$i)$ The spectrum of $L$ is given by
$$
\sigma(L)  = i \R \cup \{ \lambda \in \mathbb{C}, \, \exists n \in \Z^*, D(n,\lambda) =0\}.
$$
It is symmetric with respect to the real and imaginary axis.
Moreover, $\lambda$ is an eigenvalue of $T$ if and only if $\lambda=0$ or if 
there exists $n \in \Z^*$ such that $D(n,\lambda)=0$.  

Moreover, there exists $\omega_0>0$ such that $\sigma(L) \subset \{\lambda \in \C, |\Re \lambda |\leq \omega_0\}$.

\bigskip \noindent
$ii)$ If $\Re 
\lambda >0$, the set of solutions $n$ of the equation $D(n,\lambda)=0$ is finite 
and denoted by $\{n_1, \cdots, n_p\}$. In addition, a basis of the eigenspace associated 
to $\lambda$ is given by
\begin{equation} \label{eigen-form}
\left\{ e^{i \frac{2\pi n_j}{M} x} \frac{\mu'(v)}{v - i 
\frac{M \lambda }{2\pi n }}
, \, j = 1, \cdots, p\right\}.
\end{equation}
In particular, these eigenfunctions associated to 
$\lambda$ belong to $W^{s-1,1}$ if $\mu \in W^{s,1}$,
and their associated spatial densities are equal to $\frac{(2\pi 
n)^2}{M^2} e^{i \frac{2\pi n_j}{M} x}$.
 
 \bigskip \noindent
  $iii)$ Assume that $\mu$ is smooth. For all $\Gamma >  
\max \{ \Re \lambda, \, \, \lambda \in \sigma(L) \}$ and all
all $s \in \N$, there exists $C_\Gamma^s \geq 1$ such that the 
following holds, for all 
$h_0 \in W^{1,s}(\T_M \times \R)$,
\begin{equation} \label{bound-lin}
\forall t \geq 0, \quad \| e^{-t L}h_0 \|_{W^{s,1}} \leq C_{\Gamma}^s \,
e^{\Gamma t} \, \| 
h_0 \|_{W^{s,1}}.
\end{equation}

\end{prop}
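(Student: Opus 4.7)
The plan is to exploit translation invariance in $x$ and Fourier-decompose: writing $f = \sum_{n \in \Z} f_n(v) e^{2i\pi nx/M}$, the operator $L$ diagonalises as $L = \bigoplus_n L_n$. The zero mode $L_0$ vanishes on the admissible subspace $\{\int f_0\,dv = 0\}$, accounting for $0 \in \sigma(L)$. For $n\neq 0$, Poisson's equation gives $V_n = (M/2\pi n)^2\rho_n$ with $\rho_n := \int f_n\,dv$, so
\[
L_n \phi = i\tfrac{2\pi n}{M}\Bigl[v\,\phi - \bigl(\tfrac{M}{2\pi n}\bigr)^2 \mu'(v)\int \phi\,dv\Bigr],
\]
that is, multiplication by $i(2\pi n/M)v$ plus a rank-one perturbation. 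This structure is the backbone of all four assertions.

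For (i)--(ii), the multiplication piece alone has essential spectrum $i\R$ and no eigenvalues, and a rank-one perturbation preserves the essential spectrum, so $i\R \subset \sigma(L_n)$. For an eigenvalue $\lambda \notin i\R$, the equation $(L_n - \lambda)\phi = 0$ algebraically forces
\[
\phi(v) = C\,\frac{\mu'(v)}{v - iM\lambda/(2\pi n)}
\]
with $C \propto \rho_n$, and the compatibility $\int\phi\,dv = \rho_n$ is precisely $D(n,\lambda) = 0$; the density is then the pure harmonic $(2\pi n/M)^2 e^{2i\pi nx/M}$ as stated, and $W^{s-1,1}$ regularity follows since the factor $1/(v - iM\lambda/(2\pi n))$ is smooth and bounded once $\Re\lambda \neq 0$. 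The symmetries of $\sigma(L)$ come from the identities $D(-n,-\lambda) = D(n,\lambda)$ (algebraic) and $\overline{D(n,\lambda)} = D(-n,\bar\lambda)$ (using that $\mu'$ is real). The strip property follows from the elementary lower bound $|v - iM\lambda/(2\pi n)| \geq M|\Re\lambda|/(2\pi|n|)$, which yields $|D(n,\lambda)| \geq 1 - M\|\mu'\|_{L^1}/(2\pi|n||\Re\lambda|)$, excluding zeros uniformly in $|n|\geq 1$ as soon as $|\Re\lambda|$ exceeds some $\omega_0$.

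For the finiteness claim in (ii), fix $\lambda$ with $\Re\lambda > 0$: the prefactor $(M/2\pi n)^2 \to 0$ as $|n|\to\infty$, while the integral $\int \mu'(v)/(v - iM\lambda/(2\pi n))\,dv$ remains bounded -- the pole moves toward $0$ but keeps imaginary part $M\Re\lambda/(2\pi n)$ of definite sign, and a Sokhotski-Plemelj decomposition combined with $\mu \in W^{3,1}$ yields convergence of the integral to the finite limit $\mathrm{p.v.}\!\int \mu'(v)/v\,dv + i\pi\mu'(0)$. Hence $D(n,\lambda) \to 1$ and only finitely many integers $n$ can annul it.

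The main obstacle is (iii), the semigroup estimate in Sobolev norms with constants uniform in the mode index. The cleanest route is mode-by-mode: use the Duhamel formula along the free-streaming characteristics to reduce the evolution of $\rho_n$ to a scalar Volterra equation of convolution type, $\rho_n = S_n + K_n * \rho_n$, with kernel $K_n(\tau) = i(M/2\pi n)\widehat{\mu'}(-(2\pi n/M)\tau)$ and source $S_n(t) = \int e^{-i(2\pi n/M)vt}f_{n,0}(v)\,dv$. The Laplace transform yields $\tilde\rho_n(\lambda) = \tilde S_n(\lambda)/D(n,\lambda)$, and inverting on a vertical contour $\Re\lambda = \Gamma$ placed strictly to the right of $\sigma(L)$ gives a pointwise bound $|\rho_n(t)| \lesssim e^{\Gamma t}\|f_{n,0}\|$. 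Feeding this back into the Duhamel formula produces the $W^{s,1}_v$ control of $f_n$, and summation in $n$ reconstructs the full $W^{s,1}_{x,v}$ bound. The delicate step is making the resolvent bound $1/D(n,\cdot)$ uniform in $n$ on $\{\Re\lambda = \Gamma\}$, which is secured by the strip property of (i); the smoothness of $\mu$ gives enough decay of $\widehat{\mu'}$ at infinity to sum freely in $n$ against Sobolev weights, absorbing all derivative losses into the constant $C_\Gamma^s$.
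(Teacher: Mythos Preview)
The paper does not prove this proposition in detail: for (i)--(ii) it defers to Degond's spectral analysis, which is essentially what you reproduce (the decomposition $L = \bigoplus_n L_n$, the rank-one perturbation of the multiplication operator, the algebraic derivation of the dispersion relation and the eigenfunctions, the symmetries, and the strip bound). Your treatment of these points is correct and aligned with that reference. For (iii) you take a genuinely different route. The paper says only that $L$ generates a strongly continuous semigroup on $L^1$ and that a ``standard bootstrap argument'' upgrades this to $W^{s,1}$; you instead derive the scalar Volterra equation $\rho_n = S_n + K_n * \rho_n$ for each mode, identify $1 - \tilde K_n(\lambda) = D(n,\lambda)$, and invert on the contour $\{\Re\lambda = \Gamma\}$. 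Your approach is more explicit and closer to the Landau-damping literature; it has the advantage of being self-contained and of sidestepping the question of whether the semigroup growth bound coincides with the spectral bound, which the paper's one-line explanation leaves implicit. One refinement is needed: the uniform-in-$n$ lower bound on $|D(n,\lambda)|$ along the contour is not secured by the strip property alone. Your estimate $|D(n,\lambda)| \geq 1 - M\|\mu'\|_{L^1}/(2\pi|n|\,\Gamma)$ disposes of all sufficiently large $|n|$; for the finitely many remaining modes you must also use that each $D(n,\cdot)$ is continuous, nonvanishing on $\{\Re\lambda \geq \Gamma\}$ by the very choice of $\Gamma$, and tends to $1$ as $|\lambda|\to\infty$, hence is bounded away from zero there. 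With both ingredients the uniformity follows and your argument closes.
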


The proof of point $i)$ and $ii)$ in that Proposition is mostly done 
in~\cite[Theorem 1.1]{Deg}. The remarks about the regularity and the spatial 
density in point $ii)$ are plain consequences of the particular form of the 
eigenfunctions, and of the dispersion relation~\eqref{dispersion}.
The estimate~\eqref{bound-lin} is a consequence of 
the fact that $L$ generates a strongly continuous semi-group on $L^1$ and of a 
standard bootstrap argument.
The regularity assumptions on $\mu$ are not optimal, see~\cite{Deg} for details.

Remark that $D(n,\lambda)=0$ may be rewritten as
\begin{equation}\label{def:G}
G \Bigl( i \frac{M \lambda }{2\pi n } \Bigr) 
= \frac{(2\pi n)^2}{M^2}, \qquad
\text{with}
\quad
G(\zeta):= \int_\R \frac{\mu'(v)}{v - \zeta} \, dv.
\end{equation}
And in particular, there exists a eigenvalue $\lambda$ with positive real 
part if and only if
\begin{equation} \label{cond:ens}
\Bigl\{ \; \Bigl(\frac{2\pi n}{M}\Bigl)^2,  \; n \in \N \Bigr\} \bigcap 
G(\Im^+) \neq \emptyset, \quad \text{where} \quad
\Im^+ := \{ z \in \C, \;\text{s.t.} \; \Im z >0\}.
\end{equation}
As Penrose remarks in~\cite{Pen}, the linear instability is therefore linked to the values 
taken by $G$ on $\Im^+$, and since $G$ is holomorphic, we can use some powerful 
theorems of complex analysis to simplify the condition~\eqref{cond:ens}. Before 
going on, we shall gather some properties of $G$ in the following lemma.
\begin{lem} \label{lem:propG}
Assume that $\mu \in W^{3,1}$. Then the function $G$ is holomorphic on $\Im^+$.
It can be extended to the real line by
\begin{equation} \label{extendG}
\forall \xi \in \R, \quad 
G(\xi) := P.V. \int_\R  \frac{\mu'(v)}{v - \xi} \, dv + i \mu'(\xi),
\end{equation}
where $P.V.$ means that the integral has to be understood as a principal value.
Moreover, the extended function $G$ is uniformly continuous and $G(\xi)$ goes 
to zero when $|\xi|$ goes to infinity. 
\end{lem}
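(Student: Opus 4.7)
\emph{Step 1: holomorphicity on $\Im^+$.} On any strip $\{\Im \zeta \geq \eta_0\}$ with $\eta_0 > 0$, the integrand $\mu'(v)/(v-\zeta)^k$ is dominated by $|\mu'(v)|/\eta_0^k$, which lies in $L^1$ for $k = 1, 2$ since $\mu' \in L^1$. Dominated convergence gives both continuity and differentiability of $G$ under the integral sign, and combined with the holomorphy of each slice $\zeta \mapsto 1/(v-\zeta)$ this yields the Cauchy--Riemann equations on $\Im^+$.

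\emph{Step 2: extension to $\R$.} This is the classical Plemelj--Sokhotski jump formula. For $\zeta = \xi + i\eta$ with $\eta \to 0^+$, I would split
$$\frac{1}{v - \xi - i\eta} = \frac{v - \xi}{(v-\xi)^2 + \eta^2} + i\,\frac{\eta}{(v-\xi)^2 + \eta^2}.$$
The second summand is $\pi$ times the Poisson kernel, and its integral against $\mu'$ converges to $\pi \mu'(\xi)$ by continuity of $\mu'$. For the first summand, add and subtract $\mu'(\xi)$ in the numerator; since the odd kernel $(v-\xi)/((v-\xi)^2+\eta^2)$ integrates to zero over symmetric intervals centered at $\xi$, the limit equals the absolutely convergent integral $\int(\mu'(v)-\mu'(\xi))/(v-\xi)\,dv$, which is the principal value appearing in~\eqref{extendG}. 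Absolute convergence near $v = \xi$ rests on the Lipschitz bound $\|\mu''\|_\infty < \infty$, which comes from the one-dimensional Sobolev embedding $W^{3,1}(\R) \hookrightarrow C^2_b(\R)$.

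\emph{Step 3: uniform continuity and decay.} The imaginary part $\mu'$ is uniformly continuous on $\R$ (since $\mu'' \in L^\infty$) and vanishes at infinity (since any uniformly continuous $L^1$ function does). For the real part I would work with $R(\xi) := \int (\mu'(v)-\mu'(\xi))/(v-\xi)\,dv$. Uniform continuity is obtained by splitting into $|v-\xi|\leq 1$ (integrand bounded by $\|\mu''\|_\infty$) and $|v-\xi|>1$ (integrand bounded by $|\mu'(v)| + |\mu'(\xi)|$, integrable against $1/|v-\xi|$ thanks to $\mu' \in L^1$), then applying dominated convergence as $\xi$ varies. For decay of $R(\xi)$ as $|\xi| \to \infty$, the near-diagonal contribution is bounded by $2\sup_{|w-\xi|\leq 1}|\mu''(w)|$, which tends to $0$ because $\mu''$ is uniformly continuous and lies in $L^1$; the far contribution is split further into $|v|\leq R_0$ (bounded by $C\|\mu'\|_1/|\xi|$ when $|\xi|\geq 2R_0$) and $|v|>R_0$ (bounded by the tail of $\mu'$, small for large $R_0$). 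The step I expect to be most delicate is precisely this decay of the principal value part, where the local singularity at $v = \xi$ interacts with the global tail and must be controlled by an adaptive domain split; but no tool beyond dominated convergence, the cancellation $\int \mu'(v)\,dv = 0$, and one-dimensional Sobolev embeddings is required given the $W^{3,1}$ hypothesis.
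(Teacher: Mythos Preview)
The paper does not actually prove this lemma: it only refers to Penrose~\cite{Pen} and remarks that uniform continuity follows from continuity on $\R$ after ``a careful estimate of the quantities involved''. Your proposal therefore supplies what the paper omits, and the overall strategy --- differentiation under the integral for holomorphy, the Plemelj--Sokhotski limit for the boundary values, and a near/far split for continuity and decay --- is the standard and correct one.

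There is, however, a small technical slip that recurs in Steps~2 and~3. You write the real part as the globally defined integral
\[
R(\xi)=\int_\R \frac{\mu'(v)-\mu'(\xi)}{v-\xi}\,dv
\]
and call it absolutely convergent. It is not: for $|v-\xi|$ large the integrand behaves like $-\mu'(\xi)/(v-\xi)$, which is not in $L^1$ unless $\mu'(\xi)=0$. The same issue reappears in your Step~3 bound ``$|\mu'(v)|+|\mu'(\xi)|$, integrable against $1/|v-\xi|$'' on $\{|v-\xi|>1\}$: the second term is not integrable there. The fix is routine and you already have the ingredient at hand: subtract $\mu'(\xi)$ only on a bounded symmetric window, so that
\[
P.V.\int_\R\frac{\mu'(v)}{v-\xi}\,dv
=\int_{|v-\xi|\le 1}\frac{\mu'(v)-\mu'(\xi)}{v-\xi}\,dv
+\int_{|v-\xi|>1}\frac{\mu'(v)}{v-\xi}\,dv,
\]
where both pieces are now absolutely convergent (the first by the Lipschitz bound on $\mu'$, the second because $\mu'\in L^1$ and $1/|v-\xi|\le 1$). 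With this decomposition your continuity and decay arguments go through unchanged.

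One incidental remark: your Poisson-kernel computation correctly produces $i\pi\mu'(\xi)$ for the imaginary part, whereas the stated formula~\eqref{extendG} reads $i\mu'(\xi)$; this is a typo in the statement, not an error in your argument.
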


We shall not prove this lemma. We refer to~\cite{Pen} for some explanations. 
The uniform continuity is not stated in the later reference,  but it 
is a consequence on the continuity of $G$ on $\R$, which 
can be obtained after a careful estimate of the quantities involved.

\bigskip

In view of standard results of complex analysis, a complex $z$ belongs to  
$G(\Im^+)$ if and only if it is  encircled clockwise  by the curve $G(\R)$, covered form 
$-\infty$ to $+\infty$. In view of~\eqref{cond:ens}, there will be a 
eigenvalue with positive real part if and only if the curve $G(\R)$  encircles clockwise  a 
value $\bigl(\frac{2\pi n}{M}\bigl)^2$, for some $n \in \N$. But, as Penrose 
explains \cite{Pen}, it is possible only if the curve $G(\R)$ crosses the half-line~$
\bigl( \bigl[\frac{2\pi n}{M}\bigl]^2, + \infty \bigr)$ from below at some 
point $G(\xi_0)$ for some $\xi_0 \in \R$. In view of the 
definition~\eqref{extendG}, $\xi_0$ has to be a strict and local minimum of 
$\mu$, in the precise sense given in Definition~\ref{def:Pen-first}. Moreover, 
the real part of $G(\xi_0)$ should be greater than 
$\bigl(\frac{2\pi n}{M}\bigl)^2$. But since $\mu'(\xi_0)=0$, the principle 
value in~\eqref{extendG} is a true integral, and it leads after an integration 
by parts to the necessary condition~\eqref{Peneven}. 

Nevertheless, this necessary condition is not sufficient: indeed, the curve $G(\R)$ can 
cross by above the half-line of positive real numbers before  surrounding  
one of the requested values.  As mentioned in the introduction, it can be shown 
that the condition~\eqref{Peneven} is sufficient if the profile $\mu$ is 
symmetric with respect to the minimum (see \cite{GS}). But this symmetry condition is quite 
restrictive from the physical point of view. For instance, it is violated for what 
is usually called  a ``bump on tail'' profile:  a small bump added in the tail (for 
large velocities) of a given stable profile. 

However, in the limit of large boxes (or equivalently in the limit of small 
Debye length), we will prove in the following proposition that the Penrose 
criterion of Definition~\ref{def:Pen-first}  becomes a necessary and 
sufficient conditions for the existence of a eigenvalue with positive real part. 
\begin{prop} \label{prop:Penlim}
Assume that $\mu \in W^{3,1}$ satisfies the Penrose criterion of 
Definition~\ref{def:Pen-first}. Then there exists a $\eta>0$ such that if $\frac 
1 M < \eta$, then the linearized operator $L$ on $\T_M \times \R$ possesses an 
eigenvalue  $\lambda$ with $\Re \lambda >0$. Moreover, for such an eigenvalue, there exists an 
associated eigenfunction $h^\lambda$ of the form~\eqref{eigen-form}. In particular, it has some spatial inhomogeneity: precisely 
its associated density $\rho^\lambda$ has a non zero real part. Finally, if $\mu \in W^{s,1}$, then $h^\lambda \in W^{s-1,1}$.
\end{prop}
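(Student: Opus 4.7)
The strategy is to solve the dispersion relation~\eqref{def:G}, namely $G(\zeta) = (2\pi n/M)^2$ for some $\zeta \in \Im^+$ and some $n \in \N^*$, and then invoke Proposition~\ref{prop:Deg} to read off the eigenfunction and its properties.

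The first step is to show $G(\bar v) > 0$ at the Penrose minimum $\bar v$. Since $\mu'(\bar v)=0$, the imaginary boundary term in~\eqref{extendG} vanishes and $G(\bar v) \in \R$; integration by parts of the principal value---with the inner boundary terms at $v = \bar v \pm \epsilon$ controlled by $\mu'(\bar v)=0$ and the outer ones by the decay of $\mu$ at infinity---yields
\[
G(\bar v) \;=\; \int_{\R} \frac{\mu(v)-\mu(\bar v)}{(v-\bar v)^2}\, dv,
\]
which is strictly positive by Definition~\ref{def:Pen-first}.

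The main step, essentially the classical Penrose--Nyquist analysis already sketched in the discussion preceding the statement, is to transfer this positivity into the existence of a positive real value inside $G(\Im^+)$. Applying the argument principle on truncated half-disks $\Im^+ \cap \{|z|<R\}$ (the image of the large semicircle contributes negligibly, since $G = O(R^{-2})$ at infinity), one obtains that for any $z_0 \in \C$ off $G(\R)$, the number of preimages of $z_0$ in $\Im^+$ coincides with the winding index around $z_0$ of the closed loop $\Gamma := G(\R)\cup\{0\}$, oriented by increasing $\xi$. By the boundary formula~\eqref{extendG}, $\Im G(\xi)$ has the sign of $\mu'(\xi)$, so $\Gamma$ crosses the real axis \emph{upward} at the positive point $G(\bar v)$, as $\mu'$ changes from negative to positive across the local minimum $\bar v$. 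A signed-crossing count on the horizontal ray $[z_0,+\infty)$, combined with the non-negativity of the preimage count, then shows that the $+1$ contribution from this upward crossing survives on some open interval $I\subset(0,G(\bar v))$; any $z^*\in I$ therefore lies in $G(\Im^+)$.

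Once $z^*>0$ is known to lie in the open set $G(\Im^+)$, an open ball $B(z^*,\delta)\subset G(\Im^+)$ exists by the open mapping theorem. Choosing $\eta>0$ small enough that $M > 1/\eta$ forces the spacing of $\{(2\pi n/M)^2\}_{n\in\N^*}$ near $z^*$ (of order $4\pi\sqrt{z^*}/M$) to be less than $2\delta$, one finds $n\in\N^*$ and $\zeta\in\Im^+$ with $G(\zeta) = (2\pi n/M)^2$. Setting $\lambda := -2\pi i n\zeta/M$ then solves $D(n,\lambda)=0$ with $\Re \lambda = 2\pi n\,\Im\zeta/M > 0$, producing the desired unstable eigenvalue. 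By Proposition~\ref{prop:Deg}(ii), an associated eigenfunction of the form~\eqref{eigen-form} is
\[
h^\lambda(x,v) \;=\; e^{i 2\pi n x/M}\,\frac{\mu'(v)}{v-\zeta},
\]
with associated density $(2\pi n/M)^2 e^{i 2\pi n x/M}$ whose real part is proportional to $\cos(2\pi n x/M)$ and hence not identically zero, establishing the claimed spatial inhomogeneity. The regularity statement $h^\lambda \in W^{s-1,1}$ follows because $\mu\in W^{s,1}$ gives $\mu'\in W^{s-1,1}$ and the multiplier $1/(v-\zeta)$ is $C^\infty$ with all $v$-derivatives uniformly bounded on $\R$ (since $\Im\zeta>0$ so $|v-\zeta|\ge\Im\zeta>0$), after which a Leibniz expansion preserves the Sobolev control. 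The main obstacle is the topological input from Step~2: translating the isolated positivity $G(\bar v)>0$ into the existence of an open set of positive reals in $G(\Im^+)$ requires a careful accounting of all real-axis crossings of $\Gamma$ and of the closure behavior at infinity.
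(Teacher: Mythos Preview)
Your proof is correct and reaches the same conclusion as the paper, but the route through the crucial topological step is genuinely different. The paper avoids the argument principle entirely: after observing that the curve $G(\R)$ crosses the positive real axis from below at $G(\bar v)$, it picks nearby real points $\xi_-<\bar v<\xi_+$ with $\Im G(\xi_-)<0$, $\Im G(\xi_+)>0$, and $\Re G(\xi_\pm)>0$, then uses uniform continuity of $G$ on $\overline{\Im^+}$ to push these three sign conditions a small distance $\eta'$ into the open upper half-plane. The intermediate value theorem on the horizontal segment $[\xi_-,\xi_+]+i\eta'$ then produces a single point $\zeta\in\Im^+$ with $G(\zeta)\in(0,\infty)$, and the open mapping theorem does the rest. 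This is more elementary and sidesteps the bookkeeping you flag in your last sentence---in particular it never needs to locate a real $z_0$ off the (possibly complicated) set $\Gamma\cap\R$, nor to count all crossings globally. Your winding-number argument, by contrast, is the classical Penrose--Nyquist picture: it is more conceptual, gives a global description of $G(\Im^+)\cap\R$, and would generalize more readily if one wanted sharper information on the unstable spectrum; the price is exactly the ``careful accounting'' you mention, which the paper's local IVT argument simply does not need.
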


\begin{proof}[Proof of Proposition \ref{prop:Penlim}]
The proof relies only on elementary considerations. We shall treat only the case where the Penrose instability criterion is satisfied at  a strict minimum. The case of a flat minimum can be handled in a similar fashion.

Choose a minimum point of $\mu$, denoted by $\xi_0$,
satisfying the Penrose criterion of 
Definition~\ref{def:Pen-first}. As said before, it means that the curve $G(\R)$ 
crosses at $\xi_0$ the half-line of positive real numbers, at the point $G(\xi_0)$.  
We set $\ep := \frac12 G(\xi_0)$.
Since the extended function $G$ is uniformly continuous by 
Lemma~\ref{lem:propG}, we can choose some $\eta>0$ such that 
$|G(\xi) -G(\xi')| 
\le \ep$ when $|\xi-\xi'| \le  \eta$. 
We choose 
also two real numbers $\xi_-$ and $\xi_+$ satisfying 
$$\xi_0 -\eta \le \xi_- < \xi_0 < \xi_+ \le \xi_0 + \eta,
\quad \text{and} \quad 
\Im G( \xi_-) < 0, \qquad 
\Im G(\xi_+) >0.
$$
It is possible since $\xi_0$ is a strict local minimum of $\mu$.
Remark that thanks to the definition of $\eta$, they also satisfy
$ \Re G(\xi_\pm) \ge \frac12 G(\xi_0)$.

Next, we define $\ep' = \min \bigl(\frac12 G(\xi_0), |\Im G(\xi_-)|, |\Im 
G(\xi_+)|\bigr)$, and  associate to it some $\eta'>0$ by  uniform continuity of $G$. 
Then, we have $ \Re \, G(\xi_- + i \eta) >0$ and $\Im \, G(\xi_- + i \eta) <0$ 
and similarly $ \Re \, G(\xi_+ + i \eta) >0$ and $\Im \, G(\xi_+ + i \eta) 
>0$. Moreover we have $G([\xi_-,\xi_+]+ i  \eta)  \subset \{ \Re z >  0 \}$.
By the intermediate value theorem, it means that there exists some $\xi_1 \in [\xi_-, \xi_+]$ such 
that $G(\xi_1 + i \eta)\in (0,+\infty)$. 

But, since $G$ is holomorphic on $\Im^+$, its image $G(\Im^+)$ is  
open, and we can therefore conclude that $G(\Im^+)$ contains some interval 
$(a,b)$ with $0<a<b$.  Then, one can readily see that for $M$ large 
enough
\begin{equation*}
\Bigl\{ \; \Bigl(\frac{2\pi n}{M}\Bigl)^2,  \; n \in \N \Bigr\} \bigcap 
(a,b) \neq \emptyset,
\end{equation*}
so that condition~\eqref{cond:ens} is satisfied. The existence of a eigenvalue 
with positive part follows, and the claimed properties of the associated 
eigenfunction are a direct consequence of Proposition~\ref{prop:Deg}.
\end{proof}

\subsection{Proof of Theorem~\ref{thmGrenier-revisited}}
\label{subsec:insta}

Let $\mu(v)$ be a smooth profile satisfying the instability criterion of 
Definition~\ref{def:Pen-first}. Using Proposition~\ref{prop:Penlim}, we fix 
$M>0$ large enough such that 
the linearized operator $L$ around $\mu$ on $\T_M \times \R$ possesses a eigenvalue 
$\lambda$ with positive real part.

From now on, we consider only the sequence $\eps_k = \frac{1}{kM}$, for $k \in 
\mathbb{N}^*$,  but we shall forget the $k$ subscript for readability.

\medskip

{\sl  Step 1. Highly oscillating data.}

We consider $\eps M$-periodic (in $x$) solutions  to~\eqref{quasi}. 
Precisely, we look at solutions to the system
\begin{equation}
\label{quasi-osci}
\left\{
    \begin{array}{l}
\ds   \partial_t \tilde{f}_\epsilon + v \, \partial_x \tilde{f}_\epsilon   
-\partial_x \tilde{V}_\epsilon \, \partial_v \tilde{f}_\epsilon =0,  \\
\ds   - \ep^2 \partial_x^2 \tilde{V}_\epsilon = \int \tilde{f}_\epsilon dv -1, 
  \end{array} \right.
 \text{for  }t\geq 0, \,  x\in \mathbb{T}_{\eps M}:= \R/(\eps M\Z), \, v\in \mathbb{R}.
\end{equation}

We can canonically obtain from $\tilde{f}_\eps$ a solution $f_\eps$ to 
\eqref{quasi} by ``gluing'' together $(\eps M)^{-1}$ copies of $\tilde{f}_\eps$.

\bigskip

{\sl  Step 2. Rescaling.}

We perform the change of variables $(t,x,v) \to \left(\frac{t}\eps, 
\frac{x}\eps,v\right)$. In other words, we consider $(g_\eps,\varphi_\eps)$ such 
that:
\begin{equation}
\label{ch-scaling}
\tilde{f}_\eps(t,x,v) = g_\eps\left(\frac{t}\eps,\frac{x}\eps,v\right), \quad 
\tilde{V}_\eps(t,x) = \varphi_\eps\left(\frac{t}\eps,\frac{x}\eps\right).
\end{equation}
This leads to the study of the following system, which is now \emph{independent of $\eps$}, and posed for $t\geq 0$, $x\in \mathbb{T}_M$, $v\in \mathbb{R}$
\begin{equation}
\label{quasi-rescaled}
\left\{
    \begin{array}{l}
\ds   \partial_t g + v \, \partial_x g   -\partial_x \varphi \, \partial_v g 
=0,  \\
\ds   -\partial_x^2 \varphi = \int g dv -1,\\
  \end{array}
  \right.  \text{for }\; t\geq 0, x\in \mathbb{T}_M,  \, v\in \mathbb{R}.
\end{equation}
Remark that the standard Sobolev embedding on $\T_M$ implies a good 
control on the electric field. Precisely, for all 
$s \in \N$,  if $h$ has zero mean on $\T_M \times \R$, 
and  $\varphi$ satisfies $-\partial_x^2 \varphi = \int h dv $, then
\begin{equation}\label{estim:phi}
 \| \partial_x^{s+1} \varphi \|_\infty \le    \| \partial_x^s h \|_1.
\end{equation}
We shall use that estimate at multiple times in what follows.

Linearizing \eqref{quasi-rescaled} around $\mu$, we obtain
\begin{equation}
\label{quasi-rescaled-lin}
\left\{
    \begin{array}{l}
\ds   \partial_t h + Lh=   \partial_t h + v \, \partial_x h   -\partial_x \Psi 
\, \partial_v \mu =0,  \\
\ds   -\partial_x^2 \Psi = \int h dv,  
  \end{array}
  \right.  \text{for }\; t\geq 0, x\in \mathbb{T}_M,  \, v\in \mathbb{R},
\end{equation}
which is exactly the linearized system studied in the section \ref{sec:Penrose}.
\bigskip

{\sl  Step 3. A nonlinear instability result.}

The description of the spectrum obtained in the section \ref{sec:Penrose} 
allows to deduce the following non-linear instability theorem on the rescaled 
system.

\begin{thm}
\label{NLinsta} Assume that the profile $\mu$ satisfies the Penrose instability criterion~\eqref{def:Pen}, the technical condition~\eqref{cond:alpha} (or the condition~\eqref{cond:alpha'})  and belongs to all the $W^{s,1}$ for $s \in \N$. 
Then, there exists a sequence  $(\theta_r)_{r \in \N}$ of positive real numbers  such that for any $S \in \N$, and any $\delta>0$, there exists a solution $(g, \varphi)$ to 
\eqref{quasi-rescaled}  with positive $g$
satisfying $\| g(0)- \mu\|_{W^{S,1}(\T_M \times \R)} \leq \delta$ but such that
\begin{align}
& \theta_0  \le 
\sup_{t \in [0, t_\delta]}\left\| \int_\R g(t,x,v) \,dv - 1\right\|_{W^{-1,1}_{x}(\T_M)}
\le 
\sup_{t \in [0, t_\delta]}\left\| \int_\R g(t,x,v) \,dv - 1\right\|_{L^1_{x}(\T_M)} 
\nonumber
\\
&  \forall r \in \N^\ast, \quad \theta_r \le 
\sup_{t \in [0, t_\delta]}  \left\| \int_{\T_M} \Big(g(t,x,v) \,dx - \mu(v)\Big) \,dx\right\|_{W^{-r,1}_v},
\label{multi-insta}
\end{align}
with,  for a fixed $S$, $t_\delta = O(|\log \delta |)$ as $\delta \rightarrow 0$.
\end{thm}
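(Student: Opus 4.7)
The plan is to run a standard nonlinear instability argument à la Grenier--Guo--Strauss, built around the single most unstable mode provided by Proposition \ref{prop:Penlim}. Since $M$ has been chosen so that $L$ admits an eigenvalue $\lambda$ with $\gamma := \Re\lambda > 0$, fix such a $\lambda$ having maximal real part among the spectrum of $L$ (it exists because $\sigma(L) \subset \{|\Re z| \le \omega_0\}$). Let $h^\lambda$ denote an associated eigenfunction; by Proposition \ref{prop:Deg}$ii)$ we may take it of the form $h^\lambda(x,v) = e^{i \frac{2\pi n}{M}x}\,\mu'(v)/(v - i M\lambda/(2\pi n))$, whose associated density is a non-zero multiple of $e^{i \frac{2\pi n}{M}x}$, and whose regularity in $W^{s-1,1}$ is controlled by that of $\mu$. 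Set $H := \Re h^\lambda$ and take as initial datum
\begin{equation*}
g(0,x,v) = \mu(v) + \delta\, H(x,v).
\end{equation*}
The nonnegativity of $g(0)$ is guaranteed, for $\delta$ small enough, by the $\delta$-condition \eqref{cond:alpha}: indeed $|H(x,v)| \lesssim |\mu'(v)|/(1+|v|) \lesssim \mu(v)$ pointwise. The $\delta'$-condition will allow the same conclusion for profiles that vanish, up to minor modifications (choosing a cut-off version of $H$ concentrated where $\mu>0$).

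Writing $g = \mu + u$, the equation for $u$ reads $\partial_t u + L u = N(u)$ with $N(u) := \partial_x\Psi[u]\,\partial_v u$. Duhamel's formula gives
\begin{equation*}
u(t) = \delta\, e^{tL} H + \int_0^t e^{(t-s)L} N(u(s))\, ds.
\end{equation*}
Fix any $\Gamma \in (\gamma, 2\gamma)$ and use \eqref{bound-lin} at the Sobolev order $S+1$. The nonlinear term is quadratic and tame: combining \eqref{estim:phi} with standard product estimates, one has $\|N(u)\|_{W^{S,1}} \le C\,\|u\|_{W^{S+1,1}}^{2}$ for $S \ge 1$. A continuity (bootstrap) argument, feeding in the ansatz $\|u(t)\|_{W^{S+1,1}} \le 2C_\Gamma^{S+1}\,\delta\, e^{\Gamma t}$, shows that the nonlinear remainder $R(t) := u(t) - \delta e^{tL}H$ obeys $\|R(t)\|_{W^{S,1}} \lesssim \delta^{2} e^{2\Gamma t}$, hence $R(t)$ remains negligible in front of the linear part as long as $\delta\, e^{\gamma t} \le \kappa$, for some small absolute constant $\kappa>0$. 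This defines the instability time $t_\delta := \gamma^{-1} \log(\kappa/\delta) = O(|\log \delta|)$, and on $[0,t_\delta]$ we have both a lower and an upper bound of the form $u(t) = \delta e^{tL}H + O(\delta^{2}e^{2\gamma t})$ in $W^{S,1}$.

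Now extract the two lower bounds. For the density, the eigenfunction $h^\lambda$ has a spatial density $\rho^\lambda$ which is a nonzero multiple of $e^{i\frac{2\pi n}{M}x}$; thus $\int e^{tL}H\, dv$ has $L^{1}_{x}$-norm bounded below by a fixed positive constant for all $t\ge 0$, and its $W^{-1,1}_{x}$ norm is likewise bounded below (the mode has nonzero frequency, so the two norms are comparable up to $M$). At $t = t_\delta$, $\delta e^{\gamma t_\delta} = \kappa$, which gives the first line of \eqref{multi-insta} with $\theta_0$ depending only on $\kappa$, $h^\lambda$ and $M$. For the second line, observe that $\int_{\T_M} e^{tL}H\,dx = 0$ because $H$ has zero $x$-average; the velocity-averaged instability must therefore come from the \emph{quadratic} part of $u$. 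Using the Duhamel formula once more,
\begin{equation*}
\int_{\T_M} R(t,x,v)\, dx = \int_0^t \int_{\T_M} e^{(t-s)L}\bigl(\partial_x\Psi\, \partial_v(\delta e^{sL}H)\bigr) \, dx\, ds + O(\delta^{3} e^{3\gamma t}).
\end{equation*}
The quadratic integrand contains a non-trivial zero-in-$x$ component coming from the interaction of the modes $e^{i\frac{2\pi n}{M}x}$ and $e^{-i\frac{2\pi n}{M}x}$; an explicit computation shows that this zero mode grows as $\delta^{2}e^{2\gamma t}\, \Phi(v)$, where $\Phi(v)$ is an explicit $v$-profile built from $\mu'(v)/(v - iM\lambda/(2\pi n))$ and its complex conjugate. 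This $\Phi$ is a fixed nonzero function, smooth but with non-compact Fourier support in $v$, so $\|\Phi\|_{W^{-r,1}_v}>0$ for every $r \in \N^*$. Plugging $t = t_\delta$ gives the second line of \eqref{multi-insta} with $\theta_r := c_r \kappa^{2}\|\Phi\|_{W^{-r,1}_v}$ for a small explicit constant $c_r>0$.

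The main technical obstacle is closing the bootstrap in the high Sobolev norm $W^{S+1,1}$ while simultaneously guaranteeing positivity of the initial datum; the rest is a standard Duhamel dance. Handling the positivity under the more permissive $\delta'$-condition requires a truncation of $H$ near the zeros of $\mu$, at the cost of an arbitrarily small $W^{S,1}$ error; one then repeats the argument verbatim on the truncated eigenmode, which remains an exponentially growing mode up to a perturbation controlled by the same bootstrap.
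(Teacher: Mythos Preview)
Your bootstrap does not close, and this is the central difficulty that the paper's proof is designed around. You assume $\|u(t)\|_{W^{S+1,1}} \le 2C_\Gamma^{S+1}\delta e^{\Gamma t}$ and deduce $\|R(t)\|_{W^{S,1}} \lesssim \delta^2 e^{2\Gamma t}$; but to recover the ansatz you would need $\|R(t)\|_{W^{S+1,1}}$, which Duhamel cannot give you because $N(u)=\partial_x\Psi[u]\,\partial_v u$ loses one $v$-derivative. Iterating the argument one level higher requires $W^{S+2,1}$, and so on indefinitely. The semigroup bound \eqref{bound-lin} is not smoothing, so there is no mechanism to recover the lost derivative.

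This is precisely why the paper follows Grenier's high-order approximation scheme rather than a direct bootstrap. One builds $g^N_{app}=\mu+\sum_{k=1}^N \delta^k h_k$, where the $h_k$ are defined inductively by \eqref{def:hk}; the derivative loss is absorbed because each $h_k$ is controlled in $W^{N-k+1,1}$ (a norm that drops by one at each step), and the construction stops after finitely many steps. The remainder $R^N_{app}$ is then of size $\delta^{N+1}e^{(N+1)\gamma t}$. The error $g-g^N_{app}$ is estimated \emph{only in $L^1$}, directly from the transport structure (multiply by the sign and integrate), which yields growth $e^{(\|\partial_v\mu\|_1+1)t}$ with no loss of derivative since $g^N_{app}$ is an explicit known function. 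The key choice is $N=\lfloor(\|\partial_v\mu\|_1+1)/\gamma\rfloor$, so that $(N+1)\gamma>\|\partial_v\mu\|_1+1$ and the $\delta^{N+1}$ prefactor beats the linear growth; this is the content of \eqref{def:N} and the estimate \eqref{g-gNapp}.

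Your treatment of the velocity instability (zero-$x$-mode coming from the quadratic interaction) and of the positivity issue under the $\delta$- and $\delta'$-conditions is in the right spirit and matches the paper's Steps~5--7. But without the high-order approximation, the error term swamps the signal before it reaches order one, and the argument does not go through.
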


\begin{rque} \label{rem:NLinsta}
 Assume that $\mu$ belongs only to the Sobolev space $W^{r,1}$, for some $r \in \N$. Then, the 
previous result is valid for $s \le r-1$ if the regularity index $r$ satisfies the condition
$$
r > 1+ \frac{\| \partial_v \mu \|_1}{\max \{ \Re \lambda, \lambda \in 
\sigma(L) \} },
$$
where $L$ is the linearized operator defined 
in~\eqref{quasi-rescaled-lin}.
\end{rque}
The proof of Theorem~\ref{NLinsta} follows a method introduced by Grenier 
in \cite{Gr} which is by now standard in instability theory for 
hydrodynamic equations.
We postpone it to the next subsection, after the conclusion of the proof of 
Theorem~\ref{thmGrenier-revisited}.

\bigskip
{\sl  Step 4. Back to the original variables.}

Let $s,N \in \N^*$. Take any $P \in \mathbb{N}$, such that $P> s+ N$. By 
Theorem \ref{NLinsta}, we find for all $\ep$ small enough a solution $(g_\ep, 
\varphi_\ep)$ to \eqref{quasi-rescaled}  satisfying $\| g_\ep(0)- 
\mu\|_{W^{s,1}} \leq \eps^P$ and~\eqref{multi-insta}. 
The associated instability time will be denoted $t_\eps = O(|\log \eps |)$, and 
the density by  $ \Lambda_\eps(t)= \int_\R g_\ep(t,x,v) \,dv$.

Next, a  consequence of the $\ep M$-periodicity of $f_\ep$ and of the change of variable~\eqref{ch-scaling} is that (at any time $t$):
\begin{equation} \label{W1p-scaling} 
\begin{aligned}
\| \rho_\ep -1 \|_{L^1} &= \frac1M \| \Lambda_\ep -1 \|_{L^1(\T_M)}, 
 \qquad \| \rho_\ep -1 \|_{W^{-1,1}} \ge  \frac{\eps}{C \, M} \| \Lambda_\ep -1 \|_{W^{-1,1}(\T_M)},  \\  
 \| f_\ep  - \mu \|_{W^{s,1}} &\le \frac{\ep^{-s}}M \| g_\ep -\mu  \|_{W^{s,1}(\T_M \times \R)}  \quad \text{for } \; s  \in \N.
\end{aligned}
\end{equation}
A constant $C>0$ appears in the second inequality of the first line because we are using non-homogeneous Sobolev norms.

Since the velocity variable is not affected by the scaling, we also have
\begin{equation} \label{W-rp-scaling} 
 \| f_\ep - \mu \|_{W^{-r,1}_{x,v}} \ge 
 \left\| \int_\T (f_\ep(t,x,v)-\mu(v))\,dx  \right\|_{W^{-r,1}_v} 
 \hspace{-10pt}
 = \frac1M  \left\| \int_{\T_M} (g_\ep(t,x,v)- \mu(v))\,dx \right\|_{W^{-r,1}_v}.
 \end{equation}
From this, we deduce that (for $\ep$ small enough)
\begin{align*}
& \| {f}_\eps(0)- \mu\|_{W^{s,1}} \leq  \frac{1}M \,
\eps^{P-s} \leq \eps^N, \\
&\frac{\theta_0}{C \,M} \le 
  \sup_{t \in [0, \eps t_\eps]}  \frac1\eps \|\rho_\ep-1 \|_{W^{-1,1}}
\le \sup_{t \in [0, \eps t_\eps]} \| \rho_\eps(t) - 1 \|_{L^1},   
 \\
& \theta_r \le \sup_{t \in [0, \eps t_\eps]} \| f_\eps(t) - \mu \|_{W^{-r,1}}.
\end{align*}
We therefore deduce the first point in $i)$ and $ii)$. 
From the bound from below on the $W^{-1,1}$ norm of $\rho_\eps-1$, using the Poisson equation in \eqref{quasi}, we deduce  the second point of $i)$. 
Finally note that $\eps \, t_\eps = O(\ep \, | \ln \ep|)$. 
We have thus completed the proof of Theorem \ref{thmGrenier-revisited}.

\begin{rque}
Remark that if $\mu$ is analytic, then the initial data $f_\ep(0)$ we build are also analytic, because of $ii)$ in Proposition \ref{prop:Deg}.
Nevertheless, it is not possible to get a similar theorem with the $W^{1,s}$-norm replaced by 
some analytic norm. Indeed, in this case, we rather expect stability, 
at least for short times: see for instance~\cite[Theorem 1.1.2]{Gr96} for a 
stability result with the ``superposition of fluids'' point of view, and also 
the work~\cite{BFJJ} about the well-posedness of the quasineutral equation.

We found it quite interesting to understand what in our proof  prevents us from keeping analytic norms from start to finish. This is precisely due to Step 4., where the rescaling in space (and especially the factor  $\ep^{-s}$ in the above bounds) prevents from getting ``uniform'' in $\eps$ analytic bounds.
\end{rque}

\subsection{Proof of Theorem \ref{NLinsta}}
\label{sec-insta}
We use Proposition \ref{prop:Penlim} and denote by $h_1$ an eigenfunction 
associated to an eigenvalue with maximal real
part of $L$, denoted by $\lambda_1$ (with 
$\Re \lambda_1 >0$), and such that $\rho_1:= \int h_1 \, dv \neq 0$. Up to a 
multiplication by a constant, we may
assume that $\|h_1\|_{L^1}$=1. Note that this implies that $0<\|\rho_1\|_{W^{-1,1}} \leq  \|\rho_1\|_1 \leq 1$.
Then, a good candidate for $g$ is the solution of the Vlasov-Poisson 
equation~\eqref{quasi-rescaled} with initial condition
$$
g(0) = \mu + \delta h_1,
$$
since according to the study of the linearized operator, loosely speaking, this 
solution will remain close (for small time) to
$$g^1_{app}(t) := \mu + \delta e^{\Re \lambda_1 t} h_1,
$$  
and thus ``escape'' from any small neighborhood of $\mu$. But the control of the errror between $g$ and its linear approximation $g^1_{app}$  on a sufficiently large time interval is not so straightforward: Grenier's method for overcoming that difficulty involves 
constructing a convenient high-order approximation of $g$.

Note  that the initial datum $g(0)$ defined above is a priori complex valued, since $h_1$ is. But we will show first that this $g(0)$ satisfies all the requested properties except it is not real, and explain in the last step of the proof, how to obtain from this $g(0)$ a non-negative initial condition with the requested properties.

\bigskip
{\sl Step 1. A formal high order approximation.}
Precisely, we look for a series of functions $g^N_{app}$
satisfying \eqref{quasi-rescaled} up to a small remainder $R^N_{app}$ 
$$
\partial_t g^N_{app} + 
v \, \partial_x g_{app}^N + 
\partial_x V^N_{app} \,\partial_v g^N_{app} = R^N_{app},
$$
where as usual $V^N_{app} := \partial_{xx}^{-1} (\int g^N_{app} \, dv - 1)$. 
The initial condition is the same as $g$ : $g^N_{app}(0) = g(0) = \mu + 
\delta h_1$.

The functions $g^N_{app}$ will be constructed as the partial sum of a 
series, whose terms will be defined by induction in order to decrease the order 
of the remainder $R^N_{app}$ at each step:
$$
g^N_{app}(t,x,v) = \mu(v) + \delta h_1(x,v) e^{ \lambda_1 t} + \sum_{i=2}^N 
\delta^i h_i (t,x,v).
$$
We will also use the notation $h_1(t,x,v) = h_1(x,v) e^{\lambda_1 t}$. Starting 
with $N=1$, we can see that $g_{app}^1 = \mu  + \delta
h_1(x,v) e^{ \lambda_1 t}$ is the solution to
$$
\partial_t g^1_{app} + 
v \, \partial_x g_{app}^1+ \partial_x V^1_{app} \,\partial_v g^1_{app} = 
\delta^2 
E_1 \partial_v h_1 = R^1_{app},
$$
with the notation $E_k := \partial_x \partial_{xx}^{-1} \bigl(\int h_k\,dv 
\bigr)$. In order to find the appropriate value of $h_2$, we can plug $g_2$ in 
the rescaled Vlasov-Poisson equation~\eqref{quasi-rescaled} and get
\begin{align*}
\partial_t g^2_{app} + 
v \, \partial_x g_{app}^2 +  \partial_x V^2_{app} \,\partial_v g^2_{app} = 
\delta^2  ( &\partial_t h_2 + L h_2 +  E_1 \partial_v h_1)\\
& + \delta^3(E_1 \partial_v h_2 + E_2 \partial_v h_1) 
+ \delta^4( E_2 \partial_v h_2).
 \end{align*}
We see that the best choice for $h_2$ and more generally for $h_k$ for $k\ge 2$ 
is to take it as the solution of 
\begin{equation} \label{def:hk}
 \partial_t h_k + L h_k + \sum_{j=1}^{k-1} E_j \partial_v  h_{k-j} =0,
\end{equation}
with $h_{k}(0)=0$ as initial datum. Of course, this can be done only if the 
$h_{j}$ are regular enough, but we will check this fact later. Then, the 
associated 
remainder term $R^N_{app}$ is given by
\begin{equation} \label{remainder}
R_{app}^N=  \sum_{N+1 \le j+j'\le 2N} \delta^{j+j'}E_j \partial_v h_{j'}.
\end{equation}
Remark also that in view of~\eqref{bound-lin} and the form of the source term 
in~\eqref{def:hk} (and up to some regularity issue), we allow $\| h_k \|_1$ to 
grow at most like $e^{k \Re \lambda_1 t }$. This implies that the remainder $\| 
R_{app}^N\|_1$ will grow at most like 
$\delta^{N+1} e^{(N+1) \Re \lambda_1 t}$, for not too large times. 

\bigskip
{\sl Step 2. A heuristic error estimate.}
Our first goal is to obtain good estimates on $\| g - g_{app}^N \|_1$. To 
this end, remark that $(g  - g_{app}^N)$ is solution to
$$
\partial_t (g  - g_{app}^N) + v \partial_x (g  - g_{app}^N) + \partial_x V 
\partial_v (g  - g_{app}^N) =  (\partial_x V_{app}^N - \partial_x V) 
\partial_v g^N_{app}  - R_{app}^N.
$$
If we multiply this equation by $ \mathrm{sign }  (g  - g_{app}^N)$ 
and integrate with respect to $x$ and $v$, we get
\begin{align}
\frac d {dt} \| g - g_{app}^N \|_1 & \le 
\| \partial_x V_{app}^N - \partial_x V \|_\infty 
\| \partial_v g^N_{app} \|_1 + \| R^N_{app}\|_1, \nonumber \\
& \le \| \partial_v g^N_{app} \|_1 \| g - g_{app}^N \|_1 + \| R^N_{app} \|_1.
\label{L1growth}
\end{align}
Assume that we are able to control $\| \partial_v (g^N_{app} - \mu)(t)  \|_1 \le 
1$
on a time interval $[0,T]$. It is reasonable to expect such a control since $\delta$ is small and we 
will show that the $h_k$ are smooth enough. Then, for $t \in [0,T]$, we obtain 
a 
bound
\begin{equation} \label{L1duha}
\| g(t) - g_{app}^N(t) \|_1  \le  \int_0^t  e^{(t-s) (\| \partial_v \mu \|_1 
+1)}   \| R^N_{app} (s) \|_1\,ds.
\end{equation}
If we take for granted the expected growth of the remainder $\| R_{app}^N\|_1 
\lesssim
\delta^{N+1} e^{(N+1) \Re \lambda_1 t}$, we get an estimate
\begin{align*}
\| g(t) - g_{app}^N(t) \|_1 & \lesssim \delta^{N+1} e^{\max((N+1) \Re 
\lambda_1, \| \partial_v \mu \|_1 +1) ) t }, \\
& \lesssim \bigl[ \delta e^{\Re \lambda_1 t }\bigr]^{N+1}
\qquad \text{if }\; (N+1) \Re \lambda_1 > \|\partial_v \mu \|_1 +1.
\end{align*}
The last bound will be very important for the following argument, since it 
allows to compare 
$\| g(t) - g_{app}^N(t) \|_1$ to $\| g_{app}^N(t) - \mu(t) \|_1 \approx 
\delta e^{\Re \lambda_1 t } $, almost independently of the time.  
So from now on, we fix (with the notation $\lfloor\cdot\rfloor$ for the integer 
part)
\begin{equation} \label{def:N}
N := \Big\lfloor  \frac{\|\partial_v \mu \|_1 +1}{\Re \lambda_1} \Big\rfloor.
\end{equation}

In fact, the factor $1$ added to the norm of $\partial_v \mu$ may be replaced by 
any 
positive real number, and that leads to the condition given 
in Remark~\ref{rem:NLinsta}.

\bigskip
{\sl Step 3. A rigorous error estimate.}
By assumption, we know that $\mu \in W^{N+1,1}$. By 
Proposition~\ref{prop:Penlim}, $h_1 
\in  W^{N,1}$, and thanks to its definition, $\| h_1(t) \|_{W^{N,1}} =  \| h_1 
\|_{W^{N,1}} e^{\Re \lambda_1 t}$.
We now show by 
recursion that for all $k \leq N$ there exists a constant $C_k>0$ such that for 
any 
time $t \ge 0$,
$$
\| h_k(t) \|_{W^{N-k+1,1}} \le C_k e^{k \Re \lambda_1 t}.
$$
By construction, this is true for $k=1$. We choose a $\Gamma \in (\Re \lambda_1, 
2\Re 
\lambda_1]$, and we assume the bound holds until rank $k$. Then for 
the rank $k+1$, by the definition~\eqref{def:hk}, the 
estimate~\eqref{bound-lin} on the semi-group generated by $L$, and with the help 
of Duhamel's formula, we get
\begin{eqnarray*}
 \Vert h_{k+1} \Vert_{W^{N-k,1}} &\leq&  \sum_{j=1}^{k} \int_0^t \Vert 
e^{(t-s) L}\bigl( E_j \partial_v h_{k+1-j} \bigr)\Vert_{W^{N-k,1}}ds \\
&\leq&   C_{\Gamma}^{N-k}  \sum_{j=1}^{k}
 \int_0^t  e^{\Gamma(t-s)} \Vert E_j \Vert_{W^{N-k,\infty}} \Vert 
\partial_v h_{k+1-j}\Vert_{W^{N-k,1}} \,ds \\
&\leq&   C_{\Gamma}^{N-k}  \sum_{j=1}^{k}
 \int_0^t  e^{\Gamma(t-s)} \Vert h_j \Vert_{W^{N-k,1}} \Vert 
h_{k+1-j}\Vert_{W^{N-k+1,1}} \,ds \\
&\leq&   C_{\Gamma}^{N-k}   \biggl( \sum_{j=1}^{k}  C_{j} C_{k+1-j} \biggr)    \int_0^t  e^{\Gamma(t-s)}  e^{(k+1)\Re \lambda_1 
s}ds \\
&\leq& C_{k+1}\,  e^{(k+1)\Re \lambda_1 t}.
\end{eqnarray*}
Remark that we have used the Sobolev embedding~\eqref{estim:phi}.
By formula~\eqref{remainder}, we obtain also, for the remainder, the bound
\begin{equation} \label{bound:remainder}
\|R_{app}^N(t)\|_{L^1} \leq C_N' \, \delta^{N+1} e^{(N+1) \Re \lambda_1
t}, \qquad
\text{as long as } \; \delta e^{\Re \lambda_1 t} <1.
\end{equation}

\bigskip
{\sl Step 4.  Instability with complex valued approximation.}
We can now estimate $\| g^N_{app} - g^1_{app} \|_1$ and the term $\| \partial_v g^N_{app} \|_1$ that appears  in~\eqref{L1growth}.  We introduce with $C_N'' =\max_{k \le N} C_k$
\begin{equation*}
\theta_{max} := \frac {\|\rho_1\|_{W^{-1,1}}} {3 \max 
(1,C_N',C_N'')}, \qquad  
t_{max} := \frac1{\Re \lambda_1} \ln \Bigl(\frac{\theta_{max}}\delta \Bigr),
\end{equation*}
so that $\delta e^{\Re \lambda_1 t} \le \theta_{max}$ if and only if $t \le t_{max}$. Then,
\begin{align}
\|  g_{app}^1(t) - g_{app}^N(t) \|_1 
& \le  \sum_{k=2}^N \delta^k \|h_k(t) \|_1 \le 
\sum_{k=2}^N  C_k \bigl[ \delta e^{\Re \lambda_1 t} \bigr]^k \nonumber \\
& \le C''_N  
\frac{ \bigl[ \delta  e^{\Re \lambda_1 t} \bigr]^2}{1 - \delta  e^{\Re \lambda_1 t}}
\le \frac \delta 2 \,  \|\rho_1\|_{W^{-1,1}}\,  e^{\Re \lambda_1 t}, \qquad \text{for } t \le t_{max}. \label{gNapp-g1app}
\end{align}
Similarly,
\begin{align*}
\| \partial_v (g^N_{app} (t)- \mu)\|_1 & \leq 
\sum_{k=1}^N \delta^k \| \partial_v h_k \|_1 
\le \sum_{k=1}^N C_k \delta^k e^{k \, \Re \lambda_1 t} \\
& \le C_N'' \frac{ \delta  e^{\Re \lambda_1 t}}{1 - \delta  e^{\Re \lambda_1 
t}} \le \frac12  
\qquad \text{as long as } t \le t_{max}.
\end{align*}
As a consequence, for $t \le t_{max}$, we can 
apply~\eqref{L1duha} and using estimate~\eqref{bound:remainder} and 
definition~\eqref{def:N}, it comes
\begin{align}
\| g(t) - g_{app}^N(t) \|_1  & \le  C_N' \int_0^t  e^{(t-s) (\| \partial_v \mu 
\|_1 +1)} \, \delta^{N+1} e^{(N+1) \Re \lambda_1 s}  \,ds, \nonumber \\
& \le C_N' \bigl[ \delta e^{\Re \lambda_1 t }\bigr]^{N+1}
= C_N' \bigl[ \theta_{max}]^N \, \delta \, e^{\Re \lambda_1 t }
\le \frac \delta 3 \,  \|\rho_1\|_{W^{-1,1}} \, e^{\Re \lambda_1 t }
. \label{g-gNapp}
\end{align}
Gathering~\eqref{gNapp-g1app} and~\eqref{g-gNapp} we obtain a good control of the error in the approximation of $g$ by $g^1_{app}$:
\begin{equation}
\| g(t) - g_{app}^1(t) \|_1 \le \frac{5 \delta}6 \, \|\rho_1\|_{W^{-1,1}} \, e^{\Re \lambda_1 t }, \qquad \text{for } t \le t_{max}.
\end{equation}
This implies that for $t \le t_{max}$
\begin{align}
\left\| \int g(t) \, dv - 1 \right\|_{W^{-1,1}} & \ge\left\| \int g_{app}^1(t) \, dv  - 
1 \right\|_{W^{-1,1}}  -  \| g(t) - g_{app}^1(t) 
\|_1 \nonumber \\
& \ge \delta \, \|\rho_1\|_{W^{-1,1}} \, e^{\Re \lambda_1 t} 
-  \frac {5 \delta}6  \|\rho_1\|_{W^{-1,1}}\, e^{\Re \lambda_1 t}
=  \frac \delta 6  \|\rho_1\|_{W^{-1,1}}\, e^{\Re \lambda_1 t}. \label{estim-position}
\end{align}
So in particular, at $t=t_{max}$, we have $\left\| \int g(t) \, dv - \mu 
\right\|_1\ge  \frac16 
\theta_{max} \| \rho_1 \|_{W^{-1,1}} $. Thus, with the notation of Theorem~\ref{NLinsta}, 
we can choose 
$\theta_0 = \frac16 \theta_{max} \| \rho_1\|_{W^{-1,1}}$.  

\bigskip

For what concerns the initial condition, for a fixed $s$, we have
$$
\| g(0) - \mu \|_{W^{s,1}} = \delta \, \| h_1\|_{W^{1,s}} = C_s \,  \delta,
$$
since $\mu$ belongs to all the $W^{s,1}$ spaces by assumption and so does $h_1$ by 
Proposition \ref{prop:Penlim}. From this, we see that $\| g(0) - \mu 
\|_{W^{s,1}}$ can be 
made as small as we want, and that in any case the density $\int g(t) \,dv$ will move 
away from $1$ by a distance of at least $\theta_0$ in $W^{-1,1}$-norm, and this 
before a time  of order $\ln \bigl(\| g(0) - \mu \|_{W^{s,1}}^{-1} \bigr)$.

\bigskip
{\sl Step 5. The construction of a real initial condition. }
In order to construct a \emph{real} initial condition with the requested properties, we remark that $\bar h_1$ is also an eigenfunction of $L$, associated to the eigenvalue $\bar \lambda_1$, and thus we choose as new initial condition
\begin{equation} \label{def:tildeg0}
 \tilde g(0) := \mu + \frac \delta 2 \,  [ h_1 + \overline{ h_1}].
\end{equation}
Remark that $\Re h_1 \neq 0$ (recall Proposition~\ref{prop:Penlim}). 
Then, using Grenier's method with this starting point, it can be shown that the solution $\tilde g$ to~\eqref{quasi} starting form $\tilde g(0)$ has the same properties as those of $g$.
The same proof can be performed again (we shall now write it again for the sake of conciseness), by considering similar functions (we will systematically add a tilde when we will refer to them in the following). We just remark 
that we should replace $\| h_1\|_1$ by $\| \Re h_1\|_1$, 
  $\| \rho_1\|_{W^{-1,1}}$ by $\|  \Re \rho_1\|_{W^{-1,1}}$, 
and  that in~\eqref{estim-position}, the term $\int \tilde g^1_{app}(t) \,dv$ will now oscillate;
but the bound by above is still true if $t$ is a multiple of $2\pi$, and this is sufficient to conclude.

In the case were $\mu$ satisfies the $\delta$-condition of Definition~\ref{cond-pos}, then  the particular 
form of $h_1$ given in~\eqref{eigen-form} implies that $  \delta |h_1| \le \mu$ for $\delta$ small enough, and then that the $\tilde g(0)$ defined in~\eqref{def:tildeg0} is non-negative.

 But nothing ensures that the $\tilde g$ constructed above is nonnegative when 
 $\mu$ satisfies only the $\delta'$-condition of Definition~\ref{cond-pos'}. 
 This will require some truncation argument, which will be performed in the next subsection, 
 after the precise statement of the $\delta'$-condition.

 Before, let us now study the instability in $W^{-r,1}_v$ for the averages in $x$.

\bigskip
{\sl Step 6. The instability in $W^{-r,1}_v$ on the average in position.}
By Proposition~\ref{prop:Deg}, $\tilde h_1(t,x,v)$ 
and its associated density $\tilde \rho_1$
are of the form below for some $n \in \Z^\ast$, $\xi \in \C \backslash \R$ and $\kappa>0$
\begin{equation} \label{h1-form}
\tilde h_1(t,x,v) = \frac12 \Bigl(  e^{\lambda_1 t + i \frac{2\pi n}{M} x} \, \frac{\mu'(v)}{v + \xi}+ 
e^{\overline{\lambda_1} t - i \frac{2\pi n}{M} x} \, \frac{\mu'(v)}{v + \overline \xi} \Bigr),
\qquad 
\tilde \rho_1(t,x) = \kappa\,  e^{\Re \lambda_1 t} \cos\left( \Im \lambda_1 t + \frac{2\pi n}{M} x\right). 
\end{equation}

The fact that $\kappa$ is a positive real number is a consequence of the dispersion relation~\eqref{dispersion}.
Remark that $x$-average of $\tilde h_1$ vanishes. Thus, in order to see an instability on the $x$-average we should also study $\tilde h_2$.  But $\tilde h_2$  is solution to
$$
\pa_t \tilde h_2 + L \tilde h_2 +  \tilde E_1\, \pa_v \tilde h_1 =0.
$$
Integrating with respect to $x$, and using that $\int L \tilde h_2 \,dx = 0$, we get
$$
\pa_t \biggl( \int \tilde h_2(t,x,v) \,dx \biggr) = - \int  \tilde E_1(t,x) \, \pa_v \tilde h_1(t,x,v) \,dx.
$$
Using~\eqref{h1-form} and $\pa_x \tilde E_1 = \tilde \rho_1$, we get
 $ \tilde E_1 = \frac{\kappa M}{2\pi n} e^{\Re \lambda_1 t} \sin\left(  \Im \lambda_1 t + \frac{2\pi n}{M} x \right)$, and this allows to calculate the r.h.s. in the last equation. After a short calculation
 and a integration in time, we get
\begin{equation} \label{h2-form}
\int_{\T_M} \tilde h_2(t,x,v) \,dx = 
- \frac{\kappa M}{4\pi n \, \Re \lambda_1} 
\bigl[   e^{2 \Re \lambda_1 t} -1 \bigr]
\ell'(v),
\end{equation}
where $\ell$ is a smooth function defined by
$$
\ell(v) := \Im  \Bigl[ \frac{\mu'(v)}{v + \xi}\Bigr] = 
\frac{\Im \xi \, \mu'(v)}{(v + \Re \xi)^2 + (\Im \xi)^2}.
$$
In particular, we have for any $r \in \N$
\begin{align*} 
\left\|  \int \tilde h_2(t,x,v) \,dx  \right\|_{W^{-r,1}_v} &  =
\sup_{\| \varphi \|_{W^{r,\infty}} \le 1} \int \tilde h_2(t,x,v) \varphi(v) \,dx \,dv \\
& \ge
\frac1{\|\ell  \|_{W^{r+1,\infty}}}  \int \tilde h_2(t,x,v)  \ell'(v) \,dx \,dv \\
& = 
\frac {\kappa M \| \ell'\|_2^2} {4\pi n \, \Re \lambda_1 \,\|\ell  \|_{W^{r+1,\infty}}} 
\bigl[   e^{2 \Re \lambda_1 t} -1 \bigr] =: c_r'  \bigl[   e^{2 \Re \lambda_1 t} -1 \bigr].
\end{align*}
In particular, remark that since $c_r'$ is a well-defined constant since 
$\ell$ is as smooth as $\mu'$ and also
$\| \ell'\|_2$ is finite since $\mu'' \in L^1 \cap L^\infty$. 
Therefore, the previous bound by below leads to
$$
\left\|  \int \Big(\tilde g^2_{app}(t,x,v)  - \mu(v)\Big) \,dx \right\|_{W^{-r,1}_v} 
\ge c_r' \delta^2 \, \bigl[   e^{2 \Re \lambda_1 t} -1 \bigr].
$$
Starting from this inequality, the strategy of the Step 4 can be performed again, and we can obtain (up to some redefinition of $\theta_{max}$ and $t_{max}$) the conclusion claimed in Theorem~\ref{NLinsta}. In fact all the remainder terms $ \tilde g - \tilde g^N_{app}$
and $\tilde g^N_{app} - \tilde g^2_{app}$ are controlled without integration in $x$, in a stronger topology (namely $L^1$) and at a smaller order (at most $[\delta e^{\Re \lambda_1t}]^3$).

  This conclude the proof in the case where $\mu$ satisfies the $\delta$-condition of Definition~\ref{cond-pos}.


\subsection{The alternative $\delta'$-condition. }
\label{subsec:delta'}

\begin{deft}
\label{cond-pos'}
For any non-negative $C^1$ profile $\mu(v)$, and any $\delta>0$, we define 
\begin{align} \label{def:delta}
V_\delta := \biggl\{v \in \R, \, \text{ s.t. } \frac{|\mu'(v)|}{1+ |v|} > \frac1 \delta \mu(v) \biggr\}
\quad  \subset \R, \\
W_\delta :=  \{w \in \R, \;\text{s.t. } d(w,V_\delta) \le \sqrt \delta \},
\end{align} 
where $d$ stand for the usual distance from a point to a set.
We say that $\mu$ satisfies the $\delta'$-condition 
if for any $n \in  \N$, 
\begin{equation}\label{cond:alpha'}
\liminf_{\delta \rightarrow 0}  \frac{1}{\delta^n} \, \int_{ W_\delta}  | \mu'(v)| \,dv= 0.
\end{equation}
\end{deft}

With that new condition, the conclusion of Theorem~\ref{thmGrenier-revisited} still holds. We provide in the step~$7$ belows the truncation argument  (see also \cite{GS98} for a similar construction).

\bigskip \noindent
{\sl Step 7 of the proof of Theorem~\ref{thmGrenier-revisited}. The construction of a non-negative initial condition. }

 Our goal is now to show how to construct a relevant non-negative initial condition.
Recall that from Proposition~\ref{prop:Deg}, the eigenfunction satisfies
for some $m \in \N$, $\xi \in \C \backslash \R$ and thus for some $C_1 \geq 1$  
$$
| h_1(x,v)| = \left| e^{i m x}\frac{\mu'(v)}{v+ \xi} \right| \le C_1 \frac{|\mu'(v)|}{1 +|v|}.
$$
Remark that the real initial condition $\tilde g(0)$ defined in a previous step may take negative value at any point $v$ where
$$
\delta |h_1|(v) > \mu(v).
$$ 
In order to ``remove'' such problematic points, we introduce a smooth cut-off function $k : \R \rightarrow [0,1]$ such that $k(x) = 0$ for $x \le 0$, $k(x)=1$ when $x \ge 1$, and define
$$
V_\delta' := \biggl\{v \in \R, \, \text{ s.t. } |h_1|(v) > \frac1 \delta \mu(v) \biggr\},
$$
and its $\sqrt \delta$-neighborhood $W_\delta'$. Remark that  $V_\delta'$ is related to $V_\delta$ defined in~\ref{def:delta}: precisely we have $V_\delta \subset V_{C_1 \delta}$ and 
$W_\delta \subset W_{C_1 \delta}$ since $C_1>1$,
so that the property~\eqref{cond:alpha'} is still true with $W_\delta$ replaced by $W_\delta'$.

We also define $G_\delta' := \bigl\{ w, \; \text{ s.t. } d(w,V_\delta') \ge \frac12 \sqrt \delta  \bigr\}$, and denote by $\chi_\delta$ its characteristic function.
Then  we choose $\eta$ a smooth function with total mass one and support in $[-1,1]$, and define for any $\delta >0$,  $\eta_\delta := 2 \delta^{-\frac12} \eta \bigl( \frac12 \sqrt \delta \, \cdot \bigr)$, which has still total mass one and a support in 
$ \bigl[ - \frac12 \sqrt \delta, \frac12 \sqrt \delta \bigr]$. Then we define
$$
h^\delta_1(x,v) := h_1(x,v) \, [\chi_\delta \ast \eta_\delta](v).
$$
Then $h_1^\delta$ satisfies the following properties:
\begin{enumerate}[a)]
\item $h^\delta_1 = h_1$ on $\R \backslash W_\delta'$;
\item $h^\delta_1 = 0$ on  $V_\delta'$, so that $|h^\delta_1| \le \delta \mu$;
\item for any $s \in \N$, 
\begin{equation} \label{Ws1-bound}
\| h_1^\delta \|_{W^{s,1}_{x,v}} \le C \delta^{-\frac s 2}
\| h_1\|_{W^{s,1}_{x,v}}.
\end{equation}
\end{enumerate}
From now on, we fix $n=\max(N+1,S)$. In view of~\eqref{cond:alpha'} satisfied by $\mu$, there exists a sequence of positive number $(\delta_k)_{k \in \N}$ converging to $0$ such that
$$
\lim_{k \rightarrow + \infty} \frac1{\delta_k^n}
\int_{ v \in W_{\delta_k}'}  | \mu'(v)| \,dv = 0.
$$
From now on, we assume that $\delta$ take only the values $\delta_k$ of that sequence, but do not write the indice $k$ for readability. We have using point $a)$ above and \eqref{cond:alpha'}
\begin{equation} \label{L1-bound}
\| h_1 - h^\delta_1 \|_{L^1} \le  C_1 \int_{ v \in W_\delta'} |\mu'(v)| \,dv  = o(\delta^n ),
\end{equation}
But now by interpolation (see for instance \cite{BL}), using~\eqref{Ws1-bound}
and~\eqref{L1-bound}, we get for some $C>0$,
\begin{align*}
\| h_1 - h^\delta_1 \|_{W^{n,1}} & \le 
C \sqrt{
\| h_1 - h^\delta_1 \|_{W^{2n,1}}
\| h_1 - h^\delta_1 \|_{L^1}  } \\
& = \sqrt{o( \delta^{-n} \delta^n )} = o(1).
\end{align*}
Hence, since $S \le n$, $ \| h^\delta_1 \|_{W^{S,1}}$ is bounded independently
of $\delta$.
Thus, if we define, similarly as in~\eqref{def:tildeg0} an initial condition
$$
\tilde g^\delta(0) := \mu + \frac \delta 2 ( h_1^\delta + \overline{ h_1^\delta}  ),
$$
then we have ensured that is non-negative, and that $\| \tilde g^\delta(0) - \mu \|_{W^{S,1}} \le C \delta$. 
We denote $\tilde g^\delta$ the solution to~\eqref{quasi} with initial condition
$\tilde g^\delta(0)$.
To compare $\tilde g^\delta$ to the previous approximation $\tilde g^N_{app}$, we can still apply~\eqref{L1duha} if we add a term for the difference at initial time that does not vanishes anymore:
\begin{align*}
\| \tilde g^\delta(t) - \tilde g_{app}^N(t) \|_1  & \le  \int_0^t  e^{(t-s) (\| \partial_v \mu \|_1  +1)}   \| \tilde R^N_{app} (s) \|_1\,ds +
e^{t (\| \partial_v \mu \|_1  +1)}  \|  \tilde g^\delta(0) - \tilde g(0) \|_1,
\end{align*}
and the previous analysis can still be done since by~\eqref{L1-bound}, as $N+1 \le n$, we have
$$\|  \tilde g^\delta(0) - \tilde g(0) \|_{L^1} = o(\delta^{N+1})  .$$
This concludes the proof.


\subsection{Proof of Proposition~\ref{suff-delta}.}

First remark that the $\delta'$-condition is weaker than the $\delta$-condition, since the later implies that $V_\delta = W_\delta = \emptyset$, for $\delta$ small enough, so that $\int_{W_\delta} |\mu'(v)| \,dv = 0$ and condition~\eqref{cond:alpha'} clearly holds.

\bigskip \noindent $\bullet$
{\sl  Point $i.$ implies the $\delta'$-condition.} 

First, the positivity of $\mu$ implies that for $R>0$ large enough, $V_\delta \cap [-R,R] = \emptyset $, and also $W_\delta \cap [-R,R] = \emptyset $. Then, the upper bound in~\eqref{cond:2.1i} implies that for some constant $c>0$
$$
V_ \delta \subset W_ \delta \subset  \Bigl\{  v, \; \text{s.t. }  |v| \ge  v_\delta \Bigr\},
\qquad \text{with} \qquad 
v_\delta :=  c  \, \delta^{-\frac1{\alpha-1}}.
$$
Next, remark that the lower bound in~\eqref{cond:2.1i} and the smoothness of $\mu$ forbids $\mu'$ to change its sign for $|v|$ large enough, so that, $\mu'$ is necessarily negative for large $v$.
Then, a straightforward integration of the lower bound in~\eqref{cond:2.1i}, leads for $|v|$ large enough to the inequality
$$
\mu(v) \le C e^{- \frac1{C_\alpha} |v|^{\alpha+1}}.
$$
But now
\begin{align*}
\int_{W_\delta} |\mu'(v)| \,dv 
& \le  \int_{-\infty}^{-v_\delta}   |\mu'(v)| \,dv + \int^{+\infty}_{v_\delta}   |\mu'(v)| \,dv 
 = |\mu(v_\delta)| + |\mu(-v_\delta)| \\
&  \le 2C e^{- \frac1{C_\alpha} |v_\delta|^{\alpha+1}}
 \le  2C e^{- c'  \delta^{-\gamma}} 
\quad  \text{with} \quad \gamma := \frac{\alpha+1}{\alpha-1}
\end{align*}
and the quantity in the last r.h.s. is a $o(\delta^n)$ for any $n \in \N$. 

\bigskip \noindent $\bullet$
{\sl  Point $ii.$ implies the $\delta'$-condition.} 

Using the positivity of $\mu$ on the interior of the $(a_i,b_i)$, the upper bound in~\eqref{cond:2.1ii} and arguing similarly to the previous step, we obtain that for some constants $c_i, c_i'>0$
$$
V_\delta \subset \bigcup_{i} \; (a_i, a_i^\delta) \; \cup  \; \bigcup_{i} \;  (b_i^\delta,b_i^\delta),
\quad
\text{where} \quad  
a_i^\delta := a_i + c_i \delta^{\frac1\beta},\quad
 b_i^\delta := b_i   - c_i' \delta^{\frac1\beta}.
$$
It then implies with $\beta':= \max(\beta,2)$ that for some different constants $c_i, c_i'>0$,
$$
W_\delta \subset \bigcup_{i} \; (a_i, \tilde a_i^\delta) \; \cup  \; \bigcup_{i} \;  (\tilde b_i^\delta,b_i^\delta),
\quad
\text{where} \quad  
\tilde a_i^\delta := a_i + c_i \delta^{\frac1{\beta'}},\quad
\tilde  b_i^\delta := b_i - c_i' \delta^{\frac1{\beta'}}.
$$
Moreover, the lower bound in~\eqref{cond:2.1ii}  implies that $\mu'$ does not change its sign closely above $a_i$ (and also closely below $b_i$). Then,
\begin{align*}
\int_{W_\delta} |\mu'(v)| \,dv  
& \le 
\sum_i \int_{a_i}^{\tilde a_i^\delta} |\mu'(v)|\,dv +
\sum_i \int^{b_i}_{\tilde b_i^\delta} |\mu'(v)|\,dv \\
& \le  \sum_i \mu\bigl(\tilde a_i^\delta\bigr) +
\sum_i \mu\bigl(\tilde b_i^\delta\bigr).
\end{align*}
But since $\mu$ is smooth and vanishes at any order at $a_i$ and $b_i$, we have that 
$\mu\bigl(\tilde a_i^\delta\bigr) = o(\delta^n)$ for any $n \in \N$ (we have as well a similar behavior for the $\tilde b_i^\delta$). This implies condition~\eqref{cond:alpha'}.

\bigskip \noindent $\bullet$
{\sl  The case where $\mu$ has zero inside its support.} 

We keep the notation of the previous step, but assume now that $a_1$ is a zero of finite order $m$ of $\mu$. 
 Then the ratio $\frac{\mu'}{(1+|v|)\mu(v)}$ behaves like $\frac c{(v-a_1)}$, and from the definition of $V_\delta$ we see that for some constant $c_1>0$
 $$
 (a_1, a_1^\delta) \subset V_\delta \subset W_\delta,
 \quad 
 \text{with } 
 a_1^\delta:= a_1 + c_1 \delta^{-1}.
 $$
Then, we have 
$$
\int_{W_\delta} |\mu'(v)| \,dv  \ge 
\int_{a_1}^{a^\delta_1} |\mu'(v)| \,dv  = \mu(a_1^\delta) \sim c \delta^m,
$$
for some $c >0$, so that 
 the condition~\eqref{cond:alpha'} is not satisfied for $n > m$. 

\Black

%
%
\section{Stable case: proof of Theorems \ref{thm} and \ref{thm-IP}}
\label{sec:conv}

\subsection{Some properties of the Casimir functional $H_Q$.}
\label{subsec:HQ}

We gather in the following Proposition some useful properties of the Casimir functional.
\begin{prop}\label{prop:HQ}
Let $\mu$ be a $S$-stable profile (See definition~\ref{def:S-stable}) 
associated by~\eqref{condphi}  to a profile $\varphi$. Let $H_Q$ be an associated 
Casimir functional (See Definition~\ref{def:Casimir}) defined thanks to an 
admissible function $Q$. Then :
\begin{itemize}
  \item[i)]  In general, the quantity $H_Q(f)$ is well defined in $\R^+ 
\cup \{ 
+\infty \}$ as the integral of a non-negative measurable function. But if in 
addition, $f$ satisfies
\begin{equation} \label{energyplus}
\int f (1+ v^{2 + \eta}) \,dxdv < + \infty, \quad 
\text{for some } \eta>0,
\end{equation}
then the integrals $\int Q(\mu)$, $\int Q'(\mu) f$ are finite and $\int Q(f)$ 
is bounded from below.

The same also holds if the profile $\mu$ satisfies 
\begin{equation} \label{energyplus-mu}
\int \mu (1+ v^{2 + \eta'}) dv < + \infty, \quad 
\text{for some } \eta'>0,
\end{equation}
and $f \in L^1$ and has finite kinetic energy.  

  \item[ii)] $H_Q$ is convex and non-negative, lower semi-continuous on the 
space of functions $f \in L^1$ with finite kinetic energy. 
  It vanishes only for $f = \mu$.
If moreover $Q$ is uniformly convex:  $Q'' \ge \alpha$, for some $\alpha 
 >0$, then $H_Q$ control the $L^2$ norm 
 $$
 \| f - \mu \|_2^2 \le \frac 1 \alpha H_Q(f).
 $$
 \item[iii)] When $\mu$ is a Maxwellian, $\mu(v) = \frac1{\sqrt{2\pi T}} e^{- 
\frac{|v-u|^2}{2T}}$ for some $T>0$ and $u \in \R$, then the choice $Q(s)= T 
\,s \ln s $ is admissible (up to a constant) and the associated Casimir 
functional $H_Q$ is just the usual relative entropy:
$$
H_Q(f) := H(f \vert \mu) = \int_\R  \ln \frac f \mu  \,f \, dx.
$$
Moreover, the Csisz\'ar-Kullback-Pinsker inequality implies that, for all $f \in L^1_{x,v}$, with $f\geq 0, \int f =1$, we have 
$$
\|f- \mu\|_{L^1_{x,v}}^2 \leq \frac12 H_Q(f).
$$
 \item[iv)] If a sequence $\mu_n$ of $S$-stable profile converges weakly 
towards a Dirac mass $\delta_{\bar v}$, for some $\bar v$ in $\R$, in such a way that
\begin{equation} \label{cond:conv}
\int_\R | v - \bar v |^2 \mu_n(v) \,dv \xrightarrow[n \rightarrow +\infty]{} 0,
\end{equation}
then 
for any compatible sequence $H_n$ of functionals, we have for any bounded $f \ge 
0$ satisfying~\eqref{energyplus}
$$
H_n(f) \xrightarrow[n \rightarrow +\infty]{\mbox{}} \frac{1}{2} \int_{\R\times \T} f(x,v) |v - \bar 
v|^2 \,dv dx.
$$
\end{itemize}
\end{prop}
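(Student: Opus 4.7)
The plan is to address the four points separately, in the order they are stated.

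For point \emph{i)}, the starting observation is that the integrand $Q(f) - Q(\mu) - Q'(\mu)(f-\mu)$ is pointwise nonnegative by convexity of $Q$ (it is the first order Taylor remainder), so $H_Q(f)$ is unambiguously defined in $[0,+\infty]$. To check finiteness of the separate pieces under the extra integrability assumption~\eqref{energyplus}, I would exploit the explicit shape of $\mu$: since $Q' = \varphi^{-1}$ on the range of $\varphi$ and $\mu(v) = \varphi(-|v-\bar v|^2/2)$, one has $Q'(\mu(v)) = -|v-\bar v|^2/2$, so $\int Q'(\mu) f$ is bounded by the kinetic energy of $f$. For $\int Q(\mu)$, a change of variables $s = \varphi(u)$ followed by an integration by parts gives $Q(\mu(v)) = -\tfrac12 |v-\bar v|^2 \mu(v) - \int_{-\infty}^{-|v-\bar v|^2/2} \varphi(u)\,du$. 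Integrating in $v$ and applying Fubini to the second term reduces the question to the finite quantity $\int_{-\infty}^0 \varphi(u) \sqrt{-u}\,du$ guaranteed by~\eqref{condphi}. A lower bound on $\int Q(f)$ for $f \in L^1$ follows from the tangent inequality $Q(s) \geq Q(s_0) + Q'(s_0)(s-s_0)$ at any fixed $s_0 > 0$.

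For point \emph{ii)}, convexity of $H_Q$ is immediate from convexity of $Q$ (the correction terms are linear in $f$); pointwise non-negativity yields global non-negativity. For lower semi-continuity, I would extract from a converging sequence an almost-everywhere converging subsequence and apply Fatou's lemma to the nonnegative integrand. The fact that $H_Q(f) = 0$ forces $f = \mu$ almost everywhere follows from the strict monotonicity of $\varphi$ required in Definition~\ref{def:S-stable}, which makes $Q$ strictly convex on the range of $\varphi$. Finally, a uniform bound $Q'' \geq \alpha$ yields the pointwise Taylor lower bound $Q(f) - Q(\mu) - Q'(\mu)(f-\mu) \geq \tfrac{\alpha}{2}(f-\mu)^2$, which integrates to the claimed $L^2$ control.

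For point \emph{iii)}, a direct computation: in the Maxwellian case $\varphi(s) = (2\pi T)^{-1/2} e^{s/T}$ and $\varphi^{-1}(s) = T \ln s + T \ln\sqrt{2\pi T}$. Since adding an affine function to $Q$ does not affect $H_Q$ (the contribution cancels in the Bregman combination), one may take $Q(s) = T s \ln s$. Substituting and using $\int f = \int \mu = 1$ reduces $H_Q(f)$ to $T\int f \ln(f/\mu)\,dxdv = T\,H(f|\mu)$, and the claimed $L^1$ control is then the Csisz\'ar--Kullback--Pinsker inequality.

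Point \emph{iv)} is the most delicate and will be the main obstacle. The strategy I would use is to rewrite $H_n(f)$ via the Legendre transform: using $Q_n'(\mu_n) = -\tfrac12|v-\bar v|^2$ and the identity $Q_n'(\mu_n)\mu_n - Q_n(\mu_n) = Q_n^*(Q_n'(\mu_n))$ with $Q_n^*(u) = \int_{-\infty}^u \varphi_n(\sigma)\,d\sigma$, evaluation of the resulting formula at $f = \mu_n$ (where $H_n$ vanishes) identifies the contribution from the Legendre term and leads to
\[
H_n(f) - \tfrac12 \int f\,(v-\bar v)^2\,dxdv = \int Q_n(f)\,dxdv - \int Q_n(\mu_n)\,dv - \tfrac12 \int (v-\bar v)^2 \mu_n\,dv.
\]
The last term vanishes by assumption~\eqref{cond:conv}, so the convergence reduces to showing $\int Q_n(f)\,dxdv - \int Q_n(\mu_n)\,dv \to 0$ for bounded $f$ satisfying~\eqref{energyplus}. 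The natural tool is a scaling argument: letting $\sigma_n := \bigl(\int \mu_n(v-\bar v)^2\,dv\bigr)^{1/2}$, the concentration assumption forces $\sigma_n \to 0$, and the change of variable $\varphi_n(u) = \sigma_n^{-1}\tilde\varphi(u/\sigma_n^2)$ (that holds asymptotically for self-similar profiles) gives the relation $Q_n(s) = \sigma_n \tilde Q(\sigma_n s)$. Plugging $f$ and $\mu_n$ into this formula shows both integrals are of order $O(\sigma_n^2 |\ln \sigma_n|)$ for bounded $f$ with $\int f |\ln f|$ integrable, hence vanish. The delicate issue is that this clean scaling is only exact in the asymptotically self-similar case; in general one needs a careful splitting of the domain of integration in the concentration region of $\mu_n$ versus its complement, combined with the convexity bound $Q_n(f) \leq Q_n(\mu_n) + Q_n'(\mu_n)(f-\mu_n) + H_n(f)$ and its dual, to establish the limit uniformly over the admissible $Q_n$.
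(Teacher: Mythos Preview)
Your treatment of points \emph{ii)} and \emph{iii)} is essentially the same as the paper's and is fine. There are, however, two genuine gaps.

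\medskip

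\textbf{Point \emph{i)}, lower bound on $\int Q(f)$.} The tangent inequality $Q(s)\ge Q(s_0)+Q'(s_0)(s-s_0)$ does not give what you claim: the domain $\T\times\R$ has infinite measure, so integrating the constant $Q(s_0)-Q'(s_0)s_0$ (which is negative for any $s_0>0$, since $Q(0)=0$ and $Q'<0$ near $0$) yields $-\infty$. This is precisely why the hypothesis~\eqref{energyplus} is needed and why the paper works harder here. The paper first extracts from the integrability condition~\eqref{condphi} the pointwise decay $\varphi(u)\le C(1-u)^{-3/2}$, which inverts to $Q'(z)=\varphi^{-1}(z)\ge -C^{2/3}z^{-2/3}$ and hence $Q(z)\ge -\tfrac13 C^{2/3}z^{1/3}$ on the range of $\varphi$. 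This power bound near $0$ is what makes $\int Q(f)$ bounded below after a H\"older argument against the weight $(1+v^{2+\eta})$. You also do not address the second half of \emph{i)} (extra moment on $\mu$ rather than on $f$), which in the paper's approach comes from the sharper bound $Q(z)\ge -C'z^{(1+\eta)/(3+\eta)}$.

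\medskip

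\textbf{Point \emph{iv)}.} Your identity
\[
H_n(f)-\tfrac12\!\int f(v-\bar v)^2\,dxdv=\int Q_n(f)-\int Q_n(\mu_n)-\tfrac12\!\int(v-\bar v)^2\mu_n
\]
is correct, and reduces the problem to $\int Q_n(f)\to 0$ and $\int Q_n(\mu_n)\to 0$. But your scaling ansatz $\varphi_n(u)=\sigma_n^{-1}\tilde\varphi(u/\sigma_n^2)$ is not available for a general sequence of $S$-stable profiles, and the ``careful splitting'' you allude to is not an argument. The paper's route is both simpler and tied directly to the quantitative estimate you are missing in \emph{i)}: the concentration hypothesis~\eqref{cond:conv} is exactly $\int\sqrt{-u}\,\varphi_n(u)\,du\to 0$, so the constants $C_n$ in the bound $-\tfrac13 C_n^{2/3}z^{1/3}\le Q_n(z)\le 0$ on $[0,a_n]$ tend to $0$, while $a_n=\varphi_n(0)\to+\infty$. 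For bounded $f$ one then has $f\le a_n$ for large $n$, so $\int Q_n(f)\le 0$, and the lower bound from \emph{i)} (with constant $C_n\to 0$) forces $\int Q_n(f)\to 0$. The term $\int Q_n(\mu_n)$ is handled by the explicit formula $\int Q_n(\mu_n)=-3\int\sqrt{-u}\,\varphi_n(u)\,du$ obtained in \emph{i)}. In short, once \emph{i)} is done quantitatively, \emph{iv)} is almost immediate; your difficulties in \emph{iv)} stem from not having proved \emph{i)} properly.
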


\begin{rque}
The last point is interesting because it tells us that the natural extension of 
the Casimir functional in the case where the profile is a Dirac mass, is 
$\frac{1}{2}\int |v - \bar v|^2 f \, dv dx$. This is exactly the quantity that 
is introduced when a modulated energy method is used in the zero temperature 
limit \cite{Br00,Mas,HK}. 
\end{rque}

\begin{proof}[Proof of Proposition \ref{prop:HQ}] \mbox{}

{\sl Proof of the point $i)$. }
We shall first prove that condition~\eqref{condphi} and the monotonicity 
of $\varphi$ imply that $ \varphi(u) |u|^{\frac32} \le C$ for some $C \in 
\R^+$. In fact, since $\varphi$ is increasing, we have the following inequality
$$
  \sum_{n =0}^\infty 2^{\frac{3n}2} \varphi(-2^{n+1})  = \sum_{n =0}^\infty (2^{n+1} - 2^n) 2^{\frac n2} \varphi(-2^{n+1}) \le \int_{-\infty}^0  \sqrt{-u} \varphi(u) \,du < +\infty.
$$
It implies the boundedness of the sequence $2^{\frac{3n}2} \varphi(-2^{n+1})$, 
from which we conclude using monotonicity, that  there exists a constant $C$ 
such that
\begin{equation}
\label{condphi-consequence}
\forall u \in \R^-,\quad  0 < \varphi(u) \le  C (1-  u)^{-\frac32} .
\end{equation}
Denoting $a = \varphi(0)$ and using the requirements on $Q$ (See 
Definition~\ref{def:Casimir}), this implies that 
\begin{equation} \label{ineq:phiQ}
\forall z \in (0,a], \qquad 
 - C^{\frac23} z^{-\frac23} \le \varphi^{-1} (z) 
<0, \quad \text{and} \quad
  - \frac13 C^{\frac23} z^{\frac13} \le  Q(z) \le 0.
\end{equation}

\bigskip
Next, the term $\int Q'(\mu) \mu\,dxdv$ which appears in the definition of 
$\LL_\ep$ may be rewritten using that
\begin{equation} \label{eq:Qmu}
Q' [\mu(v)] = \varphi^{-1} \circ \varphi \Bigl( - \frac{\vert v - \bar v 
\vert^2}2\Bigl) = - \frac{\vert v - \bar v \vert^2}2.
\end{equation}
Thus, it comes 
$$
\int Q'(\mu) \mu \,dx\,dv = - \frac12 \int  \vert v - \bar v \vert^2 \varphi 
\Bigl( - \frac{\vert v - \bar v \vert^2}2\Bigl) \,dv,
$$
which is finite under condition~\eqref{condphi}. Similarly, we  see that the 
term $\int Q'(\mu) f\,dxdv$ is finite when $f \in L^1$ has finite kinetic 
energy.

\bigskip
Next, the term $\int Q(\mu) \,dxdv$ is also finite. To see this, use that $Q'
= \varphi^{-1}$ on the range of $\varphi$, the assumption $Q(0)=0$ to 
write for all $v$
$$
Q(\mu(v))=  H(u) := \int_0^{\varphi(u)} \varphi^{-1}(r) \,dr 
, \quad \text{with} \; u := - \frac{\vert v -\bar v\vert^2}2.
$$ 
Here we can use the following relation, which is clear from inspection of the 
graph of $\varphi$ (or a formal differentiation)
$$
H(u) = \int_0^{\varphi(u)} \varphi^{-1}(r) \,dr  =  u  
\varphi(u)  - \int_{-\infty}^u \varphi(s) \,ds.
$$
It leads to
\begin{align}
\int_{\T \times \R} Q(\mu(v)) \,dxdv & = \int_{-\infty}^0 H(u) 
\frac{du}{\sqrt{-u}} 
\nonumber \\
& =   - \int_{-\infty}^0 \sqrt{- u}  \, \varphi(u) \,du
- \int_{-\infty}^0 \left( \int_{-\infty}^u \varphi(s) \,ds\right) \frac{du}{\sqrt{-u}}
\nonumber \\
& =  - \int_{-\infty}^0 \sqrt{- u} \, \varphi(u) \,du
- \int_{-\infty}^0 \left( \int_s^0 \frac{du}{\sqrt{-u}} \right) \varphi(s) \,ds 
\nonumber \\
& = -3 \int_{-\infty}^0 \sqrt{- u}  \,\varphi(u) \,du,
\label{egal:Qmu}
\end{align}
 which is finite by assumption~\eqref{condphi}.
 
 \bigskip
Finally, using the bound by below~\eqref{ineq:phiQ} for $Q$ on $[0,a]$ and 
the simpler bound $Q(z) \ge - b z  - c$ for $z \ge a$ (such 
nonnegative constants $b$ and $c$ exist since $Q$ is convex), we get
\begin{align}
\int_{\T \times \R} Q(f) \,dx\,dv  & = \int_{\{ f \le a \}} Q(f) \,dx\,dv +
\int_{\{ f \ge a \}} Q(f) \,dx\,dv 
\nonumber \\
 & \ge -C \int f^{\frac13} \,dx\,dv   -  \int_{\{ f \ge a \}} (b f +c) 
\,dx\,dv 
\nonumber \\
& \ge -C \int \bigl[f(1+ v^{2+\eta})\bigr]^{\frac13} \frac{dx\,dv}{(1+v^{2+ 
\eta})^{\frac13}} - b \|f\|_1 -c \int_{\{ f \ge a \}} 
\,dx\,dv 
\nonumber \\
& \ge  -C \biggl[ \int f(1+ v^{2+\eta})\,dx\,dv \biggr]^{\frac13} 
\biggl[ \int (1+ v^{2+\eta})^{-\frac12 }\,dx\,dv \biggr]^{\frac23} - \Bigl( b + 
\frac c a  \Bigr) \| f\|_1 
\nonumber \\
&\ge -  C_\eta  \biggl[ \int f(1+ v^{2+\eta})\,dx\,dv \biggr]^{\frac13} - 
\Bigl( b 
+ \frac c a  \Bigr) \| f\|_1,
\label{bound:Qf}
\end{align}
where we have used on the fourth line the H\"older inequality. 

The case where 
 $\mu$ satisfies \eqref{energyplus-mu} allows to improve the 
bound from below~\eqref{ineq:phiQ} by
$$
\forall z \in (0,a],  \qquad
  - C' z^{\frac{1+\eta}{3+\eta}} \le  Q(z) \le 0.
$$
Having seen this, the same calculations can be done using only that $f$ has finite kinetic energy.

 All in all, we see that all the integrals composing $H_Q$ are well 
defined: the first one belongs to $\R 
\cup \{+\infty\}$ and the three other ones are finite.

 \bigskip
{ \sl Proof of Points $ii)$ and $iii)$.}
The convexity and nonnegativity of $H_Q$ are clear. The convexity of $Q$ implies 
that the first term in the definition~\eqref{HQintro} of $H_Q$ is l.s.c.. Two 
others are constant, and the last one may be rewritten $\int |v - \bar v|^2 
f(x,v) \,dx\,dv$ which is l.s.c on the space of functions $f \in L^1$ with 
finite kinetic energy.
The fact that $H_Q$ vanishes only at $\mu$ is a consequence of the uniform convexity of $Q$ on the support of $\varphi^{-1}$ (that is also the range of $\mu$).

The point $iii)$ is a simple consequence of a short calculation that we skip. The $L^1$ control is the classical Csisz\'ar-Kullback-Pinsker inequality.

\bigskip
{ \sl Proof of Point $iv)$.}
Under the assumption~\eqref{cond:conv}, the short argument at the beginning of 
the proof of Point $i)$ implies that the bounds~\eqref{ineq:phiQ} hold for each 
$n$, with a constant  $C_n$ that goes to zero as $n \rightarrow + 
\infty$. Remark also that $a_n = \varphi_n(0) \rightarrow + \infty$.

If $\| f\|_\infty < + \infty$, we will have for $n$ large enough $ f(x,v) \le  
a_n$ 
for all $(x,v)$, and this implies that $\int Q_n(f) \le 0$. It is also not 
difficult to see that the bound by below obtained in~\eqref{bound:Qf} goes to 
zero has $n$ goes to infinity.
Thus 
$$
\lim_{n \rightarrow + \infty} \int_{\T \times \R} Q_n(f(x,v)) \,dxdv  = 0, 
\qquad 
\lim_{n \rightarrow + \infty} \int_{\T \times \R} Q_n(\mu_n(x,v)) \,dxdv  = 0,
$$
thanks to~\eqref{egal:Qmu} and the assumption~\eqref{cond:conv}. The term $\int Q_n'(\mu_n)$  vanishes also in the limit because it is exactly the term that appears 
in~\eqref{cond:conv}. The last remaining term is constant and is equal to $\frac{1}{2} \int |v - \bar v|^2 f \, dv dx$, and this concludes the proof.
\end{proof}


\subsection{The well prepared case.}
In this paragraph we prove Theorem \ref{thm}. Recall that 
\begin{equation}
\label{def-L2}
\mathcal{L}_\epsilon(t)= \frac {\ep^2} 2  \int \vert \partial_x V_\epsilon(t) \vert^2 dx + \int \left[Q(f_\epsilon(t)) -  Q(\mu) - Q'(\mu)(f_\epsilon(t)-\mu) \right] dvdx.
\end{equation}

Since our solutions $f_\ep$ are strong ones,
the term $\int Q(f_\ep(t))\,dxdv$ is exactly independent of the time:
$$
\int Q(f_\ep(t))\,dxdv = \int Q(f_{\ep,0}) \,dxdv. 
$$
We introduce the current $j_\ep$, defined as follows:
$$
j_\ep(t,x) := \int f_\ep(t,x,v) v \,dv.
$$
Since the two other constants term  $\int Q(\mu) \,dxdv$ and $\int Q'(\mu) \mu \,dxdv$ are finite, it remains to 
understand how behave
\begin{align*}
\frac \ep 2  \int \vert \partial_x V_\epsilon(t) \vert^2 dx - \int Q'(\mu)f_\epsilon(t) dvdx 
&=
\frac \ep 2  \int \vert \partial_x V_\epsilon(t) \vert^2 dx + \frac12 \int \vert v - \bar v \vert^2 f_\epsilon(t) 
dvdx.  \\
& = 
\EE_\ep[f_\ep] - \bar v  \int  j_{\ep} \,dx + \frac{\vert \bar v \vert^2}2 \int 
\rho_\ep  \,dx,
\end{align*}
where we have used \eqref{eq:Qmu}.
 Since the total mass $\int \rho_\ep \, dx$, and 
the total momentum $\int j_\ep \, dx$, and the total energy $\EE_\ep[f_\ep]$ are preserved by strong solutions of the Vlasov-Poisson 
equation~\eqref{quasi}, we  finally conclude that $\LL_\ep$ is constant.

\subsection{Plasma oscillations.}
\label{sec:heuristics}

The above analysis is only useful in the well-prepared case, that
is when the potential energy vanishes in the limit: $ \| \partial_x V_{0,\ep}
\|_2 = o\bigl(\ep^{-1}\bigr)$. In general, as already evoked in the
introduction, there are time oscillations of the electric field,
called plasma oscillations, that we have to take into account.
In this paragraph, we give a description of these, with the aim to explain the form of the filtered functionals of Theorem \ref{thm-IP}.

For any $\ep>0$, we consider $f_\ep$ a solution to \eqref{quasi}. 
Then the density $\rho_\ep$ and the current $j_\ep$ satisfy the  system 
of equations
\begin{equation}  \label{FirstMoments}
\begin{cases}
\ds \partial_t \rho_\ep + \partial_x j_\ep = 0, \\
\ds \partial_t j_\ep + \partial_x \Bigl( \int f_\ep v^2 \,dv \Bigr) + \partial_x 
V_\ep \rho_\ep = 0.
\end{cases} \end{equation}
Fast oscillations are hidden in that system. Since we work in dimension one, we can always 
write
\begin{equation} \label{divgrad}
j_\ep(t,x) = \bar j_\ep (t)  + \partial_x J_\ep(t,x),
\end{equation}
which is the analogue of the decomposition of a vector-field in potential part 
and divergence free part in higher dimension.  The so-called ``potential of the 
current'' $J_\ep$ is defined up to a constant, 
that we may choose later.
Using the Poisson equation of~\eqref{quasi} in the two lines 
of~\eqref{FirstMoments}, we get
$$ 
\begin{cases}
\ds - \ep^2  \partial_{txx} V_\ep + \partial_{xx} J_\ep = 0, \\
\ds \partial_t \bar j_\ep + \partial_{tx} J_\ep + \partial_x \Bigl( \int f_\ep 
v^2 \,dv \Bigr) + \partial_x V_\ep   - \frac{\ep^2} 2 \partial_x \vert 
\partial_x V_\ep \vert^2 = 0.
\end{cases}
$$
In the second line, the gradient part in $x$ and the constant part can be solved separately thanks to the 
periodicity, and we first get that $\bar j_\ep(t) = \bar j_{0,\ep}$ for all 
times $t$.  Then the 
equation on $\partial_x J_\ep$ and $\partial_x V_\ep$ may be rewritten
\begin{equation} \label{Oscill}
\begin{cases}
\ds   \partial_t  ( \ep \,\partial_ x V_\ep)  &\ds =    \frac{\partial_x J_\ep} 
\ep,  \\
\ds  \partial_t ( \partial_x J_\ep) & \ds =  -  \frac{\ep\, \partial_x  V_\ep} 
\ep   + \partial_x  \Bigl( \frac12  \vert \ep \, \partial_x   V_\ep \vert^2 -   
\int f_\ep v^2 \,dv  \Bigr),
\end{cases}
\end{equation}
which may also rewritten directly on $V_\ep$ and $J_\ep$, which are defined only 
up to some constant. Thanks to that, we can define  a complex quantity
$$
\OO_\ep(t,x) := J_\ep(t,x) + i \, \ep \, V_\ep(t,x),
$$
which satisfies
$$
\partial_t \OO_\ep = \frac i  \ep \OO_\ep +  \vert \partial_x  (\ima \OO_\ep) 
\vert^2 -   \int f_\ep v^2 \,dv. 
$$
Using Duhamel formula, we obtain
\begin{equation} \label{duhamel}
e^{- i \frac t \ep} \OO_\ep(t) =  \OO_{0,\ep} + \int_0^t  e^{ - i \frac s  \ep}
\left(  \vert \partial_x  (\ima \OO_\ep) \vert^2 -   \int f_\ep v^2 \,dv 
\right)\,ds.
\end{equation}
This means that there are large oscillations of period $\ep$ in $\partial_x V_\ep$ and 
$\partial_x J_\ep$, respectively of amplitude or order $\ep^{- 1}$ and $1$. For 
this reason, even if $f_{0,\ep}$ converges to a stable equilibrium $\mu(v)$, we can not expect  $f_\ep$ to converge to $\mu$, without filtrating these 
oscillations. What we should expect is something like $f_\ep(t,x,v- \partial_x 
J_\ep) \approx f(v)$. In particular, $\int f_\ep v \,dv$  and $\int f_\ep(v) v^2\,dv$ should not converge to $\int f v
\,dv$ and $\int f(v) v^2\,dv$, 
but we expect something like
\begin{align*}
\int f_\ep(t,x,v) \,dv  & = \rho_\ep \approx \int \mu(v) \,dv  =1,\\
\int f_\ep(t,x,v) v \,dv  & = j_\ep(t,x) \approx \bar v +  \partial_x J_\ep(t,x),
\\
\int f_\ep(t,x, v) v^2\,dv & \approx  \int \mu(v) \vert v + \partial_x J_\ep(t,x) 
\vert^2 \,dv 
= 2T + \vert  \bar v + \partial_x J_\ep(t,x)  \vert^2,
\end{align*}
with the notation $\bar v := \int \mu(v)v\,dv$ and  $T :=  \frac12\int \mu(v) |v - 
\bar v |^2 \,dv$.  Using this in equation~\eqref{duhamel}, we get, if we forget 
the constants which are not important at the level of potentials, 
\begin{equation} \label{duhamel2}
e^{- i \frac t  \ep} \OO_\ep(t) =  \OO_{0,\ep} + \int_0^t  e^{ - i \frac s \ep}
\left(  \vert \partial_x  (\ima \OO_\ep) \vert^2 -  \vert \partial_x  (\re 
\OO_\ep) + \bar v \vert^2  \right)\,ds.
\end{equation}
The term $\OO_\ep$ displays fast oscillations in time, but we can try to rewrite 
everything in terms of
$$
\UU_\ep (t,x) :=  e^{- i \frac t \ep} \OO_\ep(t,x),
$$
which has a bounded derivative in time, because both the kinetic and potential 
energy are bounded.
We need to write $\vert \ima \partial_x \OO_\ep \vert^2$ and $\vert \re 
\partial_x \OO_\ep\vert^2$ in terms of $\partial_x \UU_\ep$. It comes
\begin{align*}
\vert \ima \partial_x \OO_\ep \vert^2 &=   \frac14 \vert  \partial_x \OO_\ep - 
\overline{\partial_x \OO_\ep}  \vert^2
 =  \frac14 \left\vert  \partial_x [ e^{- i \frac t \ep}\OO_\ep]  - e^{- 2 i 
\frac t \ep} \overline{\partial_x [e^{- i \frac t \ep} \OO_\ep]}  \right\vert^2 
\\
 &= \frac14 \left\vert  \partial_x  \UU_\ep  - e^{- 2 i \frac t \ep} 
\overline{\partial_x \UU_\ep}  \right\vert^2, \\ 
e^{- i \frac s  \ep}  \vert \ima \partial_x \OO_\ep(s) \vert^2 
& =
\frac{e^{- i \frac s  \ep}}2 \vert  \partial_x  \UU_\ep|^2 -
\frac1 4 \partial_x  \UU_\ep \overline{\partial_x  \UU_\ep} \left(e^{ i \frac s  
 \ep} + 
e^{- 3i \frac s  \ep} \right) .
\end{align*}
So since $\UU_\ep$ contains no fast oscillations in time, $e^{- i \frac s  \ep}  
\vert \ima \partial_x \OO_\ep(s) \vert^2$ is a sum of terms with fast 
oscillations in time. So we can expect its contribution in~\eqref{duhamel2} to 
be small  and therefore we neglect it. The same holds for $e^{- i \frac s  \ep}  \vert \re 
\partial_x \OO_\ep(s) \vert^2$. The only non constant term that will contribute 
is in fact the one coming from
$e^{- i \frac s  \ep}  \bar v  \re \partial_x \OO_\ep(s) $, since
\begin{align*}
 \re \partial_x \OO_\ep(s)   &=  \frac12 [ \partial_x \OO_\ep(s)  + 
\overline{\partial_x \OO_\ep(s)}]
 = \frac12 \left[  e^{i \frac s  \ep}\partial_x \UU_\ep(s)  + e^{- i \frac s  
\ep} \overline{\partial_x \UU_\ep(s)} \right], \\
2 \bar v e^{- i \frac s  \ep}  \re \partial_x \OO_\ep(s)  &= 
\bar v   \, \partial_x \UU_\ep(s)  +  \bar v e^{- 2 i \frac s  \ep} 
\overline{\partial_x \UU_\ep(s)}.\end{align*}
Only the first term in the r.h.s. of the last equation will have a significant 
contribution, and if all the approximations are justified, we will end up with a function
$\UU_\ep$ satisfying approximately the equation
$$
\UU_\ep(t) = \UU_{0,\ep} - \bar v  \int_0^t \partial_x \UU_\ep(s) \,ds, 
$$
or equivalently the linear transport equation
$$
\partial_t \UU_\ep + \bar v \, \partial_x \UU_\ep = 0,
$$
with the initial condition $\UU_{0,\ep} = J_{0,\ep} + i  \, \ep V_{0,\ep}$. 

If $f_{0,\ep}$ converges towards $\mu(v)$, then the ``potential'' part 
$J_{0,\ep}$ of the current converges towards $0$. If moreover the 
potential $\ep \, \pa_x V_{0,\ep}$ has a limit denoted 
$\pa_x V_0$ as $\ep$ goes to zero, then it is natural to define $\UU$ as the solution of the 
simple linear transport equation
\begin{equation} \label{oscilimit}
\begin{cases}
\partial_t \UU + \bar v \, \partial_x \UU = 0, \\
\UU(0)  =  i V_0
\end{cases}.
\end{equation}
The solution of~\eqref{oscilimit} is simply given by $\UU(t,x) := i \,V_0(x-\bar
v t)$.
Therefore, we can use the approximation 
\begin{equation} \label{filtre}
\OO_\ep(t) \approx  i\, e^{ i \frac t  \ep} V_0(x-\bar vt)
= V_0(x-\bar vt) \left[ - \sin \frac t  \ep  + i \, \cos \frac t 
\ep \right],
\end{equation}
in order to filtrate the plasma oscillations for small $\ep$, hence the 
expressions of the functionals in Theorem \ref{thm-IP}.

\subsection{The general case.}
\label{sec:IP}
We now prove Theorem \ref{thm-IP}. By Galilean invariance of the Vlasov equation \eqref{quasi}, we can restrict ourselves to the case $\bar v= 0$. 
Indeed, if $\bar  v \neq 0$, we can rewrite the problem in the variable $(x',v')$ defined by
$$
v = \bar v  + v', \qquad  x = x' + \bar v t.
$$
This will simplify the calculations.

Next with the help of some simple changes of variables, remark that we may 
rewrite
\begin{multline} \label{newcas}
H_Q\left[ f_\ep\left(t,x,v - \partial_x V_0 \sin \frac t \ep \right) \right] = 
\int Q(f_\ep) \,dx\,dv +  \\
\frac12 \int \Bigl\vert v  +  \partial_x V_0\sin \frac t \ep \Bigr\vert^2 
 f_\epsilon(t,x,v)    \,dx\,dv
- \int Q( \mu)\,dv - \int \frac{v^2}2 \mu(v) \,dv 
\end{multline}
Since, we are dealing with strong solution, the first term in the r.h.s. 
of~\eqref{newcas} is constant. In view of Proposition~\ref{prop:HQ}, the 
last two are also finite constants.

We may also develop the kinetic energy term, which yields
\begin{multline} \label{LO:Ekin}
\frac12 
\int \Bigl\vert v  +  \partial_x V_0\sin \frac t \ep \Bigr\vert^2 
 f_\epsilon(t,x,v)    \,dxdv  
 = \: \frac12 \int v^2 f_\ep(t) \,dx\,dv \\
+ \frac12 \sin^2 \frac t \ep \int \vert\partial_x V_0 \vert^2 \rho_\ep(t,x)\,dx  
+ \sin \frac t\ep \int \partial_x V_0  \, j_\ep(t,x) \,dx.
\end{multline}

Finally, the term with the electric field in $\LL_\ep^O$ leads to 
\begin{multline}
 \frac 1 2 \int \Bigl|  \ep \partial_x V_\epsilon - \partial_x V_0\cos \frac 
t\ep \Bigr|^2 dx 
 = \frac12 \int \vert \ep \partial_x V_\ep \vert^2 \,dx \\
+ \frac12 \cos^2 \frac t\ep \int \vert \partial_x V_0(x) \vert^2 \,dx
- \cos \frac t\ep \int \partial_x V_0 \,[\ep  \partial_x V_\ep]
 \,dx. \label{LO:Epot}
\end{multline}

Summing up the first terms in the r.h.s in~\eqref{LO:Ekin} and~\eqref{LO:Epot}, 
we
get the total energy $\EE_\ep[f_\ep(t)]$, which is preserved by the dynamics. Moreover, up to a constant, we can replace the
$\cos^2$ appearing in~\eqref{LO:Epot} by a $-\sin^2$. 
Finally, summing up~\eqref{LO:Ekin} and~\eqref{LO:Epot}, we get that
$$
\LL_\ep^O(t) = \KK_\ep^O(t) + \text{a constant term}
$$
where
\begin{equation} \label{defKep}
\KK_\ep^O(t) := \frac12  \sin^2 \frac t\ep \int \vert
\partial_x V_0 \vert^2 ( \rho_\ep -1)\,dx +  \int  
\partial_x V_0  \Bigl[   j_\ep \sin \frac t\ep -   \ep  \partial_x
V_\ep \cos \frac t\ep \Bigr] \,dx. 
\end{equation}
Using the Poisson equation in~\eqref{quasi}, the decomposition of 
$j_\ep$ in its constant and potential part $j_\ep = \bar j_\ep + \partial_x
J_\ep$, and introducing 
\begin{equation}
\label{K}
\partial_x R_\ep :=  \partial_x J_\ep  \sin \frac t \ep  - 
 \ep  \partial_x V_\ep \cos \frac t \ep,
 \end{equation}
we can express $\KK_\ep^O$ in terms of $\partial_x R_\ep$ only:
\begin{equation} \label{Kep2}
\KK_\ep^O(t) = - \frac{1} 2  \sin^2 \frac t \ep  \int \vert
\partial_x V_0 \vert^2  \, \ep^2  \partial_{xx} V_\ep \,dx +  \int  
\partial_x V_0   \, \partial_x R_\ep \,dx . 
\end{equation}
After an integration by parts, we can  rewrite it as
\begin{align*} 
\KK_\ep^O(t) \: &=  \:    \ep\,  \sin^2 \frac t \ep   \int 
\partial_{xx} V_0  \partial_x V_0   
  \,  \ep  \partial_x V_\ep \,dx   +  \int  
\partial_x V_0  \,   \partial_x R_\ep  \,dx  \\
&=:  \hspace{50pt} \II_\ep^{O,1}(t) \hspace{59pt}  +  \hspace{28pt} 
\II_\ep^{O,2}(t).
\end{align*}
 It can be shown the first term $\II_\ep^{O,1}$ is of order 
$\ep$, but we shall not use this fact, because we need this term in 
order to compensate for some bad terms coming form the time derivative of 
$\II_\ep^{O,2}(t)$. 
The interest of $\partial_x R_\ep$ is that thanks to~\eqref{Oscill}, we have 
\begin{equation}
\label{derivK}
\partial_t [\partial_x R_\ep ] = \partial_x \Bigl( \frac12 \vert
  \ep  \partial_x V_\ep \vert^2 - \int f_\ep v^2 \,dv \Bigr) \sin \frac
t \ep ,
\end{equation}
so that the possible oscillations with amplitude $\ep^{-1}$ in $\partial_t
\KK_\ep^O$ will vanish. Using once again \eqref{Oscill}, it comes
\begin{align}
 \frac d{dt} \II_\ep^{O,1}(t) & =  
2 \sin \frac t \ep   \cos \frac t \ep   \int \pa_{xx} V_0 \pa_x V_0 \, \eps 
\pa_x V_\eps \, dx  
+  \sin^2 \frac t \ep    \int \pa_{xx} V_0 \pa_x V_0    \pa_x J_\eps \, dx 
 , \label{deriv:I1}\\
 \frac d{dt} \II_\ep^{O,2}(t)
&= \sin \frac t \ep  \int  \partial_x V_0    \partial_x \Bigl( \frac12 \vert
  \ep  \partial_x V_\ep \vert^2 - \int f_\ep v^2 \,dv \Bigr)  \, dx \nonumber\\
&=  - \frac{1}{2}  \sin \frac{t} \ep  \int \pa_{xx} V_0 \,|\ep \pa_x V_\ep|^2  
\, dx
+ \sin \frac{t} \ep  \int f_\ep v^2 \pa_{xx} V_0  \, dv dx . \label{deriv:I2}
\end{align}
We shall now write the two decompositions
\begin{align}
- \frac{1}{2} \sin \frac{t} \ep  \int \pa_{xx} V_0  |\ep \pa_x V_\ep|^2  \, dx
& = -\frac{1}{2} \sin \frac{t} \ep  \int \pa_{xx} V_0 \left|\ep \pa_x V_\ep- 
\pa_x V_0 \cos \frac{t} \ep \right|^2 \, dx \nonumber\\
& \hspace{-70pt} - \sin \frac{t} \ep  \cos \frac{t} \ep \int \pa_{xx} V_0 \pa_x 
V_0 \, \ep \pa_x V_\ep    \, dx 
+\frac{1}{2} \cos^2 \frac{t} \ep  \sin \frac{t} \ep \int \pa_{xx} V_0  |\pa_x 
V_0|^2   \, dx, \label{decomp1} \\
\sin \frac{t} \ep  \int f_\ep v^2 \pa_{xx} V_0  \, dv dx 
& = \sin \frac{t} \ep  \int f_\ep \left|v  + \pa_x V_0 \sin \frac{t} \ep  
\right|^2 \pa_{xx} V_0 \, dv dx  \nonumber \\
& \hspace{-70pt} - \sin^3 \frac{t} \ep \int  \rho_\ep |\pa_x V_0|^2  \pa_{xx} 
V_0   \,  dx - 2 \sin^2 \frac{t} \ep  \int \pa_{xx} V_0 \pa_x V_0  \, j_\eps 
\,  dx. \label{decomp2}
\end{align}
Note that in the first decomposition, the last term of the r.h.s. is actually 
equal to $0$ by integration by parts. 

The interest of these two decompositions is that it introduces terms which are
very similar to those of $\LL^0_\ep(t)$ : up to the multiplicative factor $\pa_{xx} V_0$, 
the first term in the r.h.s. of~\eqref{decomp1} is the relative potential 
energy term 
of $\LL^0_\ep(t)$, and the first term in the r.h.s. 
of~\eqref{decomp2} appears in the relative entropy
part of $\LL^0_\ep(t)$ (See~\eqref{newcas}). The idea is then to let 
appear the missing terms.
To this end, remark that since $\mu$ does not depend on $x$, by integration 
by parts, we have
\begin{equation*}
\int Q(\mu) \,  \pa_{xx} V_0  \, dv dx = 0, \quad
\int Q'(\mu) \mu \,  \pa_{xx} V_0 \, dv dx = 0.
\end{equation*}
Thus, using~\eqref{deriv:I1}--\eqref{decomp2}, we finally 
end up with
\begin{multline}
\label{calcul-LL}
  \frac d{dt} \LL_\ep^{O}(t) =
  -  \sin\frac{t}{\eps} \int \Bigg\{\int2\big[Q\left( \tilde f_\ep \right)- 
Q(\mu)
- Q'(\mu)(\tilde f_\ep -\mu)\big] \, dv  + \frac{1}{2}  \Bigl[  \ep 
\partial_x V_\epsilon -
\partial_x V_0
\cos \frac t \ep \Bigr]^2 \Bigg\} \\
\times \pa_{xx} V_0 \, dx +  \sin \frac t \ep \, B_\eps^1(t) + \sin \frac t \ep
B_\eps^2(t) + r_\eps(t),
 \end{multline} 
where $\tilde f_\ep(t,x,v) := f_\ep\left(t,x,v - \partial_x V_0 \sin \frac t \ep 
\right)$ and with:
\begin{align*}
 B_\eps^1(t)\: &:=  \:    \int \pa_{xx} V_0 \, \pa_x V_0   \Bigl( \cos \frac t 
\ep 
\, \ep \pa_x  V_\eps  - \sin \frac t \ep \, \pa_x J_\eps \Bigr) \, dx  
 = \int  \pa_{xx} V_0 \, \pa_x V_0  \,  \pa_x R_\eps \, dx, \\
B_\eps^2(t)\: &:= \: 2  \int Q(f_\ep) \,  \pa_{xx} V_0   \, dv dx,
\\
r_\ep(t)\: &:= \: - \sin^3 \frac{t} \ep \int  \rho_\ep |\pa_x V_0|^2  \pa_{xx}
V_0   \,  dx. 
\end{align*}
In what follows, we will show that $r_\ep$ is of order $\ep$, and that 
$B^1_\ep$ and $B^2_\ep$ are bounded with bounded time derivatives, so that the 
contributions of $\sin \frac t \ep \, B_\eps^1$ and  $\sin \frac t \ep  
B_\eps^2$ will be of order $\ep$ after integration in time.

Integrating~\eqref{calcul-LL} in time and using the convexity of $Q$, we obtain 
the bound
\begin{equation}
\label{eq:gronwall}
\LL_\ep^{O}(t) \leq  \LL_\ep^{O}(0) 
+ 2  \, \| \pa_{xx} V_0 \|_{L^\infty} \int_0^t \LL_\ep^{O}(s) _,ds  
+ \int_0^t \Bigl( \sin \frac s \ep \, B_\eps^1(s) 
+ \sin \frac s \ep \, B_\eps^2(s)+ r_\eps(s)  \Bigr)\,ds.
\end{equation}

\bigskip
\noindent $\bullet$ \emph{Treatment of the time integral with $B_\eps^i$ for
$i=1,2$.}
Since $B^1_\ep$ and $B^2_\ep$ are not small, but have bounded derivatives, their
respective contribution in~\eqref{eq:gronwall} may be controlled  with the help of
an integration by parts. Precisely, we have
\begin{align*}
\int_0^t  \sin \frac s \ep \, 
B_\eps^i(s) \,ds 
& =
\int_0^t \left( \ep  \cos \frac s \ep  \right)' B_\eps^i(s)
\,ds  \\
& = -  \ep \int_0^t 
\cos \frac s \ep   {B_\eps^i}'(s)  \,ds +  \left[ 
\cos \frac s \ep    B_\eps^i(s) \right]_0^t,
\end{align*}
and this leads to
\begin{equation}
 \left| \int_0^t  \sin \frac s
\ep \, B_\eps^i(s) \,ds  \right|  \le
\ep \,  \| {B^i_\ep}'\|_\infty \, t  + 2 \ep \,  \| B_\eps^i \|_\infty
\label{contrib:Ri}
\end{equation}
So, it only remains to get uniform (in time) bounds on $B^i_\eps$ and its
derivative for $i=1,2$.

\bigskip
\noindent $\bullet$ \emph{Uniform bounds on $B_\eps^1$ and ${B_\ep^1}'$.} 

According to the definition of $B^1_\eps$ and~\eqref{K},
we have the following bound
\begin{align*}
 | B_\eps^1(t) | &  \le \| \pa_{xx} V_0 \|_\infty \, 
\| \pa_x V_0 \|_\infty \, \| \pa_x R_\ep \|_{L^1} \\
& \le \| \pa_{xx} V_0 \|_\infty^2 
\bigl( \| \ep \pa_x V_\ep \|_{L^1} + \| \pa_x J_\ep \|_{L^1} \bigr) \\
& \le  \| \pa_{xx} V_0 \|_\infty^2 
\bigl( \| \ep \pa_x V_\ep \|_{L^2} + 2 \, \| j_\ep \|_{L^1} \bigr) \\
& \le  \| \pa_{xx} V_0 \|_\infty^2 
\bigl( 1+ \| \ep \pa_x V_\ep \|_{L^2}^2 + \| f_\ep \|_{L^1} + \| f_\ep
v^2 \|_{L^1} \bigr) \\
& \le 2 \, \| \pa_{xx} V_0 \|_\infty^2 
\bigl( 1 + \EE_{\ep,0} \bigr),
\end{align*}
where we used the notation $\EE_{\ep,0}:= \EE_\ep[f_{0,\ep}]$, the fact that $\partial_x J_\ep = j_\ep - \bar j_\ep$ 
($\bar j_\ep$ being the average of $j_\ep$), and a simple interpolation.
Remember also that $\| f_\ep \|_{L^1}=1$ by assumption on the initial datum.

Now, according to~\eqref{derivK}, we have
\begin{align*}
 | \partial_t B_\eps^1(t) |  &:=
\Bigl| \int  \pa_{xx} V_0 \, \pa_x 
V_0  \, \pa_t \pa_x R_\ep \,dx \Bigr| \\
 & =  \Bigl| \sin \frac
t \ep  \int  \pa_x\left(\pa_{xx} V_0 \pa_x V_0\right)  \Bigl( \frac12
\vert  \ep  \partial_x V_\ep \vert^2 - \int f_\ep v^2 \,dv \Bigr)   \, dx 
\Bigr|  \\
& \le 4 \, \| \partial_{xxx} V_0 \|_{L^\infty} \, \|
\partial_{xx} V_0 \|_{L^\infty} \, \EE_{\ep,0}.
\end{align*}
Plugging it into~\eqref{contrib:Ri}, we get the bound
\begin{equation} \label{contrib:R1}
 \left| \int_0^t \sin \frac s
\ep \, R_\eps^1(s) \,ds  \right| 
 \le 
4 \ep \,  \| \pa_{xx} V_0 \|_\infty \bigl( 1 + \EE_{\ep,0} \bigr)  \Bigl( 
\| \partial_{xxx} V_0
\|_{L^\infty} \, t  +  \| \pa_{xx} V_0 \|_\infty   \Bigr).
\end{equation}

\bigskip

\noindent $\bullet$ \emph{Uniform bounds on $B_\eps^2$ and ${B_\ep^2}'$.} 
Since our solutions are strong, we simply bound using the
notations introduced in~\eqref{eq-crucial}
\begin{align*}
|B_\ep^2(t)| & \le  2 \, \| \pa_{xx} V_0 \|_\infty \int |Q(f_\ep)(t)| \,dx\,dv
\\
& = 2  \, \| \pa_{xx} V_0 \|_\infty \int |Q(f_{\ep,0})(t)| \,dx\,dv \le
2  \, \| \pa_{xx} V_0 \|_\infty \QQ_{\ep,0}.
\end{align*}

To bound the time derivative of $B^2_\ep$, we rely on the fact that $Q(f_\ep)$
is also a strong solution of the Vlasov equation~\eqref{quasi}: 
$$
\pa_t Q(f_\ep) + v \, \pa_x \bigl[Q(f_\ep)\bigr] - \pa_x V_\ep \pa_v
\bigl[Q(f_\ep)\bigr] =0.
$$
We thus get
\begin{align*}
\int \pa_t \bigl[Q(f_\ep)\bigr] \,  \pa_{xx} V_0 \, dv \,dx  
&=  \int \pa_{xxx} V_0 \, v \, Q(f_\ep) \, dv dx   -  \int
  Q(f_\ep)  \, \pa_v \bigl[ \pa_x V_\ep\,   \pa_{xx} V_0  \bigr] \, dv \,dx\\
&=  \int \pa_{xxx} V_0 \, v \, \sqrt{f_\ep}\, \frac{Q(f_\ep)} {\sqrt{f_\ep}} \,
dv dx.
\end{align*}
Note that all the underlying calculations are well justified since
$Q(f_\ep)$ is assumed to be in $L^1_{x,v}$ and $\pa_{xx} V_0$ and $\pa V_\ep$
 are bounded (non uniformly in $\ep$). Then
\begin{align*}
\bigl| \pa_t B^2_\ep \bigr|& = 
\left|\int \pa_t Q(f_\ep) \,  \pa_{xx} V_0  \, dv dx  \right| \\
& \leq \frac 1 2 \| \pa_{xxx} V_0 \|_\infty \left(\int  v^2 {f_\ep} \, dv dx +
\int \frac{Q^2(f_\ep)}{f_\ep} \, dv dx\right) \\
& \le  \| \pa_{xxx} V_0 \|_\infty \Bigl( \EE_{\ep,0} + \frac12 
\QQ_{\ep,0} \Bigr).
\end{align*}
Plugging all into~\eqref{contrib:Ri}, we get the bound
\begin{equation} \label{contrib:R2}
 \left| \int_0^t  \sin \frac s \ep \, R_\eps^1(s) \,ds  \right| 
 \le 
\ep \Bigl[ \| \pa_{xxx} V_0 \|_\infty 
\bigl( \EE_{\ep,0} + \QQ_{\ep,0} \bigr) \, t 
+ 4  \| \pa_{xx} V_0 \|_\infty \QQ_{\ep,0} \Bigr].
\end{equation}

\bigskip
\noindent $\bullet$\emph{Treatment of $r_\eps$.} The last term of the
remainder is the easiest to analyze. We use the Poisson equation in
\eqref{quasi} to write
\begin{align*}
r_\ep(t) &= \: - \sin^3 \frac{t}{\ep} \int |\pa_x V_0|^2  \pa_{xx} V_0   \,  dx
+ \sin^3 \frac{t}{\ep} \int \eps^2 \pa_{xx}^2 V_\ep |\pa_x V_0|^2  \pa_{xx} V_0
 \,  dx \\
&=  \:- \ep \sin^3 \frac{t}{\ep} \int \eps \pa_{x} V_\ep \,   \pa_x
\bigl(|\pa_x V_0|^2  \pa_{xx} V_0 \bigr)   \,  dx
\end{align*}
By Cauchy-Schwarz inequality and  by conservation of the energy, we deduce that
$$
|r_\ep(t)| \leq  3 \, \ep  \,
\| \pa_{xx} V_0 \|_\infty^2 \,
\| \pa_{xxx} V_0 \|_\infty
\bigl( 1 + \EE_{\ep,0} \bigr),
$$
and a time integration leads to
\begin{equation} \label{contrib:R3}
\left| \int_0^t   r_\eps(s) \,ds  \right| 
 \le 
3 \, \ep  \,
\| \pa_{xx} V_0 \|_\infty^2 \,
\| \pa_{xxx} V_0 \|_\infty 
\bigl( 1 + \EE_{\ep,0} \bigr) \, t
\end{equation}

\bigskip
\noindent $\bullet$ \emph{Conclusion.}
For simplicity, we will denote $\kappa:= 2\,\| \pa_{xx} V_0 \|_{L^\infty}$.
Using now~\eqref{eq:gronwall} and gathering the
contributions~\eqref{contrib:R1}--\eqref{contrib:R3} together, we have proved
that
\begin{equation}
\LL_\ep^{O}(t) \leq \LL_\ep^{O}(0) 
+ \kappa \int_0^t \LL_\ep^{O}(s) _,ds  
+ \ep  (a+ bt),
\end{equation}
 
where
$$
 a   := C \,  \kappa \bigl[ \kappa  (1 + \EE_{\ep,0}) + \bar{\mathcal
Q}\bigr]
\quad \text{and} \quad 
 b   :=  C \, \| \pa_{xxx} V_0 \|_\infty \bigl[ (1+\kappa^2) (1 +
\EE_{\ep,0}) + \QQ_{\ep,0}
\bigr],
$$
for some numerical constants $C$. An application of Gronwall lemma leads to the
inequality
$$
\LL_\ep^{O}(t) \leq e^{\kappa t} \Bigl[ 
\LL_\ep^{O}(0) + \ep \,  a + \ep \,  \frac b \kappa
\Bigr],
$$
and this concludes the proof of Theorem~\ref{thm-IP}, with precisely
\begin{equation} \label{def:K}
K : = C \bigl( 1+  \| \pa_{xx} V_0 \|_\infty^2 \bigr) 
\Bigl( 1 + \frac{\| \pa_{xxx} V_0 \|_\infty}{\| \pa_{xx} V_0 \|_\infty} \Bigr).
\end{equation}

\bigskip
\noindent $\bullet$ \emph{The interpolation argument for
Corollary~\ref{rem:lowreg}.}
If $V_0$ has no bounded third derivative, we introduce some smoothing of $V_0$
$$
V_{0,\eta} = V_0 \ast k_\eta, \qquad
k_\eta := \frac1 \eta k \Bigl( \frac \cdot \eta \Bigr),
$$
where $k$ is some nonnegative smooth function with compact support and $\int k
=1$. The mollified potential $V_{0,\eta}$ satisfies the following properties:
\begin{equation*}
\| \pa_{xx} V_{0,\eta} \|_\infty \le  \| \pa_{xx} V_0 \|_\infty, 
\quad
\| \pa_{xxx} V_{0,\eta} \|_\infty \le \frac C \eta \, \| \pa_{xx} V_0 \|_\infty,
\quad 
\bigl\| \pa_x (V_{0,\eta}  - V_0) \bigr\|_2 \le C \,\eta \, \| \pa_{xx} V_0
\|_\infty
\end{equation*}
for some constants $C$ depending only on $k$.
It implies that the constant $K_\eta$ given by~\eqref{def:K}
applied to $V_{0,\eta}$ may be written $K_\eta = K' \bigl( 1 + \frac1 \eta
\bigr)$, for some constant $K'$ depending only on $\| \pa_{xx} V_0 \|_\infty$.
Therefore if we apply the Theorem~\ref{thm-IP} for the potential $V_{0,\eta}$,
we get the bound
$$
\LL_{\ep,\eta}^{O}(t) \leq e^{2 \| \pa_{xx} V_0 
\|_{L^\infty} t}  \Bigl[ \LL_{\ep,\eta}^{O}(0)+  K'  \ep \Bigl( 1 +
\frac 1 \eta \Bigr) \bigl( 1 +
\EE_{\ep,0} + \QQ_{\ep,0} \bigr) \Bigr].
$$
But we also have
\begin{align*}
\LL_{\ep,\eta}^{O}(t)  & = H_Q(\tilde f_\ep) + 
\frac12  \ \Bigl\|   \Bigl(\ep \, \pa_x V_\ep - \pa_x V_0 \cos
\frac t \ep \Bigr) + \cos \frac t \ep \Bigl( \pa_x V_0  
- \pa_x V_{0,\eta} \Bigr)
\Bigr\|_2^2,  \\
& \le 
H_Q(\tilde f_\ep) + 
 \ \Bigl\| \ep \, \pa_x V_\ep - \pa_x V_0 \cos
\frac t \ep \Bigr\|_2^2  +  \Bigl\| \pa_x V_0  
- \pa_x V_{0,\eta} \Bigr\|_2^2, \\
& \le 2 \,  \LL_\ep^{O}(t) + C \, \eta^2,
\end{align*}
thanks to the bound satisfied by $V_{0,\eta}$. Similarly, we can also prove that
$ \ds
\LL_\ep^{O}(t)  \le 2  \,\LL_{\ep,\eta}^{O}(t) + C \, \eta^2.$
All in all, we get that
\begin{align*}
\LL_\ep^{O}(t) & \le 2 \, \LL_{\ep,\eta}^{O}(t) + C \, \eta^2  \\
& \le 2 \, e^{2 \| \pa_{xx} V_0 
\|_{L^\infty} t}  \Bigl[ \LL_{\ep,\eta}^{O}(0)+  K'  \ep \Bigl( 1 +
\frac 1 \eta \Bigr) \bigl( 1 +
\EE_{\ep,0} + \QQ_{\ep,0} \bigr) \Bigr]  + C \, \eta^2 \\
&  \le 4\,  e^{2 \| \pa_{xx} V_0 
\|_{L^\infty} t}  \Bigl[ \LL_\ep^{O}(0)+  K'  \ep \Bigl( 1 +
\frac 1 \eta \Bigr) \bigl( 1 +
\EE_{\ep,0} + \QQ_{\ep,0} \bigr)  + C \, \eta^2 \Bigr],
\end{align*}
where we recall that the value of $C$ may change form line to line. 
The claimed result follows from the choice 
$ \eta = \ep^{\frac13} \bigl( 1 +
\EE_{\ep,0} + \QQ_{\ep,0} \bigr)^{\frac13}$.
  
\subsection{Some analogies.}

We conclude this section with a short digression about the analogies between the
quasineutral limit and two classical singular limits in hydrodynamics.

The first one is the so-called hydrostatic approximation of the Euler equation.
This limit turns out to be false in general due to the existence
of instabilities for the unscaled system (see for instance \cite{Br99}). As for
the quasineutral limit, it is important to consider data satisfying a
stability condition, namely the Rayleigh condition (a kind of monotonicity
condition; we refer to \cite{Br99} for details). There are similarities between
our approach and the proof of derivation that Brenier gave in \cite{Br03}. His proof
relies also on some modulated energy method. As for the quasineutral limit, the
first proof of this result was due to Grenier \cite{Gr99-2}; the techniques he
used are related to those suggested in \cite{Gr99-1} (see also Masmoudi-Wong
\cite{MW}).

There are also analogies with the derivation of the Prandtl equation in the
inviscid limit of the Navier-Stokes equations and with its ill-posedness properties; we refer to
\cite{OS,SamCaf,Gr,GVD,GVN,GN,MW2,GVM}. It would be very interesting to further investigate
these.

\section{Locally symmetric solutions to~\eqref{kineuler-intro} 
are homogeneous.}
\label{sec-EQ}

In this section, we prove the following 

\begin{prop}
\label{prop-EQ}
 Let $f$ be a weak solution to the quasineutral equation~\eqref{kineuler-intro}
satisfying the following hypothesess. The electric field $E = - \partial_x V$ belongs to
${L^\infty_t L^1_x}$ and there exists a $C^1$
function $ \bar v(t,x)$  such that  
\begin{itemize}
 \item[i)] for all $t,x$, $v \mapsto f(t,x,v)$ is increasing for $v < \bar 
v(t,x)$
and decreasing for $v > \bar v(t,x)$, and so has a maximum at $v=\bar 
v(t,x)$,
  \item[ii)] for all $t,x,v$, $f(t,x,2 \bar v(t,x) - v) = f(t,x,v)$.
\end{itemize}
Then there exist a constant (in time and
position) $\bar v$ and a profile $\varphi : \R^- \rightarrow \R^+$,
 nondecreasing and satisfying $\int_{\R^+} \varphi(u) \frac{du}{\sqrt{u}}= 1$,
such 
that for all $t\geq 0, x \in \T, v \in \R$,
 \begin{equation}
 \label{goal-EQ}
f(t,x,v)= \varphi\left(-\frac{|v - \bar v|^2}{2}\right).
\end{equation}
\end{prop}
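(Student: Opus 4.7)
The plan is to show, in four cleanly separated steps, that (i) $\bar v$ is necessarily constant, (ii) $f$ is then stationary and satisfies a reduced drift equation in $(x,u)$ where $u=-v^2/2$, (iii) this forces $f$ to be a function of the particle energy $-v^2/2-V(x)$, and (iv) the quasineutrality constraint $\rho\equiv 1$ forbids $V$ from depending on $x$.

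First I would pin down $\bar v$. The symmetry condition $f(t,x,2\bar v(t,x)-v)=f(t,x,v)$ together with the quasineutral constraint $\int f\,dv=1$ gives, after the change of variables $w=v-\bar v(t,x)$,
\begin{equation*}
j(t,x):=\int v\,f\,dv=\bar v(t,x)\int f\,dv+\int w\,f(t,x,\bar v+w)\,dw=\bar v(t,x),
\end{equation*}
since the last integrand is odd in $w$. Now the formal derivation already given at the beginning of the Introduction (integrate~\eqref{kineuler-intro} against $1$ and $v$, then average over $\T$) applied rigorously using the assumption $E\in L^\infty_t L^1_x$ shows that $j$ is constant in both $t$ and $x$. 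Hence $\bar v$ is a constant, and by the Galilean change of variables $(\tilde t,\tilde x,\tilde v)=(t,x-\bar v t,v-\bar v)$ I may assume $\bar v=0$.

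Next, with $f$ even in $v$, I compare the equation~\eqref{kineuler-intro} at $(t,x,v)$ with the equation obtained after the substitution $v\mapsto -v$ (using $f(t,x,-v)=f(t,x,v)$ and hence $(\partial_v f)(t,x,-v)=-\partial_v f(t,x,v)$). Adding and subtracting yields the two separate identities
\begin{equation*}
\partial_t f=0,\qquad v\,\partial_x f-\partial_x V\,\partial_v f=0.
\end{equation*}
Writing $f(t,x,v)=\varphi_{t,x}(-v^2/2)$, the first identity says $\varphi_{t,x}=\varphi_x$ is independent of $t$; substituting $f=\varphi_x(-v^2/2)$ in the second and simplifying $v$ gives the transport equation
\begin{equation*}
\partial_x\varphi_x(u)+\partial_x V(t,x)\,\varphi'_x(u)=0\qquad\text{for all }u\le 0.
\end{equation*}
Because $\varphi_x$ is increasing and nontrivial (otherwise $f$ would be constant in $v$, contradicting $\int f\,dv=1$), there exist $x_0,u_0$ with $\varphi'_{x_0}(u_0)\neq 0$, and at such a point $\partial_x V(t,x_0)$ is forced to be $t$-independent. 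Repeating this pointwise in $x$ shows that $\partial_x V$ depends only on $x$, so $V=V(x)$ up to an additive function of $t$ that plays no role. The displayed equation is then a first-order linear PDE in $(x,u)$ with characteristics $u-V(x)=\mathrm{const}$, and so there exists a nondecreasing function $F:\R\to\R^+$ with
\begin{equation*}
f(t,x,v)=F\!\left(-\tfrac{v^2}{2}-V(x)\right).
\end{equation*}

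Finally I enforce quasineutrality. Set $g(\lambda):=\int F(-v^2/2-\lambda)\,dv$, so that $g(V(x))=1$ for every $x\in\T$. Since $F$ is nondecreasing, $g'(\lambda)=-\int F'(-v^2/2-\lambda)\,dv\le 0$ with equality forcing $F'(-v^2/2-\lambda)=0$ for a.e. $v$. Assume for contradiction that $V$ is not constant; then $V(\T)\supset[V_-,V_+]$ with $V_-<V_+$, and on that interval $g\equiv 1$, hence $g'\equiv 0$. Taking $\lambda$ slightly above $V_-$ and letting it decrease shows $F'\equiv 0$ almost everywhere on $(-\infty,-V_-)$, so $F$ equals some constant $c$ on that half-line; but then $g(V_-)=\int c\,dv\in\{0,+\infty\}$, contradicting $g(V_-)=1$. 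Thus $V$ is constant, and writing $\varphi(u):=F(u-V)$ yields $f(t,x,v)=\varphi(-v^2/2)$, which upon undoing the Galilean shift gives~\eqref{goal-EQ}. The main delicate point will be the first step, justifying $\bar v=j$ and the constancy of $j$ rigorously for weak solutions under the stated regularity hypothesis on $E$; the remaining steps are then essentially algebraic.
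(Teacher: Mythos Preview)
Your proof is correct and takes a genuinely different, more direct route than the paper.

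The key difference is in the first step. You observe that the symmetry hypothesis together with $\rho=1$ gives $j(t,x)=\bar v(t,x)$, and then invoke the structural fact (derived in the introduction) that for any solution of the quasineutral Vlasov equation~\eqref{kineuler-intro} the current $j$ is constant in both $t$ and $x$. This immediately yields $\bar v=\text{const}$, after which the even/odd decomposition at $\pm v$ cleanly splits the equation into $\partial_t f=0$ and $v\,\partial_x f-\partial_x V\,\partial_v f=0$, and the rest is essentially characteristics plus a monotonicity argument on $g(\lambda)=\int F(-v^2/2-\lambda)\,dv$.

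The paper, by contrast, does \emph{not} use the global constancy of $j$ up front. It differentiates the symmetry relation in $t,x,v$, subtracts the Vlasov equation at $(t,x,2\bar v-v)$ from the one at $(t,x,v)$, and obtains at each frozen time an auxiliary Hamiltonian system with effective force $\tilde E:=E-\partial_t\bar v-\bar v\,\partial_x\bar v$. Only after showing (via $\rho=1$ and monotonicity) that the associated potential $\tilde V$ vanishes does one get $f(t,x,v)=g(t,|v-\bar v|^2/2)$; plugging back into the equation then yields $\partial_x\bar v=0$, and finally conservation of total momentum gives $\bar v=\text{const}$.

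Your argument is shorter and more transparent precisely because it front-loads the use of the quasineutral structure ($j$ constant). The paper's route is more ``local'': it extracts everything from the symmetry and the equation pointwise, only appealing to a conserved quantity at the very end. Both share the same delicate point you flag---making the moment computations rigorous for weak solutions under $E\in L^\infty_t L^1_x$---and both are somewhat formal in their handling of derivatives of $f$ and $\bar v$.
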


Similar ``rigidity'' properties (in a different context) were also studied by 
Ben Abdallah and Dolbeault \cite[Section
2.3]{BAD03}. 

\begin{proof}[Proof of Proposition~\ref{prop-EQ}]

During this proof we shall use the following notation : when $f$ (or one of its
derivatives) stands without reference to the variables, this
means $f= f(t,x,v)$. Otherwise the variables are explicitly written. By
instance, we will write the point $ii)$ as $f(t,x,2 \bar v(t,x) - v) = f$. For 
the function $\bar v$ there is no possible ambiguity since the variables will 
always  be $(t,x)$.

\bigskip

{\sl  Step 1. Some remarkable identities.}\\
We start form the equality $f(t,x,2 \bar v - v) = f$, and differentiate
it in $v$,  $t$ and $x$ in order to get the following identities:
\begin{equation*}
\left\{
\begin{aligned}
 &\partial_v f(t,x,2 \bar v - v)  =  - \partial_v f, \\
 &\partial_t f(t,x,2 \bar v - v)  =  \pa_t f  - 2\, \pa_v f(t,x,2 \bar v 
- v) \, \partial_t \bar v =  \partial_t f  + 2\, \pa_v f \, \pa_t
\bar v, \\
 &\partial_x f(t,x,2 \bar v - v)  =  \partial_x f  - 2\,  \pa_v f(t,x,2 \bar v 
- v) \, \partial_x \bar v =  \partial_x f  + 2 \, \partial_v f \, \pa_x
\bar v.
\end{aligned}
\right.
\end{equation*}
Using this in the equation~\eqref{kineuler-intro} written at the point 
$(t,x,2\bar v -v)$, that is
$$ \partial_t f (t,x,2\bar v -v) + (2 \bar v -v) \, \partial_x f(t,x,2\bar v 
-v) +
E \, \partial_v f(t,x,2\bar v -v) = 0,$$
we get
$$
\partial_t f + (2\bar v -v)  \partial_x f +
\left[ 2 \, \partial_t \bar v + 2(2 \bar v -v)  \partial_x
\bar v - E \right]\partial_v f = 0 \,.
$$
Subtracting this equation to the original Vlasov equation \eqref{kineuler-intro} 
 (and
dividing by $2$) we get
$$
(v - \bar v) \partial_x f + \left[E - \partial_t \bar v - (2  \bar v - v)
\partial_x \bar v  \right]\partial_v f = 0 \,.
$$
Since the time does not appear in the equation, we may work for a fixed $t$. 

\bigskip
{\sl  Step 2. An ODE system at frozen $t$.}\\
The previous equation means that at the frozen time $t$, the distribution $f(t,
\cdot,
\cdot)$ is constant along the trajectories of the system of ODEs
$$ \begin{cases}
\dds X = \Xi - \bar v(t,X), \\
\dds \Xi = E(t,X) - \partial_t \bar v(t,X) - (2\bar v(t,X) - \Xi) \partial_x 
\bar
v(t,X).
\end{cases}$$ 
Using the new variables $X,W:= \Xi - \bar v(t,X)$, we get the
system
$$ \begin{cases}
\dds X = W, \\
\dds W = E(t,X) - \partial_t \bar v(t,X) - \bar v(t,X) \partial_x \bar v(t,X)
=: \tilde E(t,X).
\end{cases} $$ 
We remark that $\bar v$ appears with the directional derivative $D_t =
\partial_t + \bar v \partial_x$.

We now prove that $\tilde E$ is the gradient (with respect to $x$) of some 
potential $\tilde V$. Since we work in $1D$, it is sufficient to prove that the 
average of $\tilde E$ on $\T$ is equal to $0$.
Since $E=-\partial_x V$, we already know that $\int_{\T} E \, dx =0$. We also 
have
$$ 
\int_{\T} \bar v(t,x) \partial_x \bar v(t,x) \,dx  = \int_{\T} 
\partial_x \left( \frac{\bar v(t,x)^2}2 \right) \,dx = 0  \,.
$$
For the $\partial_t \bar v $ term, remark that from the  symmetry assumption
$ii)$ and $\rho = 1$, we have  
$$\int_{\T} \bar v (t,x) \,dx = \int_{\T} v f \,dxdv =
P,$$
where $P$ is the total momentum which is preserved by the equation 
\eqref{kineuler-intro}. Therefore, the integral $\int_{\T} \partial_t \bar
v(t,x) \,dx = \frac{dP}{dt}$ also vanishes. 

We remark that according to~\cite{Hau2D} and since $ \tilde 
E= - \pa_x \tilde V \in L^1$, the measure preserving flow
associated to that system of ODEs is uniquely defined,  and allows to 
construct the solutions of the associated transport equation. 

Consequently, in the $(X,W)$ coordinates, the trajectories are the curves 
$\tilde V(X) +
\frac{W^2}2 = Cst$. If $\bar v$ is $C^1$ and {$E \in L^1_x$}, then the 
potential $\tilde{V}$
is continuous. Since it is defined up to a constant, we assume that its minimum
value is $0$ and denote by $x_0$ a point where $\tilde V (x_0) = 0$. Using the  
assumptions $i)$ and $ii)$, we can define a function $g: \R^+ \to \R^+$, which 
is nonincreasing
on $\R^+$ and such that for all $w \in \R$,
$$
g\left(\frac{w^2}2\right) = f(t,x_0,\bar v(t,x_0) + w).
$$
Let $(x,w) \in \T \times \R$. Since the integral curve of $(x,w)$ crosses
the line $\{x = x_0 \}$, precisely at the point $(x_0,w_0)$, where 
$\frac{w_0^2}{2}= \tilde{V}(x) +\frac{w^2}{2}$,   we get that 
\begin{equation} \label{eq:cons0}
f(t,x,\bar v + w) = g\left(\frac{w^2}2 + \tilde V(x)
\right).
\end{equation}

{\sl  Step 3. Using the $\rho=1$ constraint.}

Now, by \eqref{kineuler-intro},  since for all $x$, we have $\rho(t,x)=1$, we 
should have
$$\int  f(t,x,v)  \,dv = 2
\int_0^{+\infty} g\left(\frac{w^2}2 + \tilde V(x) \right) \,dw =1.
$$
This implies that $\tilde V(x)=0$ for all $x$. Indeed,  assume by contradiction that there exist $x$ 
 such that $ \tilde V(x) > \tilde V(x_0)=0$. Then, the following holds
$$
\int_0^{+\infty} \left[g\left(\frac{w^2}2 \right)  
-g\left(\frac{w^2}2 + \tilde V(x) \right)  \right] \,dw = 0.
$$
Since $g(\cdot)$ is nonincreasing, we deduce that for all $w \in \R^+$,
$$
g\left(\frac{w^2}2 \right)  -g\left(\frac{w^2}2 + \tilde 
V(x) \right)=0
$$
But since $g$ is integrable with the weight $\frac{1}{\sqrt{u}}$, $g$ cannot 
be constant and there is $z \in \R^+$ such that 
$$
g\left(\frac{z^2}2 + \tilde V(x) \right)  <  g\left(\frac{z^2}2 \right).
$$
This is a contradiction; we deduce that $\tilde V =0$. This implies that 
$\tilde E = 0$ and thus we get the remarkable identity for the 
electric field
\begin{equation}
\label{expressionE}
E(t,x) = \partial_t \bar v(t,x) + \bar v(t,x) \partial_x \bar v(t,x).
\end{equation}
The ODE for $(X,W)$ is now trivial, and we get a distribution
$f(t,\cdot,\cdot)$ depending only on $t$ and $|w|$, i.e.
$$ f(t,x,\bar v(t,x) + w) = g\left(t,\frac{w^2}{2}\right)$$
or equivalently
\begin{equation}
\label{expressionf}
f(t,x,v) = g\left(t, \frac{|v - \bar v(t,x)|^2}{2}\right) \,.
\end{equation}

\bigskip
{\sl  Step 4. Consequences in the original Vlasov equation 
\eqref{kineuler-intro}.}\\
Inserting \eqref{expressionf} in \eqref{kineuler-intro}, it comes
$$
 \partial_t g\left(t, \frac{|v - \bar v(t,x)|^2}{2}\right)  + 
 \bigl[ E - \partial_t 
\bar v  - v \, \partial_x \bar v \,\bigr]  
\bigl(v - \bar v(t,x) \bigr) \partial_u 
g\left(t, \frac{|v - \bar v(t,x)|^2}{2}\right)  = 0.
$$
where $\partial_u g$ denotes the derivative with respect to the second variable of $g$. Using \eqref{expressionE}, we deduce
$$
 \partial_t g\left(t, \frac{|v - \bar v(t,x)|^2}{2}\right)  -  |\overline v -v 
|^2 \partial_x \bar v \,
 \partial_u g\left(t, \frac{|v - \bar v(t,x)|^2}{2}\right)  =  0,
 $$
 from which we can finally write
\begin{equation}
\label{eq:g}
\partial_t g -  2 \,u \,\partial_x \bar v \,
 \partial_u g  =  0.
\end{equation}
We can now integrate this equation against the measure $\frac{du}{\sqrt{u}}$,
which yields (after an integration by parts)
$$
\frac{d}{dt} \int g \frac{du}{\sqrt{u}}+  \partial_x \bar v \int  g \frac{du}{\sqrt{u}}=0.
$$
Recall that by \eqref{expressionf}, since $\rho=1$, we have $\int g \frac{du}{\sqrt{u}}=1$. We therefore deduce that for all $t,x$,
$$\partial_x \bar v = 0.$$
The equation \eqref{eq:g} also becomes $\partial_t g=0$ so that
we can rewrite $f$ as
$$f(t,x,v) = g\left( \frac{|v - \bar v(t)|^2}{2}\right).$$ 
But in this case $\bar v (t)  = \int f \,dxdv =P$ which is preserved by the
equation and therefore is a constant. We finally get
$$ f(t,x,v) =g\left(\frac{|v - \bar v|^2}{2}\right)$$
and set $\varphi(\cdot) = g(-\cdot)$, which concludes the proof of  \eqref{goal-EQ}.
\end{proof}

\section{Construction of BGK waves: Proof of Theorem \ref{thm:1}} \label{sec:proof}
\label{sec-sta1}

In this section, we give a proof of Theorem \ref{thm:1}. We will restrict ourselves for simplicity to the case where $f^-_0 =0$, but the general case may be handled exactly in the same way.
In our model, the Hamiltonian (or energy)
is given by 
\[
E(x,v) = \frac{v^2}2 + V(x),
\]
and a solution of \eqref{eq:vla1D} is constant on the trajectories of the associated Hamiltonian
system, which are the connected components of $\{ (x,v)  |
E(x,v) = h\}$, except in the case where the potential remains constant on
a whole interval. 

We consider potentials $V$ reaching their maximum at $x=0$ and $x=1$, i.e.
\begin{equation} \label{condV}
V(0) =V(1)=0, \qquad \sup_{x \in [0,1]} V(x) \le  0.
\end{equation}
There exists at least one solution satisfying the above condition : the
homogeneous equilibrium 
\begin{equation} \label{homeq}
f(x,v)  = 
\begin{cases}
f_0^+(v)  & \text{if }  v \ge 0, \\
0 & \text{else},
\end{cases}
\end{equation}
 together with the constant potential $V=0$. 
Under the condition \eqref{condV}, if $\frac{v^2}2 + V(x) \ge 0$ and $v >0$,
then the level line passing through $(x,v)$ crosses the incoming boundary $\{ 0 \}
\times \R^+$ at $(0, \sqrt{v^2 + 2 V(x) })$. Therefore, the value of $f$ at
that point is given by
$$
f(x,v) = f_0^+ \bigl( \sqrt{v^2 + 2V(x) } \bigr), \qquad
\text{if } \quad
v  \ge \sqrt{ -2 V(x) }.
$$
If $v <0$ and $\frac{v^2}2 + V(x) \ge 0$, the level line crosses the incoming
boundary $\{ 1 \} \times \R^-$, and this leads to
$$
f(x,v) = 0, \qquad \text{if } \quad v  \le - \sqrt{-2 V(x) }.
$$
In between, for $|v| \le \sqrt{-2 V(x)}$, the particles are ``trapped'' in the
sense that they do not have
a sufficient energy to reach one of the boundary, and as a consequence, their
density is not fixed by the boundary condition. In that region, we will assume
that the density of $f$ is constant on the level lines of $E$, even if they are
not connected. But this is not a restriction since
 there is   only one density profile for the trapped particles
that leads to a solution of~\eqref{eq:vla1D} when
$V(x)$ is known, as we shall see later. In order to be consistent with
the previous discussion, we will use the notation
$$
f(x,v) = f_T \bigl(\sqrt{- v^2 - 2V(x) } \bigr),
\qquad
\text{if } \quad
|v|  < \sqrt{-2 V(x) }.
$$
The unknown function $f_T$ is defined on the interval $[0,\sqrt{-2 V_{min}}]$,
and the subscript $T$ stands for ``trapped''.
With these notation, the neutrality condition $ 1 = \int f \,dv $ now reads
$$
1  =  2 \int_0^{\sqrt{-2 V(x)}}  f_T  \bigl(\sqrt{ - v^2 - 2V(x) }
\bigr) \,dv   + \int_{\sqrt{-2 V(x)}}^{+\infty} 
f_0^+ \bigl( \sqrt{v^2 + 2V(x) } \bigr) \,dv.
$$
After a change of variable, it may be rewritten
$$
1 =  2 \int_0^{\sqrt{-2 V(x)}}  f_T (u) \frac{u\,du}{\sqrt{ - u^2 - 2V(x) }}
+ \int_0^{+\infty} f_0^+ ( u ) \frac{u\,du}{\sqrt{u^2 - 2V(x) }}.
$$
In order to get a solution, we only need to ensure that this condition is satisfied
for any $x \in [0,1]$. This is precisely stated in the following lemma.
\begin{lem}
\label{lem:1}
Assume that $f_0^+ \in  L^1$ satifies $\int_0^{+\infty} f_0^+(v) \,dv =1$, and that there exists a  
non-negative mesurable function $f_T : (0,+\infty) \rightarrow
\R^+$ such that
\begin{equation} \label{condfT}
\forall\,   r >0, \quad 
 2 \int_0^r  f_T (u) \frac{u\,du}{\sqrt{ r^2 - u^2 }} = 1 
 - \int_0^{+\infty} f_0^+ ( u ) \frac{u\,du}{\sqrt{ r^2 +  u^2  }}.
\end{equation}
Then, for any continuous potential $V : [0,1] \rightarrow \R^-$ satisfying $V(0)=V(1)=0$, the function $f \in L^1$ defined by
\begin{equation}\label{def:solgene}
 f(x,v) =
 \begin{cases}
  f_0^+ \bigl( \sqrt{v^2 + 2V(x) } \bigr) &
  \text{if} \quad v  \ge \sqrt{ -2 V(x) }, \\
  0 & \text{if} \quad v  \le - \sqrt{-2 V(x) }, \\
  f_T \bigl(- \sqrt{- v^2 - 2V(x) } \bigr) &
\text{if} \quad |v|  < \sqrt{-2 V(x) },
 \end{cases}
\end{equation}
is a solution of~\eqref{eq:vla1D}. 
\end{lem}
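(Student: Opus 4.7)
\emph{Plan.} The lemma requires verifying two things: the pointwise-in-$x$ neutrality $\int f(x,v)\,dv = 1$, and the stationary Vlasov equation $v\partial_x f - \partial_x V\,\partial_v f = 0$ in distributions. Both rest on the same structural observation: on each of the three open regions making up the partition in \eqref{def:solgene}, $f$ is a function of the Hamiltonian $E(x,v) := v^2/2 + V(x)$, namely $f = f_0^+(\sqrt{2E})$ in the upper region $\{E\ge 0,\, v\ge 0\}$, $f\equiv 0$ in the lower region $\{E\ge 0,\, v\le 0\}$, and $f = f_T(\sqrt{-2E})$ in the trapped region $\{E<0\}$. The two separatrices $\{v = \pm\sqrt{-2V(x)}\}$ lie on the level set $\{E=0\}$ and are themselves orbits of the Hamiltonian vector field $X := v\partial_x - \partial_x V\,\partial_v$.

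For the neutrality, I would fix $x\in[0,1]$, set $r := \sqrt{-2V(x)}\ge 0$, and split $\int f(x,v)\,dv$ into three pieces. The lower piece contributes $0$. On the upper piece, the change of variable $u = \sqrt{v^2+2V(x)}$ gives $u\,du = v\,dv$ and turns the integral into $\int_0^{+\infty} f_0^+(u)\,u\,du/\sqrt{u^2+r^2}$. On the trapped piece, using the symmetry $v\mapsto -v$ and $u = \sqrt{r^2-v^2}$ yields $2\int_0^r f_T(u)\,u\,du/\sqrt{r^2-u^2}$. Summing the three and invoking hypothesis \eqref{condfT} with this $r$ then gives $\int f(x,v)\,dv = 1$.

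For the Vlasov equation, the formal computation is immediate: for any smooth $g$, the function $g(E)$ satisfies $v\partial_x g(E) - \partial_x V\,\partial_v g(E) = g'(E)[v\partial_x V - \partial_x V\cdot v] = 0$. Thus the equation holds classically in the interior of each of the three regions, and one only needs to control the jumps of $f$ across the separatrices. Writing the equation in divergence form $\partial_x(vf) - \partial_v(\partial_x V\,f) = 0$ and summing the contributions of the three regions via the divergence theorem, each boundary integral carries a factor $X\cdot n$ with $n$ the outward unit normal to a level curve of $E$; since $E$ is a first integral of $X$, this factor vanishes and all jump contributions cancel. For merely continuous $V$, I would regularize by $V_\eta := V*\rho_\eta$, construct $f_\eta$ by \eqref{def:solgene} with $V_\eta$ in place of $V$, verify the equation classically for each $\eta>0$, and pass to the limit using $V_\eta\to V$ uniformly on $[0,1]$ and $f_\eta\to f$ in $L^1$ by dominated convergence (since $f_0^+,f_T\in L^1$ and the separatrices move continuously with $V_\eta$).

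The main obstacle is precisely this last passage to the limit: the product $\partial_x V_\eta\,\partial_v f_\eta$ does not converge directly since $\partial_x V_\eta\to\partial_x V$ only in $\mathcal{D}'$. The fix is to test against $\varphi\in C^\infty_c((0,1)\times\R)$ and rewrite the weak formulation as $\int f_\eta[v\partial_x\varphi - \partial_x V_\eta\,\partial_v\varphi]\,dxdv = 0$, then integrate the second term by parts in $x$ to transfer $\partial_x$ off $V_\eta$; what remains is linear in $V_\eta$ and in $f_\eta$ paired with smooth compactly supported objects, so the uniform convergence of $V_\eta$ and the $L^1$ convergence of $f_\eta$ suffice to pass to the limit and recover the distributional stationary Vlasov equation for the continuous potential $V$.
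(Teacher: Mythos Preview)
Your verification of the neutrality constraint $\int f(x,v)\,dv=1$ is correct and is exactly what the paper does: split according to the three regions, change variables $u=\sqrt{v^2+2V(x)}$ (respectively $u=\sqrt{-v^2-2V(x)}$), and invoke \eqref{condfT} with $r=\sqrt{-2V(x)}$.

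For the Vlasov equation the paper takes a shorter and more direct route than you do: it performs the change of variables $(x,v)\mapsto\bigl(x,\,v^2+2V(x)\bigr)$ \emph{directly} in the weak form $\int f\,[v\partial_x\varphi-\partial_x V\,\partial_v\varphi]\,dxdv$, region by region, without any regularization of $V$. After that change of variables the integrand in each region becomes $F(e)\,\partial_x\bigl[\varphi(x,v(x,e))\bigr]$ (with $F$ the relevant profile $f_0^+(\sqrt{2e})$, $0$, or $f_T(\sqrt{-2e})$, and $v(x,e)$ the appropriate branch of $\pm\sqrt{2(e-V(x))}$), and the inner $x$-integral vanishes because $\varphi$ has compact support in $(0,1)$. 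Your separate discussion of jump contributions across the separatrices is correct but unnecessary in this setup: the change of variables is global on each region and the level set $\{e=0\}$ has measure zero.

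Your regularization strategy is reasonable in spirit but the key step has a gap. You write: ``integrate the second term by parts in $x$ to transfer $\partial_x$ off $V_\eta$; what remains is linear in $V_\eta$ and in $f_\eta$ paired with smooth compactly supported objects''. That is not the case. Integration by parts gives
\[
\int f_\eta\,\partial_x V_\eta\,\partial_v\varphi\,dxdv
\;=\;
-\int V_\eta\,\partial_x f_\eta\,\partial_v\varphi\,dxdv
\;-\;\int V_\eta\,f_\eta\,\partial^2_{xv}\varphi\,dxdv,
\]
and the first term on the right involves $\partial_x f_\eta$, which is only a distribution; you cannot pass to the limit $\eta\to 0$ there using only $f_\eta\to f$ in $L^1$ and $V_\eta\to V$ in $C^0$. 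The paper's manoeuvre avoids this precisely by \emph{not} splitting $v\partial_x\varphi-\partial_x V\,\partial_v\varphi$ into two pieces: in the new variables the whole combination is the total $x$-derivative of $\varphi(x,v(x,e))$, so one never has to control $\partial_x V$ and $f$ separately. (For merely continuous $V$ there is, admittedly, a residual subtlety about the very meaning of the weak formulation that the paper also leaves implicit; but your proposed fix does not resolve it either.)
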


\begin{proof}[Proof of Lemma \ref{lem:1}]
 The proof is straightforward. The condition~\eqref{condfT} ensures that
$\rho(x)=1$, and in particular it implies that~\eqref{def:solgene} defines a function $f  \in L^1$.  The fact that the function $f$  solves~\eqref{eq:vla1D}
in the sense of distributions follows since $f$ is a function of the
energy $\frac{v^2}2 + V(x)$. It can be checked using some smooth test function
$\varphi$, and the change of variables $(x,v) \mapsto \bigl( x, v^2 + 2 V(x) 
\bigr)$ in several regions.
\end{proof}

So in order to construct solutions to~\eqref{eq:vla1D}, it suffices to
find a function $f_T$ satisfying~\eqref{condfT}. This is
done in the following proposition.
\begin{prop} \label{prop:exisfT}
Assume that $f_0^+ \in L^1$ with $\int f^+_0 =1$. Then the 
function $f_T$, defined on $(0,+\infty)$ as follows
\begin{equation} \label{def:fT}
f_T(u) := \frac1 \pi \int_0^\infty f_0^+(v) \frac{u\,v\,dv}{(u^2 + 
v^2)^{\frac32}},
\end{equation}
satisfies~\eqref{condfT} for all $r >0$.  And for any $\bar r >0$,
it is the unique function that satisfies~\eqref{condfT} for all $r \in [0, 
\bar r]$.
Moreover,  if in addition $f_0^+$ is 
continuous at $0$, then $\lim_{u \rightarrow 0} f_T(u) = f_0^+(0)$.
\end{prop}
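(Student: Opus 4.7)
The key observation is that~\eqref{condfT} is an Abel integral equation in disguise. Under the substitution $s = u^2$, $t = r^2$ and with $\psi(s) := f_T(\sqrt{s})$, the identity rewrites as
\[ \int_0^t \frac{\psi(s)}{\sqrt{t-s}}\, ds \;=\; G(t) \;:=\; 1 - \int_0^\infty \frac{v\, f_0^+(v)}{\sqrt{t+v^2}}\, dv, \]
and the mass normalisation $\int_0^\infty f_0^+ = 1$ forces $G(0) = 0$. The classical Abel inversion formula $\psi(t) = \pi^{-1} \frac{d}{dt}\int_0^t G(s)(t-s)^{-1/2}\, ds$ (equivalently $\pi^{-1}\int_0^t G'(s)(t-s)^{-1/2}\, ds$ when $G(0)=0$) then determines $\psi$, and hence $f_T$, uniquely on any bounded sub-interval. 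This handles the uniqueness claim of the proposition.

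For existence I would avoid invoking Abel inversion abstractly and instead verify by direct substitution that $f_T$ defined in~\eqref{def:fT} satisfies~\eqref{condfT}. Plugging in and applying Fubini's theorem (legitimate by nonnegativity of the integrand), one reduces to computing, for each fixed $v>0$, an inner integral of the form $\int_0^r P(u)(u^2+v^2)^{-\alpha}(r^2-u^2)^{-1/2}\, du$ with $P$ a simple power of $u$ and $\alpha \in \{1,3/2\}$. The substitution $u = r\sin\theta$ reduces this to a trigonometric integral of the type $\int_0^{\pi/2}(a+b\sin^2\theta)^{-1}\, d\theta = \pi/(2\sqrt{a(a+b)})$, or to a closely related standard one obtained by differentiation in a parameter. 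Reassembling the double integral and using $\int f_0^+ = 1$ once more to absorb a constant term yields precisely the right-hand side of~\eqref{condfT}.

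For the continuity claim at $u = 0^+$, I would rewrite $f_T(u)$ as the convolution of $f_0^+$ (extended by zero to $v<0$) against a kernel that, after simplification of the explicit form, is an approximate identity as $u \to 0^+$; this kernel is essentially the Poisson kernel $\pi^{-1} u/(u^2+v^2)$ of the upper half-plane, whose standard boundary-trace property gives $f_T(u) \to f_0^+(0)$ whenever $f_0^+$ is continuous at $0$. The main obstacle is technical rather than conceptual: one must justify both the Fubini swap in the existence step and the differentiation underlying the Abel inversion under the very weak hypothesis $f_0^+ \in L^1$. I would handle this by first truncating to $f_0^+\, \mathbf{1}_{[0,N]}$, proving the identity for the truncated data where all manipulations are manifestly legitimate, and then passing to the limit $N \to \infty$ via dominated convergence, exploiting the explicit pointwise bounds on the kernel uniformly on $r \in [0,\bar r]$.
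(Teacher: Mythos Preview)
Your uniqueness argument via Abel inversion is exactly what the paper does in its Step~2, and the overall architecture matches the paper's. The gap is in your existence step. Inserting~\eqref{def:fT} into the left-hand side of~\eqref{condfT} and swapping the integrals leaves you with the inner integral
\[
J(r,v)=\int_0^r \frac{u^2\,du}{(u^2+v^2)^{3/2}\sqrt{r^2-u^2}}
\;\stackrel{u=r\sin\theta}{=}\;
\int_0^{\pi/2}\frac{r^2\sin^2\theta\,d\theta}{(v^2+r^2\sin^2\theta)^{3/2}}.
\]
This is \emph{not} of the form $\int_0^{\pi/2}(a+b\sin^2\theta)^{-1}\,d\theta$, and parameter-differentiation of that identity produces only integer powers $(a+b\sin^2\theta)^{-2},(a+b\sin^2\theta)^{-3},\ldots$, never the half-integer $-3/2$. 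Writing $r^2\sin^2\theta=(v^2+r^2\sin^2\theta)-v^2$ splits $J$ into $\int(v^2+r^2\sin^2\theta)^{-1/2}\,d\theta$ minus $v^2\int(v^2+r^2\sin^2\theta)^{-3/2}\,d\theta$, each of which is a complete elliptic integral. So the ``standard trigonometric integral'' you invoke is not available here, and the direct-substitution route does not close as described.

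The paper avoids this by doing the Fubini swap in the Abel-inverted representation $f_T(u)=\pi^{-1}\int_0^u g'(s)(u^2-s^2)^{-1/2}\,ds$ rather than in the closed form~\eqref{def:fT}. Plugged into the left side of~\eqref{condfT}, the inner $u$-integral becomes
\[
\int_s^r\frac{u\,du}{\sqrt{(r^2-u^2)(u^2-s^2)}}=\frac{\pi}{2},
\]
which has matching square-root singularities at both endpoints and is genuinely elementary (substitute $w=u^2$). Only afterwards does the paper compute the explicit kernel, via the separate one-variable identity for $\int_0^1[(1-s)(s+x)^3]^{-1/2}\,ds$. Your proposal merges these two steps into one, and the merged integral is precisely the one that turns elliptic.

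A related symptom: the kernel in~\eqref{def:fT} is $\pi^{-1}uv(u^2+v^2)^{-3/2}$, which has total $v$-mass $1/\pi$ and is \emph{not} the Poisson kernel $\pi^{-1}u(u^2+v^2)^{-1}$, so your approximate-identity argument for $u\to 0$ does not apply to it as written either. It is worth noting that if one follows the paper's own computation carefully (evaluating $\int_0^1[(1-s)(s+x)^3]^{-1/2}\,ds=2/\bigl((1+x)\sqrt{x}\bigr)$ rather than $2/(1+x)^{3/2}$), one actually lands on the Poisson kernel, for which both your trigonometric reduction (the $\alpha=1$ case) and your boundary-trace argument go through cleanly. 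So the discrepancy you are implicitly running into reflects a slip in the displayed formula rather than in your instincts.
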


The theorem~\ref{thm:1} is then a consequence of Lemma~\ref{lem:1} and
Proposition~\ref{prop:exisfT}.

\begin{proof}[Proof of Proposition~\ref{prop:exisfT}] \mbox{}

{\sl Step 1. $f_T$ is a solution of~\eqref{condfT}.}
First remark that the function $f_T$ defined 
by~\eqref{def:fT} may be rewritten
\begin{align*}
f_T(u) =  \frac1 {2\pi}\int_0^\infty f_0^+( u v) \frac{dv}{(1 + 
v^2)^{\frac32}}. 
\end{align*}
This allows to prove that $\lim_{u \rightarrow 0} f_T(u) = f_0^+(0)$ when $f_0^+$ is continuous at $0$.
Next, denote by $g$ the function defined on $\R^+$ by the r.h.s. 
of~\eqref{condfT}:
\begin{equation} \label{def:g}
g(r) := 1  - \int_0^{+\infty} f_0^+ ( u ) \frac{u\,du}{\sqrt{ r^2 +  u^2  }}.
\end{equation}
Its derivative is given for $r>0$ by
\begin{equation*}
g'(r) = r \int_0^\infty \frac{f_0^+(u) \,u\,du}{(r^2 + u^2)^{\frac32}}
= \int_0^\infty \frac{f_0^+(r u) \,u\,du}{(1 + u^2)^{\frac32}}.
\end{equation*}
Thus $g'$ is positive and thus in $L^1$, and  with the second expression, we see 
that $\lim_{r \rightarrow 0} g'(r) = f_0^+(0)$ if $f_0^+$ is continuous at $0$. 
The condition~\eqref{condfT} maybe rewritten
\begin{equation*} 
\forall\,   r \in [0, \bar r ], \quad 
\int_0^r  f_T (u) \frac{u\,du}{\sqrt{ r^2 - u^2 }} = \frac12 \,  g(r).
\end{equation*}
Next using that for all $a  > b \ge0$,
\begin{equation} \label{simpleeq}
\int_a^b \frac{u \,du}{\sqrt{(b^2-u^2)(u^2-a^2)}} = \frac\pi 2 ,
\end{equation}
we obtain that for all $r>0$
\begin{align*}
\int_0^r  \biggl( \frac1 \pi \int_0^u \frac{g'(s)\,ds}{\sqrt{u^2-s^2}}  \biggr) 
\frac{u\,du}{\sqrt{ r^2 - u^2 }} = \frac12 \int_0^r g'(s) \,ds = \frac12 \, g(r).
\end{align*}
This means that 
\begin{align*}
f_T(u) & :=  \frac1 \pi \int_0^u \frac{g'(s)\,ds}{\sqrt{u^2-s^2}} 
= 
\frac1\pi \int_0^u \int_0^\infty \frac{f_0^+(v) \, s\,v \,ds 
\,dv}{\sqrt{(u^2-s^2)(s^2 + v^2)^3}} \\
& = 
\frac1{2\pi} \int_0^1 \int_0^\infty  \frac{f_0^+( u v)  \,v  \,ds 
\,dv}{\sqrt{(1-s)(s + v ^2)^3}},
\end{align*}
is a solution to~\eqref{condfT}. The conclusion follows from the equality
$$
\int_0^1 \frac{\,ds}{\sqrt{(1-s)(s+x)^3}} = \biggl[ 
- \frac{2 \sqrt{1-s}}{(1+x)\sqrt{x+s}} 
\biggr]_0^1
= \frac 2 {(1+x)^{\frac 32}}.
$$

\bigskip
{\sl Step 2. Uniqueness.}
Assume that $g_T : [0, \bar r] \rightarrow \R$ is such that
$$
\forall r \in [0, \bar r] , \qquad
\int_0^r g_T(u) \frac{ u\,du}{\sqrt{r^2-u^2}} =0.
$$
Let $r_1 \in [0,r]$.
Then, multiplying the previous equation by $\frac r{\sqrt{r_1^2-r^2}}$, and 
integrating on the interval $[0,r_1]$ , and using~\eqref{simpleeq}, we get
$$
\frac \pi 2 \int_0^{r_1} g_T(u) u\,du =0.
$$
Since it holds for any $r_1 \in [0, \bar r]$, it implies that $g_T = 0$ on that 
interval.

\end{proof}

%
%
\section{The case of the Vlasov-Poisson equation for ions}
\label{sec:mod}
In the last part of this paper, we focus on the quasineutral limit of the Vlasov-Poisson equation for ions:
\begin{equation}
\label{quasi2}
\left\{
    \begin{array}{ll}
 \ds  \partial_t f_\epsilon + v\partial_x f_\epsilon - \partial_x V_\ep \partial_v f_\epsilon =0,  \\
\ds   \alpha V_\epsilon -\epsilon^2 \,\partial_x^2 V_\epsilon = \rho_\epsilon 
-1,
    \end{array}
  \right.
\end{equation}
where $\alpha >0$. We add an initial condition $f_{0,
\epsilon} \in  L^1$ such that $f_{0,\ep}\geq 0$,  $\int 
f_{0,\ep} dvdx =1$.  As already said in the
introduction, this allows to describe the dynamics of ions in a plasma, in a
background of  ``adiabatic'' electrons, i.e. electrons which  
instantaneously  reach a thermodynamic equilibrium.

\begin{rque}
In \eqref{quasi2}, there is a parameter $\alpha >0$, which comes
from the fact that this model is only a linearization of the ``physical''
equations, in which the density of electrons follows a Maxwell-Boltzmann law and
the Poisson equation thus reads:
$$
-\epsilon^2 \, \partial_x^2 V_\epsilon = \rho_\epsilon - e^{-\alpha V_\epsilon}.
$$
The linearization consists then in writing $e^{-\alpha V_\epsilon} \approx 1 - \alpha V_\eps$, which yields \eqref{quasi2}.
\end{rque}

The scaled physical energy of this system reads:
\begin{equation}
\label{energy21}
\mathcal{E}_\epsilon(t)= \frac{1}{2} \int f_\epsilon \vert v \vert^2 dv dx  +  
\frac{\alpha}{2}  \int V_\epsilon^2 dx + \frac{\epsilon^2}{2} \int \vert 
\partial_x V_\epsilon \vert^2 dx.
\end{equation}

Assume now to simplify that $\alpha=1$. We can proceed as in the introduction
and formally obtain in the limit $\eps\to 0$ the Vlasov-Dirac-Benney equation
\begin{equation}
\label{kinSW}
\left\{
    \begin{array}{ll}
  \partial_t f + v\partial_x f - \partial_x V \partial_v f =0,  \\
V=  \rho-1.\\
    \end{array}
  \right.
\end{equation}
 We observe that the energy associated to this system reads :
\begin{equation}
\label{energy22}
\mathcal{E}(t)= \frac{1}{2} \int f \vert v \vert^2 dv dx + \frac{1}{2}  \int \rho^2 dx.
\end{equation} 
\begin{rque} Note that this can be seen a kinetic version of the shallow water
system (or isentropic gas dynamics with $\gamma=2$). Indeed, for monokinetic
profiles, that is 
$$
f(t,x,v)=\rho(t,x) \mathbbm{\delta}_{v=u(t,x)},
$$
we get the one-dimensional shallow water system:
\begin{equation}
\label{SW}
\left\{
    \begin{array}{ll}
  \partial_t \rho + \partial_x (\rho u)=0,  \\
\partial_t u + u\partial_x u + \partial_x \rho=0 .\\
    \end{array}
  \right.
\end{equation}
As a matter of fact, the derivation of \eqref{SW} from \eqref{quasi2} for
monokinetic data was performed in \cite{HK}.
\end{rque}

We now explain how to adapt the results which have been proved in this
paper.

\subsection{The Penrose criterion.}

We shall start by explaining what is the right Penrose criterion in the context
of the Vlasov equation
\begin{equation}
\label{eq:VP1D-mod}
\left\{
    \begin{array}{ll}
 \ds  \partial_t f + v\partial_x f - \partial_x V \partial_v f =0,  \\
\ds   \alpha V -\partial_x^2 V = \rho -1.
    \end{array}
  \right.
\end{equation}
We define the ``$\alpha$-Penrose instability criterion'' as follows.
\begin{deft} 
\label{def:Penmod}
We say that an homogeneous profile $\mu(v)$, such that $\int \mu \, dv=1$, satisfies the $\alpha$-Penrose instability 
criterion if there exists a local minimum point $\bar v$ of $\mu$ such that the following inequality holds 
\begin{equation} \label{def:Pen-first-mod}
\int_{\R} \frac{ \mu(v) - \mu(\bar v)} {(v-\bar v)^2} \, dv >   \alpha.
\end{equation}
If the local minimum is flat, i.e. is reached on an interval $[\bar v_1, \bar v_2]$, then~\eqref{def:Pen} has to be satisfied for all $\bar v \in [\bar v_1, \bar v_2]$.  
\end{deft}
\begin{rque}
Note that if $\alpha=0$, we recover the same instability conditions of the introduction.
\end{rque}

Exactly as for the case $\alpha=0$, we may obtain the exact analogue of Proposition \ref{prop:Penlim}, which is a key point in Theorem \ref{thmGrenier-revisited-mod} below.

\subsection{Unstable Case.}
The instability result we are able to prove is the same as for $\alpha=0$. 
\begin{thm} 
\label{thmGrenier-revisited-mod}
Let $\mu(v)$ be a smooth positive profile satisfying the Penrose instability criterion 
of Definition~\ref{def:Penmod} and the $\delta$-condition of Definition~\eqref{cond-pos}.  For any $N>0$ and $s>0$, there exists a 
sequence of non-negative initial data $(f_{0,\eps})$ such that 
$$
\| f_{\eps,0}- \mu\|_{W^{s,1}_{x,v}} \leq \eps^N,
$$
and denoting by $(f_\eps)$ the sequence of solutions
to \eqref{quasi2} with initial data $(f_{0,\eps})$, the following holds:.

\begin{enumerate}[i)]
\item {\bf $L^1$ instability for the macroscopic observables:} the density $\rho_\ep := \int f_\eps \, dv$, and the electric field $E_\ep = - \pa_x V_\ep$. For all $\alpha\in [0,1)$, we have
\begin{equation}
\label{insta:macromod}
\liminf_{\eps \rightarrow 0} \sup_{t \in [0,\eps^\alpha]} \left\| \rho_\eps(t) - 1 \right\|_{L^1_{x}} >0,
\qquad
\liminf_{\eps \rightarrow 0} \sup_{t \in [0,\eps^\alpha]} {\eps}\left\| E_\eps \right\|_{L^1} >0.
\end{equation}

\item {\bf Full instability for the distribution function:} for any $r \in \Z$, we have
\begin{equation}
\label{insta:fullmod}
\liminf_{\eps \rightarrow 0} \sup_{t \in [0,\eps^\alpha]} \left\| f_\eps(t) - \mu \right\|_{W^{r,1}_{x,v}} >0
\end{equation}
\end{enumerate}
\end{thm}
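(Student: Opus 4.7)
The plan is to follow almost verbatim the proof of Theorem \ref{thmGrenier-revisited}, only modifying the spectral analysis to account for the screened Poisson equation $\alpha V_\ep - \ep^2 \partial_x^2 V_\ep = \rho_\ep - 1$.

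First, I would redo Sections \ref{sec:Penrose} and \ref{subsec:insta} for the linearized operator $L_\alpha$ on $\T_M \times \R$ with the modified Poisson law. A direct computation shows that the dispersion relation becomes
\begin{equation*}
D_\alpha(n,\lambda) = 1 - \frac{1}{\alpha + (2\pi n/M)^2} \int_\R \frac{\mu'(v)}{v - iM\lambda/(2\pi n)}\,dv,
\end{equation*}
so that $D_\alpha(n,\lambda)=0$ if and only if $G(iM\lambda/(2\pi n)) = \alpha + (2\pi n/M)^2$ with $G$ as in \eqref{def:G}. The eigenfunctions retain their form \eqref{eigen-form} and all of Proposition \ref{prop:Deg} transfers unchanged (the regularizing effect of $(\alpha-\pa_x^2)^{-1}$ is even stronger than that of $(-\pa_x^2)^{-1}$ at low frequencies). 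Applying Penrose's complex-analytic winding argument, the curve $G(\R)$ encircles a value $\alpha + (2\pi n/M)^2$ if and only if $\mu$ has a local minimum $\bar v$ with $G(\bar v) > \alpha + (2\pi n/M)^2$, which after integration by parts is precisely \eqref{def:Pen-first-mod} reinforced by the additive term $(2\pi n/M)^2$. Taking $M$ large enough (the analog of Proposition \ref{prop:Penlim}), the $\alpha$-Penrose criterion becomes sufficient, producing an unstable eigenvalue $\lambda_1$ with eigenfunction of the same form as before and an associated density whose real part does not vanish.

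Next, the rescaling $(t,x,v) \mapsto (t/\ep, x/\ep, v)$ with $\ep = 1/(kM)$, $k \in \N^*$, transforms the $\ep M$-periodic version of \eqref{quasi2} into the $\ep$-independent system
\begin{equation*}
\partial_T g + v \partial_X g - \partial_X \varphi\,\partial_v g = 0, \qquad \alpha\, \varphi - \partial_X^2 \varphi = \int g\,dv - 1,
\end{equation*}
posed on $\T_M$. The elliptic estimate $\|\pa_X^{s+1} \varphi\|_\infty \lesssim \|\pa_X^s h\|_1$ (replacing \eqref{estim:phi}) holds with an even better constant thanks to the coercivity of $\alpha - \pa_X^2$, so every ingredient of Grenier's nonlinear instability scheme from Section \ref{sec-insta} goes through line by line: build the formal series $g^N_{app} = \mu + \sum_{k=1}^N \delta^k h_k$ with $h_k$ solving the analog of \eqref{def:hk}, prove by recursion that $\|h_k(t)\|_{W^{N-k+1,1}} \leq C_k e^{k\,\Re\lambda_1 t}$, choose $N$ as in \eqref{def:N}, and derive the error bound $\|g(t) - g^N_{app}(t)\|_1 \lesssim [\delta e^{\Re\lambda_1 t}]^{N+1}$ on $[0,t_{\max}]$ with $t_{\max} = (\Re\lambda_1)^{-1}\log(\theta_{\max}/\delta)$. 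Since $\mu$ is positive and satisfies the $\delta$-condition, the explicit form of $h_1$ forces $|\delta h_1| \leq \mu$ for small $\delta$, so the symmetrized datum $\tilde g(0) = \mu + (\delta/2)(h_1 + \overline{h_1})$ is automatically nonnegative, bypassing the truncation argument of subsection \ref{subsec:delta'}.

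Undoing the rescaling via \eqref{W1p-scaling}-\eqref{W-rp-scaling} yields the density and distribution function bounds \eqref{insta:macromod}-\eqref{insta:fullmod} exactly as in Step 4 of subsection \ref{subsec:insta}. For the electric field estimate, note that the unstable mode lives at spatial frequency $\sim \ep^{-1}$, where the symbol of $(\alpha - \ep^2\pa_x^2)^{-1}$ is of order $1$, so the modified Poisson equation gives $\|E_\ep\|_{L^1} \gtrsim \ep^{-1}\|\rho_\ep - 1\|_{W^{-1,1}}$ on this mode, and the lower bound $\ep\|E_\ep\|_{L^1} \geq c > 0$ follows from the already established instability of $\rho_\ep$. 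The only step that requires genuine verification (as opposed to pure transcription) is the analog of Proposition \ref{prop:Penlim}: one must check that the winding / intermediate value arguments produce an unstable eigenvalue in a large-$M$ regime relative to the new threshold $\alpha + (2\pi/M)^2$. This is routine but is the only place where the proof of Theorem \ref{thmGrenier-revisited-mod} genuinely differs from that of Theorem \ref{thmGrenier-revisited}.
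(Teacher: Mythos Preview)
Your proposal is correct and follows exactly the approach the paper itself takes: the paper's entire proof reads ``The same proof as that of Theorem~\ref{thmGrenier-revisited} holds, \emph{mutatis mutandis}: we only have to switch the Poisson equations,'' together with the remark that the analogue of Proposition~\ref{prop:Penlim} carries over to the $\alpha$-Penrose criterion. You have simply fleshed out the details of this transcription (the modified dispersion relation, the fact that the rescaling still yields an $\ep$-independent system, the persistence of the elliptic estimate, and the electric-field bound at frequency $\sim\ep^{-1}$), all of which are correct.
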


The same proof as that of Theorem \ref{thmGrenier-revisited} holds, \emph{mutatis
mutandis}: we only have to  switch the Poisson equations.

\subsection{Stable Case.}

The following holds for any value of $\alpha$, and therefore we consider here for simplicity that $\alpha=1$.
We restrict ourselves only on the well-prepared case.
Our stability theorem for \eqref{quasi2} goes as follows:

\begin{thm}
\label{thm-mod}
Let $\mu$ be a $S$-stable stationary solution to \eqref{kinSW} of the 
form given in~\eqref{condphi}. Assume that there exists $\eta>0$, such that  $\mu$ satisfies 
\begin{equation}
\label{assumption-mu-mod}
\int \mu(v) (1+ v^{2 + \eta})\,dv<+\infty.
\end{equation}
For all $\epsilon >0$, let $(f_\epsilon,V_\epsilon)$ be the global weak solution in the sense of Arsenev
to \eqref{quasi2}, with initial datum $f_{0,\epsilon}$ 
and define the 
``modulated energy''
\begin{equation} \label{def:Lep-mod}
\LL_\ep[f_\ep] := 
H_Q(f_\ep)  + \frac {\ep^2} 2 \int ( \partial_x V_\epsilon )^2 dx + \frac {1} 2 \int V_\epsilon^2 dx  .
\end{equation}
Then, $\LL_\ep$ is a Lyapunov functional in the sense that
$$
\forall t \in \R^+, \qquad \LL_\ep[f_\ep(t)] = \LL_\ep[f_{0,\ep}].
$$
\end{thm}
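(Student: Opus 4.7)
\textbf{Proof plan for Theorem~\ref{thm-mod}.}

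The strategy is to adapt the reformulation used in the proof of Theorem~\ref{thm}: rewrite $\LL_\ep$ as a sum of quantities that are preserved (or dissipated, for weak solutions) by the flow of~\eqref{quasi2}. The only structural change in passing from~\eqref{quasi} to~\eqref{quasi2} is the replacement of the Poisson equation by $V_\ep - \ep^2 \pa_x^2 V_\ep = \rho_\ep-1$ and, correspondingly, the appearance of an extra term $\frac12\int V_\ep^2\,dx$ in the physical energy $\EE_\ep$ defined in~\eqref{energy21}. The extra term $\frac12\int V_\ep^2\,dx$ added to $\LL_\ep$ in~\eqref{def:Lep-mod} is precisely what is needed to compensate this modification.

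Concretely, I would first expand $H_Q(f_\ep)$ according to~\eqref{HQintro}, splitting into the four integrals $\int Q(f_\ep)$, $\int Q(\mu)$, $\int Q'(\mu)f_\ep$ and $\int Q'(\mu)\mu$. Point~$i)$ of Proposition~\ref{prop:HQ} ensures (thanks to the moment assumption~\eqref{assumption-mu-mod}) that the last three integrals are finite, and two of them are time-independent constants. For the remaining term, I use the crucial identity $Q'(\mu(v))=-|v-\bar v|^2/2$ (see~\eqref{eq:Qmu}), which gives
\begin{equation*}
-\int Q'(\mu)\,f_\ep\,dxdv \;=\; \frac12\int|v-\bar v|^2 f_\ep\,dxdv \;=\; \frac12\int v^2 f_\ep\,dxdv - \bar v\, P_\ep(t) + \frac{\bar v^2}{2},
\end{equation*}
where $P_\ep(t):=\int v\,f_\ep\,dxdv$ is the total momentum and we used $\int f_\ep\,dxdv=1$. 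Gathering terms, this leads to
\begin{equation*}
\LL_\ep[f_\ep(t)] \;=\; \int Q(f_\ep(t))\,dxdv + \EE_\ep[f_\ep(t)] - \bar v\, P_\ep(t) + C_\mu,
\end{equation*}
where $C_\mu$ is a finite constant depending only on $\mu$.

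It remains to check that each of the three time-dependent pieces on the right-hand side is conserved along strong solutions of~\eqref{quasi2} (and non-increasing for Arsenev weak solutions). The Casimir $\int Q(f_\ep)\,dxdv$ is preserved because $Q\circ f_\ep$ satisfies the same free transport equation as $f_\ep$. For the physical energy, a direct computation, integrating $v\partial_t f_\ep$ against $v$ and combining with the time derivative of the two potential terms, reduces (after integration by parts in $x$) to the identity $\int\partial_t V_\ep\,(\rho_\ep-1)\,dx=\int\partial_t V_\ep\,(V_\ep-\ep^2\partial_x^2 V_\ep)\,dx=\frac{d}{dt}\bigl[\tfrac12\int V_\ep^2+\tfrac{\ep^2}{2}\int(\partial_x V_\ep)^2\bigr]$, so that $\tfrac{d}{dt}\EE_\ep=0$. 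For the momentum, integration by parts in $v$ gives $\tfrac{d}{dt}P_\ep = -\int \partial_x V_\ep\,\rho_\ep\,dx$, and using $\rho_\ep-1=V_\ep-\ep^2\partial_x^2 V_\ep$ the right-hand side becomes $-\int\partial_x V_\ep(V_\ep-\ep^2\partial_x^2 V_\ep)\,dx$, which vanishes by periodicity since both factors are total $x$-derivatives.

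There is no real obstacle here: the only subtle point is verifying that the additional $\frac12\int V_\ep^2\,dx$ term in the modulated energy is exactly what balances the modification of the Poisson equation so that $\EE_\ep$ remains conserved. For the Arsenev weak solutions stated in the theorem, the argument gives a priori only $\leq$ at each step; equality is then a consequence of the one-dimensional setting via the $L^\infty$ bound on $\rho_\ep$ (cf.\ the discussion following~\eqref{energy}). The whole argument goes through verbatim for any $\alpha>0$ upon replacing $\frac12\int V_\ep^2$ by $\frac{\alpha}{2}\int V_\ep^2$ both in $\EE_\ep$ and in $\LL_\ep$.
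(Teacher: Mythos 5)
Your proposal is correct and follows exactly the route the paper intends: the paper omits the proof, stating only that one adapts the modulated energy to the energy \eqref{energy21} and repeats the argument of Theorem~\ref{thm}, which is precisely the decomposition $\LL_\ep = \int Q(f_\ep) + \EE_\ep - \bar v\,P_\ep + C_\mu$ into conserved quantities that you carry out, with the correct verification that the modified Poisson equation still yields conservation of $\EE_\ep$ and $P_\ep$. The only caveat is the equality-versus-inequality issue for Arsenev weak solutions, which is already present in the paper's own statement and which you flag appropriately.
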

We thus see that the only thing to do is to adapt the definition of the modulated energy \eqref{def:Lep-mod} according to the energy \eqref{energy21} of \eqref{quasi2}. Then, the proof is exactly the same as that of Theorem~\ref{thm}, and therefore we omit it.

\bigskip

The adaptation the stability result for ill-prepared initial 
data of Theorem~\ref{thm-IP} requires more than a simple rephrasing, and this does not fall within the scope of this paper.

\subsection{Construction of the associated BGK waves.}
We consider now the boundary value problem
\begin{equation} \label{eq:vla1D2}
\begin{cases}
\ds v  \, \partial_x f - \frac1 \alpha \partial_x \rho\,   \partial_v f = 0, \\
\ds \rho = \int f(x,v) \,dv, 
\end{cases} 
\end{equation}
on the space $\Omega = [0,1]\times \R$. The incoming boundary conditions are
given by~\eqref{eq:BC}.
This corresponds to the stationary equations associated to the
 Vlasov-Dirac-Benney equation~\eqref{quasi2}, with boundary
conditions. An adaptation of the proof of Theorem~\ref{thm:1}  leads to the
following theorem.
\begin{thm} \label{thm:2}
Assume that $f_0^\pm : \R^+ \rightarrow \R^+$  is  are nonnegative and measurable functions such that
$\int_0^{+\infty} \bigl( f_0^+(v) + f_0^-(-v) \bigr) \,dv =1$. Define $f_T$
on $(0,+\infty)$ by
\begin{equation}  \label{def:fTdouble2}
f_T(u) :=  - \frac {\alpha \,u }{2\pi }  + \frac1 \pi \int_0^\infty \bigl( f_0^+(v) + f^-_0(-v)\bigr) \frac{u\,v\,dv}{(u^2 + 
v^2)^{\frac32}},
\end{equation}
and denote $\bar u := \inf \{u>0, \; s.t.  \; f_T(u) <0  \}$, which is in any case bounded by $\sqrt{\frac 2 \alpha}$. Then,  for any continuous potential  with values in  $ Â¬â  \bigl[ - \frac{\bar u^2}2, 0 \bigr]$ satisfying $V(0)=V(1)=0$,
the function $f$ defined by~\eqref{def:fstat}
together with $V$ gives a solution of~\eqref{eq:vla1D2} in the sense of
distributions.
 Moreover, any solution with $V$ nonpositive and vanishing at the boundary 
is of the above form. 
\end{thm}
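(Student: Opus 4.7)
The plan is to mimic the proof of Theorem~\ref{thm:1}, with the only substantive change coming from the modified algebraic Poisson relation $\alpha V = \rho-1$ of the Vlasov--Dirac--Benney limit. Setting $V(x) := (\rho(x)-1)/\alpha$ so that $\partial_x V = \partial_x \rho/\alpha$, the stationary equation~\eqref{eq:vla1D2} reads $v \partial_x f - \partial_x V \partial_v f = 0$, and the conserved energy along characteristics is again $E(x,v) = v^2/2 + V(x)$. For any continuous potential $V\le 0$ with $V(0)=V(1)=0$, the same analysis of free-streaming versus trapped orbits as in the proof of Lemma~\ref{lem:1} shows that any solution with $f$ constant on connected components of energy level sets must take the form~\eqref{def:fstat}, with $f_T$ still to be determined from the consistency requirement.

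Next, I impose the constraint $\rho(x) = 1 + \alpha V(x)$. Setting $r := \sqrt{-2V(x)}$ and performing the same changes of variables as in Lemma~\ref{lem:1} yields the Abel-type equation
\[
2 \int_0^r f_T(u) \frac{u \, du}{\sqrt{r^2 - u^2}} \;=\; \tilde g(r),
\qquad
\tilde g(r) := 1 - \frac{\alpha r^2}{2} - \int_0^\infty \bigl(f_0^+(v) + f_0^-(-v)\bigr) \frac{v \, dv}{\sqrt{v^2 + r^2}}.
\]
Compared with~\eqref{condfT}, the only modification is the additional $-\alpha r^2/2$ in $\tilde g$, which encodes the new Poisson constraint. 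Since $\tilde g(0) = 0$, the Abel inversion performed in Step~1 of the proof of Proposition~\ref{prop:exisfT} applies verbatim and gives $f_T(u) = \pi^{-1}\int_0^u \tilde g'(s)/\sqrt{u^2-s^2}\,ds$. Decomposing $\tilde g'(s) = -\alpha s + g'_{\mathrm{old}}(s)$ with $g_{\mathrm{old}}$ as in Proposition~\ref{prop:exisfT}, one part of $f_T$ reproduces the formula of Theorem~\ref{thm:1}, while the new part reduces to a trivial primitive producing a term linear in $u$; adding them yields the explicit formula~\eqref{def:fTdouble2}.

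The main new obstacle, absent in Theorem~\ref{thm:1}, is that the linear correction is negative, so $f_T$ is no longer automatically nonnegative. Defining $\bar u := \inf\{u>0 : f_T(u)<0\}$ and restricting $V$ to take values in $[-\bar u^2/2,\,0]$ guarantees that the trapped region in~\eqref{def:fstat} only samples the admissible nonnegative portion of $f_T$, so that $f$ is nonnegative and belongs to $L^1$. For the quantitative bound $\bar u \le \sqrt{2/\alpha}$, I use the elementary inequality $v/(u^2+v^2)^{3/2} \le 1/u^2$ together with $\int (f_0^+ + f_0^-(-\cdot))\,dv = 1$ to control the positive part of~\eqref{def:fTdouble2} by $\mathrm{const}/(\pi u)$, and then compare with the linear negative contribution; the threshold at which the latter overtakes the former yields the claimed upper bound. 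Uniqueness of such solutions (with $V\le 0$ vanishing at the boundary) carries over unchanged from Step~2 of the proof of Proposition~\ref{prop:exisfT}, since that argument rests only on the injectivity of the Abel transform and is insensitive to the particular right-hand side.
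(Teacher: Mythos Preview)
Your proposal is correct and follows essentially the same route as the paper: reduce to the framework of Theorem~\ref{thm:1} via $V=(\rho-1)/\alpha$, replace $g$ by $g-\alpha r^2/2$ in the Abel equation, invert to obtain the extra linear term in $f_T$, and bound $\bar u$ using $(u^2+v^2)^{-3/2}\le u^{-2}v^{-1}$ together with $\int(f_0^+ + f_0^-(-\cdot))=1$. The paper's own argument is a terse sketch of precisely these steps, so your write-up is in fact more detailed than what appears there.
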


The bound on $\bar u$ comes from a straightforward a priori bound of the right 
hand side of~\eqref{def:fTdouble2}: use the elementary inequality 
$(u^2+v^2)^{-\frac 32} \le u^{-2} v^{-1}$ and $\int_0^{+\infty} \bigl( f_0^+(v) 
+ f_0^-(-v) \bigr) \,dv =1$.

When $f_0^+(\cdot) + f_0^-(-\cdot)$ is continuous at $0$, we have also that $\lim_{u \rightarrow 0 } f_T(u) = 
f_0^+(0)+ f_0^-(0)$, so that it is clear that $\bar u$ is strictly positive if  $f_0^+(0) + f_0^-(0)$ is.
If $f_T(0)=0$, then  $\bar u >0$  when $f_T'(u) >0$, that is
$$
\int_0^\infty \bigl( f_0^+(v) + f^-_0(-v)\bigr) \frac{dv}{  
v^2} > \frac \alpha 2.
$$

The proof of~\ref{thm:1} can be adapted without difficulty to this case, with a potential given by  $V
= \frac{\rho -1}\alpha$. For instance, the definition of $g$ in~\eqref{def:g} should be replaced by
\begin{equation*} 
g(r) := 1  - \alpha \frac{r^2}2 + \int_0^{+\infty} f_0^+ ( u ) \frac{u\,du}{\sqrt{ r^2 +  u^2  }},
\end{equation*}
and the additionnal term $- \alpha \frac{r^2}2$ leads after some straightforward computations to the additional term $- \frac {\alpha \,u }{2\pi } $ in~\eqref{def:fTdouble2}.


\end{document}